\newtheorem{theorem}{Theorem}[section]
\newtheorem{proposition}{Proposition}[section]
\newtheorem{lemma}{Lemma}[section]
\newtheorem{corollary}{Corollary}[section]
\newtheorem{remark}{Remark}[section]
\newcommand{\R}{\mathbb{R}}
\newcommand{\ity}{\infty}
\newcommand{\f}{\displaystyle\frac}
\begin{document}
\title[Semi-linear $\sigma$-evolution equations with double damping]{Applications of $L^p-L^q$ estimates for solutions to semi-linear $\sigma$-evolution equations with general double damping}

\subjclass{35A01, 35L30, 35L76}
\keywords{parabolic like damping, $\sigma$-evolution like damping, oscillating integrals, global (in time) existence}
\thanks{$^* $\textit{Corresponding author:} Tuan Anh Dao (anh.daotuan@hust.edu.vn)}

\maketitle
\centerline{\scshape Dinh Van Duong$^1$, Tuan Anh Dao$^{1,*}$}
\medskip
{\footnotesize
	\centerline{$^1$ Faculty of Mathematics and Informatics, Hanoi University of Science and Technology}
	\centerline{No.1 Dai Co Viet road, Hanoi, Vietnam}}

\begin{abstract}
In this paper, we would like to study the linear Cauchy problems for semi-linear $\sigma$-evolution models with mixing a parabolic like damping term corresponding to $\sigma_1 \in [0,\sigma/2)$ and a $\sigma$-evolution like damping corresponding to $\sigma_2 \in (\sigma/2,\sigma]$. The main goals are on the one hand to conclude some estimates for solutions and their derivatives in $L^q$ setting, with any $q\in[1,\ity]$, by developing the theory of modified Bessel functions effectively to control oscillating integrals appearing the solution representation formula in a competition between these two kinds of damping. On the other hand, we are going to prove the global (in time) existence of small data Sobolev solutions in the treatment of the corresponding semi-linear equations by applying $(L^{m}\cap L^{q})- L^{q}$ and $L^{q}- L^{q}$ estimates, with $q\in (1,\ity)$ and $m\in [1,q)$, from the linear models. Finally, some further generalizations will be discussed in the end of this paper. 
\end{abstract}

% \linenumbers
\tableofcontents

%====================================================================================
%=================================================================================={Introduction}	
\section{Introduction}
In this paper, let us consider the following Cauchy problem for semi-linear $\sigma$-evolution equations with double damping:
\begin{equation} \label{Main.Eq.1}
\begin{cases}
u_{tt}+ (-\Delta)^\sigma u+ \mu_1(-\Delta)^{\sigma_1} u_t+ \mu_2(-\Delta)^{\sigma_2} u_t= |u|^p, &\quad x\in \R^n,\, t \ge 0, \\
u(0,x)= u_0(x),\quad u_t(0,x)= u_1(x), &\quad x\in \R^n, \\
\end{cases}
\end{equation}
where $\sigma\ge 1$, $\sigma_1$, $\sigma_2$ are any constants being subject to $0< \sigma_1< \sigma/2< \sigma_2< \sigma$. Here $\mu_1,\mu_2 $ are positive constants and the parameter $p>1$ stands for power exponents of the nonlinear term. The homogeneous problem corresponding to \eqref{Main.Eq.1} is
\begin{align}
    \begin{cases}
         u_{tt}+ (-\Delta)^\sigma u+ \mu_1(-\Delta)^{\sigma_1} u_t+ \mu_2(-\Delta)^{\sigma_2} u_t = 0, &\quad x \in \R^n,\, t \ge 0,\\
       u(0,x)= u_0(x),\quad u_t(0,x)= u_1(x), &\quad x\in \R^n.  
    \end{cases}\label{Main.Eq.3}
\end{align}

Recently there are a number of papers (for example \cite{DaoReissig1}, \cite{DaoReissig2}) presenting the case $\mu_2 = 0$, that is, parabolic like semi-linear structurally damped $\sigma$-evolution models
\begin{align} \label{Eq4}
    \begin{cases}
        u_{tt} + (-\Delta)^{\sigma}u + \mu_1 (-\Delta)^{\sigma_1}u_t = F(u, u_t), &\quad x \in \R^n,\, t \ge 0,\\
        u(0,x) = u_0(x), \quad u_t(0,x) = u_1(x), &\quad x\in \R^n,
      \end{cases}
    \end{align}
with $\sigma_1 \in (0, \frac{\sigma}{2})$ and $F(u,u_t) = |u|^p$ or $|u_t|^p$. In \cite{DaoReissig1}, the authors divided the phase space into two parts: frequencies small enough and large enough to study the Fourier multiplier with fluctuations in representing the solution of the homogeneous Cauchy problem corresponds to (\ref{Eq4}). They applied the modified Bessel function theory combined with Fa\`{a} di Bruno's formula in the relations with the Fourier coefficients appearing in wave models, corresponding to the small frequencies and high frequencies. Therefore, having estimates $L^1$ for the oscillating integral is to conclude that the estimate $L^p- L^q$ does not necessarily lie on the conjugate for the solutions of the homogeneous Cauchy problem. Furthermore, the critical exponent for the existence of a global solution of the problem (\ref{Eq4}) has also been studied. Using the same tools as in \cite{DaoReissig1}, the authors of \cite{DaoReissig2} also found the $L^p-L^q$ estimate of the homogeneous Cauchy problem (\ref{Main.Eq.3}) and proved the result that a complete solution exists locally (over time) with the small data of problem (\ref{Main.Eq.1}) in the case $\mu_1 = 0$, that is, semi-linear models with $\sigma$-evolutionlike structural damping
\begin{align*}
     \begin{cases}
        u_{tt} + (-\Delta)^{\sigma}u + \mu_2 (-\Delta)^{\sigma_2}u_t = F(u, u_t), &\quad x \in \R^n,\, t \ge 0,\\
        u(0,x) = u_0(x), \quad u_t(0,x) = u_1(x), &\quad x\in \R^n,
      \end{cases}
\end{align*}
where $\sigma_2 \in (\frac{\sigma}{2}, \sigma)$ and $F(u,u_t) = |u|^p$ or $|u_t|^p$.

In \cite{DabbiccoEbert2017}, the authors have proven the blow up and lifespan results, which means that the nonlinear problem (\ref{Eq4}) will not has a global solution over time when the exponent $p$ has a value smaller than the critical value. Next for the case $\sigma_1 = 0, \sigma_2=\sigma =1$, that is, the wave equations with frictional and visco-elastic damping terms 
\begin{align*}
    \begin{cases}
        u_{tt} -\Delta u +u_t -\Delta u_t = |u|^p, &\quad x \in \R^n,\, t \ge 0,\\
        u(0,x) = u_0(x), \quad u_t(0,x) = u_1(x), &\quad x\in \R^n,
    \end{cases}
\end{align*}
people prove the global (in
time) existence of small data energy solutions and analyze the large time
behavior of these global solutions as well by mixing additional $L^m$ regularity for the data (see \cite{DaoMichihisa2020}). Moreover, issues related to the critical exponent of this problem have been studied in \cite{IkehataTakeda2017} and the authors in \cite{IkehataSawada2016} have described the asymptotic behavior of the problem in case the problem has global experience. In particular, in the case $\sigma = 1$, that is, the semilinear waves equations with two dissipative terms
\begin{align*}
    \begin{cases}
        u_{tt} -\Delta u +(-\Delta)^{\sigma_1} u_t + (-\Delta)^{\sigma_2} u_t = F(u,u_t), &\quad x \in \R^n,\, t \ge 0,\\
        u(0,x) = u_0(x),\quad u_t(0,x) = u_1(x), &\quad x\in \R^n,
    \end{cases}
\end{align*}
where  $0 \leq \sigma_1 < 1/2 < \sigma_2 \leq 1$ and $F(u,u_t) = |u|^p$ or $|u_t|^p$ , the authors in \cite{ChenDAbbiccoGirardi2022} used the Mikhlin–Hörmander multiplier
theorem and Hardy–Littlewood theorem for the Riesz potential to obtain $L^r-L^q$ estimates with $1 < r \leq q < \infty$ for the homogeneous Cauchy problem 
\begin{equation*}
    \begin{cases}
        u_{tt}-\Delta u +(-\Delta)^{\sigma_1}u_t + (-\Delta)^{\sigma_2} u_t = 0, &\quad x \in \R^n,\, t \ge 0,\\
        u(0,x) = u_0(x),\quad u_t(0,x) = u_1(x), &\quad x\in \R^n.
    \end{cases}
\end{equation*}
Then, they prove  global (in time) existence results and the blow-up results for the above problem.

The goal of this paper is to find an estimate of $L^q-L^q$ for the homogeneous problem (\ref{Main.Eq.3}) with $1 \leq q \leq \infty$ and by adding $L^m$ regularity to the data, we prove the existence of complete locality (over time) of the solution to nonlinear problems (\ref{Main.Eq.1}) with small data and make some generalizations for (\ref{Main.Eq.1}). To obtain an estimate of $L^p-L^q$ for the solution of the homogeneous problem (\ref{Main.Eq.3}), we need to obtain an estimate of $L^1$ for the oscillatory integrals using the theorems for the modified Bessel function. The obtained results show that the solution loses its decay for small times. To overcome this, we increased the regularity of the data to obtain a solution estimate that reduces to $0$ when $t \rightarrow 0^{+}$.

\textbf{This paper is organized as follows:} In Section \ref{Linear estimates} we find the $(L^m \cap L^q)-L^q$ estimates and $L^q-L^q$ estimates for the solution of the homogeneous Cauchy problem (\ref{Main.Eq.3}), where $1 \leq m < q \leq \infty$. Then, applying the derived estimates
and Banach’s fixed point theorem, we prove global (in time) existence results for the problem (\ref{Main.Eq.1}). Next, we prove the extension result of the problem (\ref{Main.Eq.1}) by proving the existence of a unique global solution over time with small data of the problem (\ref{Main.Eq.2}) in Section \ref{Sec.3}. Finally, Section \ref{Sec.4} is to devote to a investigation of several further
generalizations.\medskip
%...............................

\textbf{Notations}
\begin{itemize}[leftmargin=*]
\item We write $f\lesssim g$ when there exists a constant $C>0$ such that $f\le Cg$, and $f \sim g$ when $g\lesssim f\lesssim g$.
\item We denote $\widehat{w}(t,\xi):= \mathfrak{F}_{x\rightarrow \xi}\big(w(t,x)\big)$ as the Fourier transform with respect to the spatial variable of a function $w(t,x)$. Moreover, $\mathfrak{F}^{-1}$ represents the inverse Fourier transform.
\item As usual, $H^{a}_q$ and $\dot{H}^{a}_q$, with $a \ge 0$, denote Bessel and Riesz potential spaces based on $L^q$ spaces with $1 \leq q \leq \infty$. Here $\big<D\big>^{a}$ and $|D|^{a}$ stand for the pseudo-differential operators with symbols $\big<\xi\big>^{a}$ and $|\xi|^{a}$, respectively.
\item For any $\gamma \in \R$, we denote $[\gamma]^+:= \max\{\gamma,0\}$ as its positive part and $\lceil \gamma \rceil := \min\{k \in \mathbb{Z}: k \geq \gamma\}$.  
\item Finally, we introduce the space
$\frak{D}^{s}_{m,q}:= \big(L^m \cap H^s_q\big) \times \big(L^m\cap H^{[s-\sigma+\sigma_2]^{+}}_q\big)$ with the norm
$$\|(\phi_0,\phi_1)\|_{\frak{D}_{m,q}^{s}}:=\|\phi_0\|_{H^s_q}+ \|\phi_0\|_{L^m}+ \|\phi_1\|_{H^{[s-\sigma+\sigma_2]^{+}}_q}+ \|\phi_1\|_{L^m} \quad \text{ with } s \geq 0. $$
\end{itemize}
\medskip

\textbf{Main results}
\begin{theorem}\label{theorem1.0}
    Let $q \in (1, \infty)$ be a fixed constant, $m \in [1, q)$ and $0 \leq s < 2\sigma_2$. We assume that the exponent $p$ satisfies the conditions $p > 1+ \lceil [s-\sigma+\sigma_2]^{+} \rceil$ and
    \begin{equation}\label{condition1.0.1}
        p > 1 +\frac{4m(\sigma-\sigma_1)}{n-2m(\sigma-\sigma_1)} \text{ and } n > 2m(\sigma-\sigma_1).
    \end{equation}
    Moreover, we suppose the following conditions:
\begin{equation}\label{condition1.0.2}
    p \in \left[\frac{q}{m}, \infty\right) \text{ if } n \leq qs \text{ or } p \in \left[\frac{q}{m}, \frac{n -q[s-\sigma+\sigma_2]^{+}}{n-qs}\right] \text{ if } n \in \left(qs, \frac{q^2s-qm[s-\sigma+\sigma_2]^{+}}{q-m}\right].
\end{equation}
Then, there exists a constant $\epsilon > 0$ such that for any small data
\begin{equation*}
    (u_0, u_1) \in \frak{D}_{m,q}^s \text{ satisfying the assumption } \|(u_0,u_1)\|_{\frak{D}_{m,q}^s} \leq \epsilon,
\end{equation*}
we have a uniquely determined global (in time) small data energy solution
\begin{equation*}
    u \in C\left([0, \infty\right), H^{s}_q)
\end{equation*}
to (\ref{Main.Eq.1}). The following estimates hold:
\begin{align*}
    \|u(t,\cdot)\|_{L^q} &\lesssim (1+t)^{1-\frac{n}{2(\sigma-\sigma_1)}(\frac{1}{m}-\frac{1}{q})} \|(u_0,u_1)\|_{\frak{D_{m,q}^{s}}},\\
    \||D|^{s}  u(t,\cdot)\|_{L^q} &\lesssim (1+t)^{1-\frac{n}{2(\sigma-\sigma_1)}(\frac{1}{m}-\frac{1}{q})-\frac{s}{2(\sigma-\sigma_1)}} \|(u_0,u_1)\|_{\frak{D}_{m,q}^s}.
\end{align*}
\end{theorem}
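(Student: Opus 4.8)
The plan is to recast the Cauchy problem \eqref{Main.Eq.1} in its integral (Duhamel) form and to solve it by a contraction argument in a time-weighted solution space built from the linear decay estimates of Section \ref{Linear estimates}. Writing $u^{\mathrm{lin}}(t,x)=K_0(t,\cdot)*_{(x)}u_0+K_1(t,\cdot)*_{(x)}u_1$ for the solution of the homogeneous problem \eqref{Main.Eq.3}, where $K_0,K_1$ denote the two propagators, the mild solution of \eqref{Main.Eq.1} is a fixed point of
\begin{equation*}
N[u](t,x):=u^{\mathrm{lin}}(t,x)+\int_0^t K_1(t-\tau,\cdot)*_{(x)}|u(\tau,\cdot)|^p\,\rmd\tau .
\end{equation*}
I would work in the Banach space $X(t):=C\big([0,t],H^s_q\big)$ equipped with the weighted norm
\begin{equation*}
\|u\|_{X(t)}:=\sup_{0\le \tau\le t}\Big( (1+\tau)^{-1+\beta}\,\|u(\tau,\cdot)\|_{L^q}+(1+\tau)^{-1+\beta+\frac{s}{2(\sigma-\sigma_1)}}\,\||D|^{s}u(\tau,\cdot)\|_{L^q}\Big),
\end{equation*}
where $\beta:=\frac{n}{2(\sigma-\sigma_1)}\big(\frac1m-\frac1q\big)$, so that the two target estimates of the theorem are exactly the bound $\|u\|_{X(\infty)}\lesssim \|(u_0,u_1)\|_{\frak{D}_{m,q}^{s}}$.

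The crux is to establish the two inequalities
\begin{equation*}
\|N[u]\|_{X(t)}\lesssim \|(u_0,u_1)\|_{\frak{D}_{m,q}^{s}}+\|u\|_{X(t)}^{p},\qquad \|N[u]-N[v]\|_{X(t)}\lesssim \|u-v\|_{X(t)}\big(\|u\|_{X(t)}^{p-1}+\|v\|_{X(t)}^{p-1}\big),
\end{equation*}
from which Banach's fixed point theorem yields a unique global solution once $\|(u_0,u_1)\|_{\frak{D}_{m,q}^{s}}\le\epsilon$ is small. The linear part $u^{\mathrm{lin}}$ is controlled directly by the $(L^m\cap L^q)-L^q$ and $L^q-L^q$ estimates of Section \ref{Linear estimates}. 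For the Duhamel integral I would estimate $|u(\tau)|^p$ in the $L^m$ norm and in the Riesz/Bessel norms of orders $0$ and $[s-\sigma+\sigma_2]^{+}$ by the fractional Gagliardo--Nirenberg inequality: writing $\||u(\tau)|^p\|_{L^r}=\|u(\tau)\|_{L^{pr}}^p$ and interpolating between $L^q$ and $\dot H^{s}_q$ produces powers of $(1+\tau)$ governed by $\beta$, while the admissibility of the interpolation exponents is precisely what the hypotheses \eqref{condition1.0.1} and \eqref{condition1.0.2} guarantee; the condition $p>1+\lceil[s-\sigma+\sigma_2]^{+}\rceil$ is what makes $|u|^p$ regular enough for the fractional chain rule to apply at order $[s-\sigma+\sigma_2]^{+}$, the regularity level matching the $u_1$-component of $\frak{D}_{m,q}^{s}$.

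The main obstacle, and the reason the data must carry the extra $L^m$ regularity, is the behavior of the kernel $K_1(t-\tau)$ as $\tau\uparrow t$, where, as noted in the introduction, the linear estimates lose their decay for small times. I would split $\int_0^t=\int_0^{t/2}+\int_{t/2}^t$. On $[0,t/2]$ the argument $t-\tau$ is large, so the $(L^m\cap L^q)-L^q$ estimate supplies the genuine decay factor $(t-\tau)^{-\beta}$ while the nonlinear term contributes weights of $(1+\tau)^{p(1-\beta)}$ type; on $[t/2,t]$ the argument $t-\tau$ is small, so I would instead invoke the $L^q-L^q$ estimate, which carries no loss from passing to $L^m$, together with $(1+\tau)\sim(1+t)$ to extract the decay. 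Convergence of both pieces of the $\tau$-integral, uniformly in $t$, is exactly where the supercritical lower bound $p>1+\frac{4m(\sigma-\sigma_1)}{n-2m(\sigma-\sigma_1)}$ and the dimensional restriction $n>2m(\sigma-\sigma_1)$ are consumed, and the short-time singularity of $K_1$ on $[t/2,t]$ is integrable precisely because the higher Sobolev regularity $s$ forces the solution estimate to vanish as $t\to0^{+}$. The Lipschitz estimate follows the same template after the elementary bound $\big||u|^p-|v|^p\big|\lesssim |u-v|\,(|u|^{p-1}+|v|^{p-1})$ and H\"older's inequality, so the whole argument reduces to bookkeeping of the time weights once the two key inequalities above are secured.
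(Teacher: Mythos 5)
Your overall strategy coincides with the paper's: the same Duhamel operator, the same solution space $X(t)=C([0,t],H^s_q)$ equipped with exactly the weights $f_1,f_{2,s}$ of Section \ref{Sec.3}, the same two contraction inequalities \eqref{estimates3.1.1}--\eqref{estimates3.1.2}, and the same combination of $(L^m\cap L^q)-L^q$ and $L^q-L^q$ linear estimates with fractional Gagliardo--Nirenberg and the fractional chain rule at order $[s-\sigma+\sigma_2]^{+}$. The genuine problem is your treatment of the second Duhamel piece. You split at $\tau=t/2$ and assert that on $[t/2,t]$ ``the argument $t-\tau$ is small'', so that the loss-free small-time $L^q-L^q$ bounds apply there. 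For large $t$ this is false: on $[t/2,t]$ the quantity $t-\tau$ ranges over $[0,t/2]$. The $L^q-L^q$ estimates of Corollary \ref{corollary2.2} take their bounded form (factors $1$, $t^{1/2}$, $t-\tau$) only when $t-\tau\le 1$; for $t-\tau\ge 1$ the $K_1$-part carries the \emph{growing} factor $(1+t-\tau)^{1-\frac{s}{2(\sigma-\sigma_1)}}$. Invoking the small-time form on the whole of $[t/2,t]$ is an application of an estimate outside its range of validity, so the argument as written does not close.

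The gap is repairable, and this is precisely what the paper's choice of splitting point accomplishes: it splits at $\tau=[t-1]^{+}$ rather than $t/2$, so that $t-\tau\le 1$ on the second interval; there $(1+\tau)\sim(1+t)$ and the bounded small-time kernels give $\int_{[t-1]^{+}}^{t}(t-\tau)\,\||u(\tau,\cdot)|^p\|_{L^q}\,d\tau\lesssim (1+t)^{\gamma}\|u\|_{X(t)}^p$ with $\gamma=p-\frac{np}{2(\sigma-\sigma_1)}\big(\frac{1}{m}-\frac{1}{qp}\big)<-1-\beta$, where $\beta=\frac{n}{2(\sigma-\sigma_1)}\big(\frac{1}{m}-\frac{1}{q}\big)$ in your notation, which is more than enough. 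Alternatively you could keep $t/2$ but insert a further split $[t/2,t-1]\cup[t-1,t]$ and use the large-time $L^q-L^q$ form on the middle region; the extra factor $\int_{t/2}^{t-1}(1+t-\tau)\,d\tau\lesssim (1+t)^2$ is then absorbed because $\gamma+2<1-\beta$ — but this computation must actually be made; it is not automatic from ``$(1+\tau)\sim(1+t)$''. A second, smaller omission: for the Lipschitz estimate you rely only on the pointwise bound $\big||u|^p-|v|^p\big|\lesssim|u-v|\,(|u|^{p-1}+|v|^{p-1})$ and H\"older's inequality. That suffices for the $L^m$ and $L^q$ norms of the difference of nonlinearities, but not for its $\dot H^{[s-\sigma+\sigma_2]^{+}}_q$ norm (needed on the last interval whenever $s>\sigma-\sigma_2$), since fractional derivatives do not respect pointwise inequalities; the paper handles this via the integral representation $|u|^p-|v|^p=p\int_0^1(u-v)\,G\big(\omega u+(1-\omega)v\big)\,d\omega$ with $G(w)=w|w|^{p-2}$, combined with the fractional Leibniz rule, the fractional chain rule and Gagliardo--Nirenberg.
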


\begin{theorem}\label{theorem1.2}
     Let $q \in (1, \infty)$ be a fixed constant, $m \in [1, q)$. We assume that the exponent $p$ satisfies the conditions $p > 1+ \lceil 3\sigma_2-\sigma \rceil$ and
    \begin{equation}\label{condition1.2.1}
        p > 1 +\frac{4m(\sigma-\sigma_1)}{n-2m(\sigma-\sigma_1)} \text{ and } n > 2m(\sigma-\sigma_1).
    \end{equation}
    Moreover, we suppose the following conditions:
\begin{equation}\label{condition1.2.2}
    p \in \left[\frac{q}{m}, \infty\right) \text{ if } n \leq 2q\sigma_2 \text{ or } p \in \left[\frac{q}{m}, \frac{n-q(3\sigma_2-\sigma)}{n-2q\sigma_2}\right] \text{ if } n \in \left(2q\sigma_2, \frac{2q^2\sigma_2-qm(3\sigma_2-\sigma)}{q-m}\right].
\end{equation}
Then, there exists a constant $\epsilon > 0$ such that for any small data
\begin{equation*}
    (u_0, u_1) \in \frak{D}_{m,q}^{2\sigma_2} \text{ satisfying the assumption } \|(u_0,u_1)\|_{\frak{D}_{m,q}^{2\sigma_2}} \leq \epsilon,
\end{equation*}
we have a uniquely determined global (in time) small data energy solution
\begin{equation*}
    u \in C\left([0, \infty\right), H^{2\sigma_2}_q) \cap C^1\left([0, \infty\right), L^q)
\end{equation*}
to (\ref{Main.Eq.1}). The following estimates hold:
\begin{align*}
    \|u(t,\cdot)\|_{L^q} &\lesssim (1+t)^{1-\frac{n}{2(\sigma-\sigma_1)}(\frac{1}{m}-\frac{1}{q})} \|(u_0,u_1)\|_{\frak{D}_{m,q}^{2\sigma_2}},\\
    \||D|^{2\sigma_2}  u(t,\cdot)\|_{L^q} &\lesssim (1+t)^{1-\frac{n}{2(\sigma-\sigma_1)}(\frac{1}{m}-\frac{1}{q})-\frac{\sigma_2}{\sigma-\sigma_1}} \|(u_0,u_1)\|_{\frak{D}_{m,q}^{2\sigma_2}},\label{estimates1.2.2}\\
    \|u_t(t,\cdot)\|_{L^q} &\lesssim (1+t)^{1-\frac{n}{2(\sigma-\sigma_1)}(\frac{1}{m}-\frac{1}{q})-\frac{\sigma_1}{\sigma-\sigma_1}} \|(u_0,u_1)\|_{\frak{D}_{m,q}^{2\sigma_2}}.
\end{align*}
\end{theorem}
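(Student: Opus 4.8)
The plan is to realize the solution of \eqref{Main.Eq.1} via Duhamel's principle and then run a Banach fixed-point argument in a weighted solution space, reducing everything to the linear $(L^m\cap L^q)-L^q$ and $L^q-L^q$ estimates established in Section~\ref{Linear estimates}. Writing $E_0(t,\cdot)$ and $E_1(t,\cdot)$ for the two propagators of \eqref{Main.Eq.3}, the mild solution satisfies
\begin{equation*}
 u(t,x)= E_0(t,x)\ast_{(x)}u_0 + E_1(t,x)\ast_{(x)}u_1 + \int_0^t E_1(t-\tau,x)\ast_{(x)}|u(\tau,x)|^p\,\rmd\tau =: N[u](t,x).
\end{equation*}
Since the target decay rates are dictated by the three estimates in the statement, I would equip $X(t):= C\big([0,t],H^{2\sigma_2}_q\big)\cap C^1\big([0,t],L^q\big)$ with the norm obtained by dividing $\|u(\tau,\cdot)\|_{L^q}$, $\||D|^{2\sigma_2}u(\tau,\cdot)\|_{L^q}$ and $\|u_t(\tau,\cdot)\|_{L^q}$ by the respective weights $(1+\tau)^{1-\frac{n}{2(\sigma-\sigma_1)}(\frac1m-\frac1q)}$, $(1+\tau)^{1-\frac{n}{2(\sigma-\sigma_1)}(\frac1m-\frac1q)-\frac{\sigma_2}{\sigma-\sigma_1}}$ and $(1+\tau)^{1-\frac{n}{2(\sigma-\sigma_1)}(\frac1m-\frac1q)-\frac{\sigma_1}{\sigma-\sigma_1}}$ and then taking the supremum over $\tau\in[0,t]$. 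The linear part of $N[u]$ is controlled by $\|(u_0,u_1)\|_{\frak{D}^{2\sigma_2}_{m,q}}$ directly from the linear estimates, so the whole argument rests on estimating the Duhamel integral and its first two derivatives.

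For the nonlinear term I would first bound $|u(\tau)|^p$ in the norm of the data space $\frak{D}^{2\sigma_2}_{m,q}$ attached to $E_1$. The $L^m$ and $L^q$ parts reduce to $\|u(\tau)\|_{L^{pm}}^p$ and $\|u(\tau)\|_{L^{pq}}^p$, which I estimate by the fractional Gagliardo--Nirenberg inequality interpolating between $L^q$ and $\dot H^{2\sigma_2}_q$; the constraints $p\geq q/m$ and the upper bound in \eqref{condition1.2.2} are precisely the admissibility conditions forcing the interpolation parameters into $[0,1]$. The $\dot H^{3\sigma_2-\sigma}_q$ part of $\||u(\tau)|^p\|$ requires a fractional chain rule for $|u|^p$, and this is where the regularity hypothesis $p>1+\lceil 3\sigma_2-\sigma\rceil$ enters, guaranteeing that $|u|^p$ is smooth enough to carry $3\sigma_2-\sigma=[2\sigma_2-\sigma+\sigma_2]^{+}$ fractional derivatives. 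Feeding these bounds into the $X(t)$-norm produces an integrand whose $\tau$-decay is governed by the factor $(1+\tau)^{-\frac{n}{2(\sigma-\sigma_1)}(\frac1m-\frac1q)p+\cdots}$.

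Next I would split the Duhamel integral at $\tau=t/2$. On $[0,t/2]$ the factor $1+t-\tau\sim 1+t$ is large, so the $(L^m\cap L^q)-L^q$ estimates deliver the full decay, and the remaining $\tau$-integral converges exactly when $p>1+\frac{4m(\sigma-\sigma_1)}{n-2m(\sigma-\sigma_1)}$ together with $n>2m(\sigma-\sigma_1)$; this is the Fujita-type threshold in \eqref{condition1.2.1}. On $[t/2,t]$ the propagator $E_1(t-\tau)$ is evaluated at small times where it loses decay, so there one switches to the $L^q-L^q$ estimates: the resulting singular factor $(t-\tau)^{-\kappa}$ is integrable and, since $1+\tau\sim 1+t$ on this interval, it reproduces the claimed rate as well. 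The extra time derivative needed for the $u_t$-estimate is treated identically after replacing $E_1$ by $\partial_t E_1$ and invoking the corresponding derivative estimates from Section~\ref{Linear estimates}; at the endpoint regularity $s=2\sigma_2$ this is exactly what upgrades the solution to $C^1\big([0,\infty),L^q\big)$.

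Finally, the contraction follows from the elementary inequality $\big||u|^p-|v|^p\big|\lesssim |u-v|\big(|u|^{p-1}+|v|^{p-1}\big)$ combined with Hölder's inequality and the same Gagliardo--Nirenberg bounds, yielding $\|N[u]-N[v]\|_{X(t)}\lesssim \|u-v\|_{X(t)}\big(\|u\|_{X(t)}^{p-1}+\|v\|_{X(t)}^{p-1}\big)$; restricting to a ball of radius comparable to $\epsilon$ makes $N$ a self-map and a contraction, so Banach's fixed-point theorem produces the unique global solution. I expect the principal difficulty to be the endpoint analysis on $[t/2,t]$: matching the singular small-time behaviour of $\partial_t E_1$ at $s=2\sigma_2$ against the Gagliardo--Nirenberg exponents so that the integrability of $(t-\tau)^{-\kappa}$ and the sharp decay rate survive simultaneously, while keeping the loss factor $(1+t)^{1}$ consistent across all three components of the $X(t)$-norm.
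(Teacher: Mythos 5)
Your framework coincides with the paper's own proof: Duhamel representation, a Banach fixed-point argument in $X(t)=C([0,t],H^{2\sigma_2}_q)\cap C^1([0,t],L^q)$ normed by dividing out exactly the three target decay rates, $(L^m\cap L^q)$--$L^q$ estimates on the portion of the Duhamel integral away from $t$ and $L^q$--$L^q$ estimates near $t$, Gagliardo--Nirenberg for $\|u(\tau,\cdot)\|_{L^{mp}}$ and $\|u(\tau,\cdot)\|_{L^{qp}}$, and a fractional chain rule for $\||u(\tau,\cdot)|^p\|_{\dot{H}^{3\sigma_2-\sigma}_q}$. One technical discrepancy: the paper splits the Duhamel integral at $[t-1]^{+}$, not at $t/2$. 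With your split, the interval $[t/2,t]$ still contains \emph{large} values of $t-\tau$, so your claim that $E_1(t-\tau)$ is there ``evaluated at small times'' is not accurate; you must either split again at $t-1$ or invoke both branches of the piecewise $L^q$--$L^q$ bounds, including the growing factor $(1+t-\tau)^{1-\frac{s+2\sigma_1 k}{2(\sigma-\sigma_1)}}$ at large $t-\tau$. The exponent arithmetic does still close, because $0<\sigma_1<\sigma/2<\sigma_2<\sigma$ forces $\sigma_2<2(\sigma-\sigma_1)$ and $\sigma_1<2(\sigma-\sigma_1)$, and condition (\ref{condition1.2.1}) gives $p-\frac{np}{2(\sigma-\sigma_1)}\bigl(\frac{1}{m}-\frac{1}{qp}\bigr)<-1-\frac{n}{2(\sigma-\sigma_1)}\bigl(\frac{1}{m}-\frac{1}{q}\bigr)$, but as written this step is imprecise rather than proved.

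The genuine gap is in your contraction step. The Lipschitz bound in the $X(t)$-norm requires an estimate of $\bigl\||u(\tau,\cdot)|^p-|v(\tau,\cdot)|^p\bigr\|_{\dot{H}^{3\sigma_2-\sigma}_q}$, because the small-time $L^q$--$L^q$ estimates for $|D|^{2\sigma_2}(Nu-Nv)$ and $\partial_t(Nu-Nv)$ demand the source term in $H^{3\sigma_2-\sigma}_q$. The elementary inequality $\bigl||u|^p-|v|^p\bigr|\lesssim|u-v|\bigl(|u|^{p-1}+|v|^{p-1}\bigr)$ combined with H\"older controls only the $L^m$ and $L^q$ norms of the difference; it gives no control whatsoever on fractional derivatives, since $|D|^{3\sigma_2-\sigma}$ does not respect pointwise domination. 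The paper instead uses the integral representation $|u|^p-|v|^p=p\int_0^1(u-v)\,G\bigl(\omega u+(1-\omega)v\bigr)\,d\omega$ with $G(w)=w|w|^{p-2}$, applies the fractional Leibniz rule to this product, the fractional chain rule (this is where $p>1+\lceil 3\sigma_2-\sigma\rceil$ is really consumed) to $|D|^{3\sigma_2-\sigma}G(\cdot)$, and then Gagliardo--Nirenberg on each factor. Relatedly, your treatment of $\||u(\tau,\cdot)|^p\|_{\dot{H}^{3\sigma_2-\sigma}_q}$ by the chain rule alone only covers the regime $3\sigma_2-\sigma\le\frac{n}{q}$; when $3\sigma_2-\sigma>\frac{n}{q}$ the paper must switch to the fractional powers rule together with the fractional Sobolev embedding, choosing $s^{*}=\frac{n}{q}-\epsilon$. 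Without these ingredients the self-map and contraction properties of $N$ are not established, so the fixed-point argument does not close as proposed.
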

\begin{theorem}\label{theorem1.3}
     Let $q \in (1, \infty)$ be a fixed constant, $m \in [1, q)$ and $s> 2\sigma_2$. We assume that the exponent $p$ satisfies the conditions $p > 1 +\lceil s -\sigma +\sigma_2 \rceil$ and 
    \begin{equation}\label{condition1.3.1}
        p > 1 +\frac{\max\{ms, 4m(\sigma-\sigma_1)\}}{n-2m(\sigma-\sigma_1)} \text{ and } n > 2m(\sigma-\sigma_1).
    \end{equation}
    Moreover, we suppose the following conditions:
\begin{equation}\label{condition1.3.2}
    p \in \left[\frac{q}{m}, \infty\right) \text{ if } n \leq qs \text{ or }  p \in \left[\frac{q}{m}, \frac{n -q(s-\sigma+\sigma_2)}{n-qs}\right] \text{ if } n \in \left(qs, \frac{q^2s-qm(s-\sigma+\sigma_2)}{q-m}\right].
\end{equation}
Then, there exists a constant $\epsilon > 0$ such that for any small data
\begin{equation*}
    (u_0, u_1) \in \frak{D}_{m,q}^s \text{ satisfying the assumption } \|(u_0,u_1)\|_{\frak{D}_{m,q}^s} \leq \epsilon,
\end{equation*}
we have a uniquely determined global (in time) small data energy solution
\begin{equation*}
    u \in C\left([0, \infty\right), H^{s}_q) \cap C^1\left([0, \infty\right), H^{s-2\sigma_2}_q)
\end{equation*}
to (\ref{Main.Eq.1}). The following estimates hold:
\begin{align*}
    \|u(t,\cdot)\|_{L^q} &\lesssim (1+t)^{1-\frac{n}{2(\sigma-\sigma_1)}(\frac{1}{m}-\frac{1}{q})} \|(u_0,u_1)\|_{\frak{D}_{m,q}^s},\\
    \||D|^{s}  u(t,\cdot)\|_{L^q} &\lesssim (1+t)^{1-\frac{n}{2(\sigma-\sigma_1)}(\frac{1}{m}-\frac{1}{q})-\frac{s}{2(\sigma-\sigma_1)}} \|(u_0,u_1)\|_{\frak{D}_{m,q}^s},\label{estimates1.3.2}\\
    \|u_t(t,\cdot)\|_{L^q} &\lesssim (1+t)^{1-\frac{n}{2(\sigma-\sigma_1)}(\frac{1}{m}-\frac{1}{q})-\frac{\sigma_1}{\sigma-\sigma_1}} \|(u_0,u_1)\|_{\frak{D}_{m,q}^s},\\
    \||D|^{s-2\sigma_2} u_t(t,\cdot)\|_{L^q} &\lesssim (1+t)^{1-\frac{n}{2(\sigma-\sigma_1)}(\frac{1}{m}-\frac{1}{q})-\frac{s}{2(\sigma-\sigma_1)}+\frac{\sigma_2-\sigma_1}{\sigma-\sigma_1}} \|(u_0,u_1)\|_{\frak{D}_{m,q}^s},
\end{align*}
\end{theorem}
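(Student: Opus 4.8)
The plan is to construct the solution via Duhamel's principle and a contraction argument. Denote by $E_0(t)$ and $E_1(t)$ the solution operators of the homogeneous problem \eqref{Main.Eq.3}, so that its solution is $E_0(t)u_0+E_1(t)u_1$ and their $(L^m\cap L^q)-L^q$ and $L^q-L^q$ estimates are those derived in Section~\ref{Linear estimates}. Since $s>2\sigma_2>\sigma$, we have $[s-\sigma+\sigma_2]^+=s-\sigma+\sigma_2>0$, so the second datum lives in $H^{s-\sigma+\sigma_2}_q$. By Duhamel's principle, solving \eqref{Main.Eq.1} is equivalent to finding a fixed point of
\[
N[u](t,\cdot):=E_0(t)u_0+E_1(t)u_1+\int_0^t E_1(t-\tau)\,|u(\tau,\cdot)|^p\,\rmd\tau .
\]
I would work in the complete space $X(t)$ of functions in $C([0,t],H^s_q)\cap C^1([0,t],H^{s-2\sigma_2}_q)$, normed by the supremum over $\tau\in[0,t]$ of the four quantities $\|u(\tau,\cdot)\|_{L^q}$, $\||D|^s u(\tau,\cdot)\|_{L^q}$, $\|u_t(\tau,\cdot)\|_{L^q}$, $\||D|^{s-2\sigma_2}u_t(\tau,\cdot)\|_{L^q}$, each divided by the time-weight attached to it in the statement. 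Everything then reduces to the two inequalities
\[
\|N[u]\|_{X(t)}\lesssim \|(u_0,u_1)\|_{\frak{D}^s_{m,q}}+\|u\|_{X(t)}^p,\qquad
\|N[u]-N[v]\|_{X(t)}\lesssim \big(\|u\|_{X(t)}^{p-1}+\|v\|_{X(t)}^{p-1}\big)\|u-v\|_{X(t)},
\]
which for data below a threshold $\epsilon$ yield a unique fixed point, hence the asserted global solution and its estimates.

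The linear contribution $E_0(t)u_0+E_1(t)u_1$ is bounded in $\|\cdot\|_{X(t)}$ directly by the linear estimates, by the very choice of the weights. The substance is the Duhamel integral, for which I would feed into the linear estimates for $E_1(t-\tau)$ and $\partial_t E_1(t-\tau)$ the nonlinearity $|u(\tau,\cdot)|^p$ measured in the two scales those estimates require: the Lebesgue norms $\||u|^p\|_{L^m}$, $\||u|^p\|_{L^q}$ for the low-order components, and the Riesz-potential norms $\||D|^a|u|^p\|_{L^q}$ for $0<a\le s-\sigma+\sigma_2$ for the high-order ones. Writing $\||u|^p\|_{L^r}=\|u\|_{L^{pr}}^p$ and applying the Gagliardo--Nirenberg inequality, $\|u(\tau,\cdot)\|_{L^{pr}}$ interpolates between $\|u(\tau,\cdot)\|_{L^q}$ and $\||D|^s u(\tau,\cdot)\|_{L^q}$; the admissibility of the interpolation parameters is exactly what the range restrictions \eqref{condition1.3.2} and the lower bound $p>1+\frac{\max\{ms,4m(\sigma-\sigma_1)\}}{n-2m(\sigma-\sigma_1)}$ in \eqref{condition1.3.1} encode. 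For the fractional norms I would apply the fractional chain rule for the composition $v\mapsto|v|^p$; its hypotheses demand that $p$ exceed the differentiation order, and since the largest order needed equals the regularity $s-\sigma+\sigma_2$ of the second datum, this is precisely the role of $p>1+\lceil s-\sigma+\sigma_2\rceil$.

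Once these pointwise-in-$\tau$ bounds are inserted, each component of $N[u]$ is dominated by a scalar integral of the schematic form $\int_0^t w(t-\tau)\,(1+\tau)^{-p\theta}\,\rmd\tau$, where $w$ is the relevant propagator weight and $\theta$ the decay rate carried by $\|u\|_{X(t)}$. As announced in the introduction, the $L^q-L^q$ estimates lose their decay as $t-\tau\to 0^+$, so $w$ is singular near the diagonal and the integral is not absolutely convergent there without exploiting the regularity of the data. I would therefore split at $\tau=t/2$: on $[0,t/2]$ one has $t-\tau\sim t$ and uses the $(L^m\cap L^q)-L^q$ decay estimate, while on $[t/2,t]$ one has $t-\tau$ small and uses the $L^q-L^q$ estimate at the higher regularity. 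Checking in each regime that the exponent of $(1+t)$ produced by the integral coincides with the weight of the component is where the upper bounds on $p$ in \eqref{condition1.3.2} and the constraint $n>2m(\sigma-\sigma_1)$ are consumed; the same ceiling condition $p>1+\lceil s-\sigma+\sigma_2\rceil$ secures the differentiability of $|u|^p$ required for the $u_t$-components and for the membership $u\in C^1([0,\infty),H^{s-2\sigma_2}_q)$.

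The step I expect to be the main obstacle is the near-diagonal ($\tau\to t$) analysis of the two high-regularity components $\||D|^s u\|_{L^q}$ and $\||D|^{s-2\sigma_2}u_t\|_{L^q}$ in the present regime $s>2\sigma_2$: there the propagator carries its strongest derivative loss at exactly the moment we demand the most derivatives of $|u|^p$, and the fractional chain rule must return a bound of order $s-\sigma+\sigma_2$ that is simultaneously controlled by a power of $\|u\|_{X(\tau)}$ with the correct decay and keeps all Gagliardo--Nirenberg exponents admissible. The triple $s>2\sigma_2$, $p>1+\lceil s-\sigma+\sigma_2\rceil$ and the sandwich \eqref{condition1.3.2} are precisely tuned to close this balance. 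The contraction estimate then follows from the same machinery applied to the elementary bound $\big||u|^p-|v|^p\big|\lesssim\big(|u|^{p-1}+|v|^{p-1}\big)|u-v|$ and introduces no new difficulty beyond the self-mapping step.
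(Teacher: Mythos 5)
Your overall architecture coincides with the paper's own proof: the same data space $\frak{D}^s_{m,q}$, the same solution space $X(t)=C([0,t],H^s_q)\cap C^1([0,t],H^{s-2\sigma_2}_q)$ with the same four time-weights, the reduction to the self-mapping and Lipschitz inequalities for the Duhamel operator $N$, Gagliardo--Nirenberg for the $L^{mp}$ and $L^{qp}$ norms of $|u|^p$, and the fractional chain/Leibniz rules for the homogeneous Sobolev norms up to order $s-\sigma+\sigma_2$, with the three hypotheses on $p$ playing exactly the roles you assign to them.

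The step that fails as written is the time splitting. You split the Duhamel integral at $\tau=t/2$ and assert that on $[t/2,t]$ ``one has $t-\tau$ small'', so that the small-time $L^q$--$L^q$ estimates (those trading decay for extra regularity of the data) may be applied there. This is false: on $[t/2,t]$ the quantity $t-\tau$ ranges over $[0,t/2]$, which is unbounded as $t\to\infty$, whereas the small-time estimates of Corollary~\ref{corollary2.2} are valid only for times in $(0,1]$. On the sub-interval $[t/2,\,t-1]$ you would be forced to use the large-time $L^q$--$L^q$ estimates, which carry no $\left(\frac1m-\frac1q\right)$-decay gain (for the $|D|^s$-component the propagator factor is $(1+t-\tau)^{1-\frac{s}{2(\sigma-\sigma_1)}}$, which can even grow), so the exponent bookkeeping claimed in your sketch is not justified there. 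The paper avoids this by splitting at $\tau=[t-1]^{+}$: on $[0,[t-1]^{+}]$ one has $t-\tau\ge 1$ and the large-time $(L^m\cap L^q)$--$L^q$ decay estimates apply, while $[[t-1]^{+},t]$ has length at most $1$, so $1+\tau\sim 1+t$ and $t-\tau\le 1$ genuinely holds; there the small-time $L^q$--$L^q$ estimates with the nonlinearity measured in $H^{s-\sigma+\sigma_2}_q$ close the bound, using that \eqref{condition1.3.1} forces $p-\frac{np}{2(\sigma-\sigma_1)}\left(\frac1m-\frac1{qp}\right)<1-\frac{n}{2(\sigma-\sigma_1)}\left(\frac1m-\frac1q\right)-\frac{s}{2(\sigma-\sigma_1)}$. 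Your argument can be repaired either by adopting this split, or by keeping $t/2$ and adding the missing computation with the large-time $L^q$--$L^q$ estimates on $[t/2,t-1]$ (this still closes, since \eqref{condition1.3.1} also gives $p-\frac{n(p-1)}{2m(\sigma-\sigma_1)}<-1$, but it is absent from your sketch). A secondary gloss: for the Lipschitz property, the pointwise bound $\big||u|^p-|v|^p\big|\lesssim\big(|u|^{p-1}+|v|^{p-1}\big)|u-v|$ handles only the Lebesgue norms; for $\big\||u|^p-|v|^p\big\|_{\dot H^{s-\sigma+\sigma_2}_q}$ the paper must use the representation $|u|^p-|v|^p=p\int_0^1(u-v)\,G\big(\omega u+(1-\omega)v\big)\,d\omega$ with $G(w)=w|w|^{p-2}$, combined with the fractional Leibniz and chain rules, so that step is not free of additional work.
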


\section{Estimates for solutions to the linear Cauchy problem} \label{Linear estimates}
\noindent In this section, we will consider the homogeneous Cauchy problem (\ref{Main.Eq.3}). Applying the Fourier transform with $\hat{u}(t,\xi)=\mathfrak{F}_{x\to\xi}(u(t,x))$, we have
\begin{equation}\label{5.83}
\begin{cases}
    \widehat{u}_{tt}(t,\xi) + (\mu_1|\xi|^{2\sigma_1}+\mu_2|\xi|^{2\sigma_2})\widehat{u}_t(t,\xi) + |\xi|^{2\sigma} \widehat{u}(t,\xi) = 0  \\
    \widehat{u}(0,\xi) = \widehat{u}_0(\xi),\quad \widehat{u}_t(0,\xi)= \hat{u}_1(\xi).
\end{cases}
\end{equation}
The solution to $(\ref{5.83})$ is\\
\begin{equation*}
    \widehat{u}(t,\xi) = \widehat{K_0}(t,\xi) \widehat{u_0}(\xi) + \widehat{K_1}(t,\xi) \widehat{u_1}(\xi),
\end{equation*}
so that we can write the solution to $(\ref{Main.Eq.3})$ by the formula
\begin{equation*}
u(t,x) = K_0(t,x) * u_0(x) + K_1(t,x) * u_1(x),
\end{equation*}
where 
\begin{equation*}
\widehat{K_0}(t,\xi) = \frac{\lambda_{1} e^{\lambda_{2}t}-\lambda_{2}e^{\lambda_{1}t}}{\lambda_{1} - \lambda_{2}}
\text{ and }
\widehat{K_1}(t,\xi) = \frac{e^{\lambda_{1}t}-e^{\lambda_{2}t}}{\lambda_{1} - \lambda_{2}}.
\end{equation*}
The characteristic roots $\lambda_{ \pm} =\lambda_{ \pm}(|\xi|)$ are given by
\begin{equation*}
\lambda_{1,2}(|\xi|) =
\begin{cases}
    \displaystyle\f{1}{2}\left(-\mu_1|\xi|^{2\sigma_1} - \mu_2|\xi|^{2\sigma_2} \pm \sqrt{(\mu_1|\xi|^{2\sigma_1} + \mu_2|\xi|^{2\sigma_2})^{2} - 4 |\xi|^{2\sigma}}\right) & \text { if } \xi \in \mathbb{R}^n \setminus \Omega, \\
     \displaystyle\f{1}{2}\left(-\mu_1|\xi|^{2\sigma_1} - \mu_2|\xi|^{2\sigma_2} \pm i \sqrt{4 |\xi|^{2\sigma}-(\mu_1|\xi|^{2\sigma_1} + \mu_2|\xi|^{2\sigma_2})^{2} }\right) & \text { if } \xi \in \Omega,  
\end{cases}
\end{equation*}
where $\Omega = \left\{\xi \in \mathbb{R}^n: \mu_1|\xi|^{2\sigma_1} + \mu_2|\xi|^{2\sigma_2} < 2 |\xi|^{\sigma} \right\}$. In this paper, without loss of generality, we only need to consider the case $\mu_1=\mu_2 = 1$. Because of $0 < \sigma_1 <\sigma/2 < \sigma_2 < \sigma$, there exists a sufficiently small constant $\varepsilon^*>0$ such that
\begin{equation*}
    (-\ity, \varepsilon^*)\cup \left(\frac{1}{\varepsilon^*},\ity\right) \subset \Omega.
\end{equation*}
Then, taking account of the cases of small and large frequencies separately, one sees
\begin{align}
     &\lambda_{1} \sim -|\xi|^{2(\sigma-\sigma_1)},\quad 
    \lambda_{2} \sim -|\xi|^{2\sigma_1}, \quad \lambda_1-\lambda_2 \sim |\xi|^{2\sigma_1}\quad \text{ for } |\xi| \leq \varepsilon^*, \label{approx1} \\
    &\lambda_{1} \sim -|\xi|^{2(\sigma-\sigma_2)} ,\quad \lambda_{2} \sim -|\xi|^{2\sigma_2}, \quad \lambda_1-\lambda_2 \sim |\xi|^{2\sigma_2} \quad \text{ for }|\xi| \geq \frac{1}{\varepsilon^*}. \label{approx2}
\end{align} 
Let $\chi_k= \chi_k(r)$ with $k\in\{\rm L,M,H\}$ be smooth cut-off functions having the following properties:
\begin{align*}
&\chi_{\rm L}(r)=
\begin{cases}
1 &\quad \text{ if }r\le \varepsilon^*/2, \\
0 &\quad \text{ if }r\ge \varepsilon^*,
\end{cases}
\qquad
\chi_{\rm H}(r)=
\begin{cases}
1 &\quad \text{ if }r\ge 2/\varepsilon^*, \\
0 &\quad \text{ if }r\le 1/\varepsilon^*,
\end{cases} \\ 
&\text{and } \chi_{\rm M}(r)= 1- \chi_{\rm L}(r)- \chi_{\rm H}(r).
\end{align*}
We note that $\chi_{\rm M}(r)= 1$ if $\varepsilon^* \le r\le 1/\varepsilon^*$ and $\chi_2(r)= 0$ if $r \le \varepsilon^*/2$ or $r \ge 2/\varepsilon^*$. We now decompose the solution (\ref{Main.Eq.3}) into three parts localized separately to low, middle and high frequencies, that is,
\begin{equation*}
u(t,x)= u_{\chi_{\rm L}}(t,x)+ u_{\chi_{\rm M}}(t,x)+ u_{\chi_{\rm H}}(t,x),
\end{equation*}
where
$$u_{\chi_k}(t,x)= \mathfrak{F}^{-1}\big(\chi_k(|\xi|)\widehat{u}(t,\xi)\big)\quad \text{ with } k= \text{L, M, H}. $$
Here $\mathfrak{F}^{-1}$ stands for the inverse Fourier transform.
\subsection{$L^1$ estimates}

\begin{lemma}\label{Lemma2.1}
    The following estimates hold:
    \begin{align}
        \left\|\frak{F}^{-1}\left(|\xi|^{s}e^{\lambda_1(|\xi|) t}\chi_{\rm H}(|\xi|)\right)(t,\cdot)\right\|_{L^1} \lesssim 
    \begin{cases}
        t^{-\frac{s}{2(\sigma-\sigma_2)}} &\text{ if } t \in (0,1],\\
        e^{-ct} &\text{ if } t \in [1, \infty),
    \end{cases}\label{estimate2.1.100}
    \end{align}
    where $c$ is suitable positive constant.
\end{lemma}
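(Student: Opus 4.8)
The plan is to treat the two time regimes separately, exploiting that on $\mathrm{supp}\,\chi_{\rm H}$ we have $|\xi|\ge 1/\varepsilon^*$ and, by \eqref{approx2}, $\lambda_1(|\xi|)\le -c_0|\xi|^{2(\sigma-\sigma_2)}\le -2c<0$ for suitable constants $c_0,c>0$. Writing $\kappa:=2(\sigma-\sigma_2)\in(0,\sigma)$, the factor $e^{\lambda_1 t}$ behaves like the symbol of a fractional heat semigroup $e^{-t|D|^{\kappa}}$ restricted to high frequencies, so the scaling $\xi\mapsto t^{-1/\kappa}\xi$ already predicts the exact loss $t^{-s/\kappa}=t^{-s/(2(\sigma-\sigma_2))}$ coming from the amplitude $|\xi|^s$. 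For $t\ge 1$ I would first factor out the exponential decay: since $\lambda_1+c\le \tfrac12\lambda_1\le -\tfrac{c_0}{2}|\xi|^{\kappa}$ on the support, one writes $|\xi|^s e^{\lambda_1 t}\chi_{\rm H}=e^{-ct}\,|\xi|^se^{(\lambda_1+c)t}\chi_{\rm H}$, and it suffices to bound the $L^1$ norm of the inverse Fourier transform of the second factor uniformly in $t\ge1$; this is the same type of estimate as the case $t=1$ below, the $t$-powers produced by differentiation being absorbed by the Gaussian-type decay. Thus the whole difficulty concentrates on the regime $t\in(0,1]$.

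For $t\in(0,1]$ the main tool will be the interpolation bound
\[
\big\|\mathfrak{F}^{-1}(a)\big\|_{L^1}\lesssim \|a\|_{L^2}^{1-\frac{n}{2N}}\,\big\|(-\Delta_\xi)^{N/2}a\big\|_{L^2}^{\frac{n}{2N}},\qquad N\ \text{even},\ N>\tfrac n2,
\]
valid for any symbol $a$. I would derive it by splitting $\R^n$ into $\{|x|\le\rho\}$ and $\{|x|>\rho\}$, applying Cauchy--Schwarz on each piece together with Plancherel (so that $\big\|\,|x|^{N}\mathfrak{F}^{-1}(a)\big\|_{L^2}=\big\|(-\Delta_\xi)^{N/2}a\big\|_{L^2}$), and then optimising in $\rho$. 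Applied to $a(t,\xi)=|\xi|^se^{\lambda_1t}\chi_{\rm H}$, this reduces the lemma to computing the $t$-dependence of the two $L^2$ norms.

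To this end I would first prove the derivative estimates
\[
\big|\partial_\xi^\alpha\lambda_1(|\xi|)\big|\lesssim |\xi|^{\,2(\sigma-\sigma_2)-|\alpha|}\qquad (|\xi|\ge 1/\varepsilon^*),
\]
by implicit differentiation of the characteristic relation $\lambda_1^2+(|\xi|^{2\sigma_1}+|\xi|^{2\sigma_2})\lambda_1+|\xi|^{2\sigma}=0$, using that the spectral gap $\lambda_1-\lambda_2\sim|\xi|^{2\sigma_2}$ stays bounded away from zero, and iterating through Faà di Bruno's formula. Combining these with Leibniz's rule yields, away from the cut-off transition, the pointwise bound $|\partial_\xi^\alpha a|\lesssim |\xi|^{s-|\alpha|}\big(\sum_{l=0}^{|\alpha|}(t|\xi|^{\kappa})^l\big)e^{\lambda_1 t}$, while on the bounded annulus $1/\varepsilon^*\le|\xi|\le 2/\varepsilon^*$ (where $\partial\chi_{\rm H}\ne0$) every factor is $O(1)$ and contributes only a constant, which is subdominant as $t\to0^+$. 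Substituting $\tau=t|\xi|^{\kappa}$ in the resulting radial integrals then gives $\|a\|_{L^2}\sim t^{-(s+n/2)/\kappa}$ and $\big\|(-\Delta_\xi)^{N/2}a\big\|_{L^2}\lesssim t^{(-s+N-n/2)/\kappa}$; feeding these into the interpolation bound, the exponents combine as $-(s+\tfrac n2)(1-\tfrac{n}{2N})+(-s+N-\tfrac n2)\tfrac{n}{2N}=-s$, which yields precisely the claimed power $t^{-s/(2(\sigma-\sigma_2))}$.

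The main obstacle I anticipate is the all-order derivative control of $\lambda_1$ together with the Leibniz/Faà di Bruno bookkeeping needed to obtain the pointwise bounds on $\partial_\xi^\alpha a$ uniformly in $t\in(0,1]$, and the verification that the contribution of the cut-off transition region (and of the subleading terms with $l<|\alpha|$) is genuinely subdominant; once these symbol estimates are in place, the substitution and the interpolation inequality make the remaining computation routine.
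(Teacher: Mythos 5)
Your reduction for $t\ge 1$ is sound (once the exponential factor $e^{-ct}$ is pulled out, every term carries uniform exponential decay, so the interpolation inequality applies harmlessly there), but the heart of the proposal, the case $t\in(0,1]$, has a genuine gap, and it sits exactly at the point you deferred: the contribution of the cut-off transition region is \emph{not} subdominant. Write $\kappa=2(\sigma-\sigma_2)$ and $a(t,\xi)=|\xi|^{s}e^{\lambda_1 t}\chi_{\rm H}(|\xi|)$. On the annulus where the derivatives of $\chi_{\rm H}$ are nonzero one has $e^{\lambda_1 t}\to 1$ in $C^\infty$ as $t\to 0^+$, hence $\partial_\xi^\alpha a\to \partial_\xi^\alpha\big(|\xi|^{s}\chi_{\rm H}\big)\not\equiv 0$ there, and consequently $\big\|(-\Delta_\xi)^{N/2}a(t,\cdot)\big\|_{L^2}\ge c_N>0$ uniformly for small $t$. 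Your claimed bound $\big\|(-\Delta_\xi)^{N/2}a\big\|_{L^2}\lesssim t^{(N-s-n/2)/\kappa}$ therefore fails whenever $N>s+\tfrac n2$ (its right-hand side tends to $0$); it is correct only in the range $N\le s+\tfrac n2$, where the scaling part dominates the $O(1)$ edge term. Since the Carlson--Beurling-type inequality you invoke requires $N>\tfrac n2$, your scheme yields the exponent $-s/\kappa$ only if one can choose $N\in\big(\tfrac n2,\,s+\tfrac n2\big]$: this is possible when $s$ is large enough, but for small $s$ no admissible $N$ exists, even allowing fractional $N$. The case $s=0$ is the cleanest failure: the lemma asserts a bound uniform in $t\in(0,1]$, while interpolation with the true sizes $\|a\|_{L^2}\sim t^{-n/(2\kappa)}$ and $\big\|(-\Delta_\xi)^{N/2}a\big\|_{L^2}\sim 1$ gives only $t^{-\frac{n}{2\kappa}\left(1-\frac{n}{2N}\right)}\to\infty$. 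And $s=0$ cannot be discarded: it is exactly what is used downstream, e.g.\ in Proposition \ref{Pro2.3} and Lemma \ref{lemma2.4}.

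The defect is structural rather than a matter of bookkeeping. Every smooth localization to high frequencies has an edge, and that edge always injects an $O(1)$ quantity into weighted $L^2$ norms of derivatives, so no further smooth decomposition removes it; dropping the cutoff altogether would force you to control $\lambda_1$ across $\partial\Omega$, where the two characteristic roots collide and cease to be smooth. This is precisely why the paper works in physical space: it represents the kernel via modified Bessel functions, integrates by parts with the vector field $\mathcal{V}$, splits the radial integral at $r=1/|x|$, and for $s=0$ exploits the cancellation devices $|\sin\nu|\le\nu^{a}$ and $1-\mathcal{J}_0(\nu)\le\nu^{a}$ (Lemma \ref{lemmaB.1}) to obtain pointwise kernel decay of order $|x|^{-n-\delta}$. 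That oscillation argument is what detects that the near-$\delta$ concentration of the kernel as $t\to0^+$ carries bounded $L^1$ mass, a feature invisible to $L^2$-weighted estimates. Your symbol estimates for $\lambda_1$ (via the characteristic equation and Fa\`a di Bruno) do match the paper's Proposition \ref{proposition2.1}, and your method does recover the statement for sufficiently large $s$; but as written it does not prove the lemma.
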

Before proving Lemma \ref{Lemma2.1}, we need to prove the following auxiliary proposition:
\begin{proposition}\label{proposition2.1}
    The following estimate hold:
    \begin{equation}\label{Es.of.Pro2.1}
        \left|\partial_\eta^j e^{\lambda_1(\eta)}\chi_{\rm H}(\eta)\right| \lesssim e^{-c\eta^{2(\sigma-\sigma_2)}} \eta^{2(\sigma-\sigma_2)-j} 
    \end{equation}
    for all $j \geq 1$ and $c$ is suitable positive constant.
\end{proposition}
\begin{proof}
   First, we rewrite $\lambda_1(\eta)$ as follows
    \begin{align*}
        \lambda_1(\eta) =& -\eta^{2(\sigma-\sigma_2)}\left(\frac{1}{\eta^{2(\sigma_1-\sigma_2)}+1}\frac{1}{\frac{1}{2}+\frac{1}{2}\sqrt{1-\frac{4\eta^{2(\sigma-2\sigma_2)}}{\left(\eta^{2(\sigma_1-\sigma_2)}+1\right)^2}}}\right)\\
        :=& u(\eta) v(\eta) 
    \end{align*}
    Applying Faa di Bruno's formula we obtain
    \begin{align}
        \partial_{\eta}^j e^{\lambda_1(\eta)t} = 
        \displaystyle \sum a_i e^{\lambda_1(\eta)}\displaystyle \prod_{i=1}^j \left(\lambda_1^{(i)}(\eta)\right)^{m_i}\label{equation.pro2.1.1}
    \end{align}
    where set of non-negative integers $(m_1, m_2,..., m_n)$ satisfying the constraint of the following Diophantine equation: $m_1 + 2m_2+...+jm_j =j$. Next, applying Leibniz's formula to the higher derivative we have
    \begin{align*}
        \lambda_1^{(i)}(\eta) =\left(u(\eta)v(\eta)\right)^{(i)} =& \displaystyle \sum_{k=0}^{i}b_k u^{(i-k)}(\eta) v^{(k)}(\eta)\\
        =&\displaystyle\sum_{k=0}^ib_k\eta^{2(\sigma-\sigma_2)-i+k}v^{(k)}(\eta).
    \end{align*}
    For this reason, we need to estimate $v^{(k)}(\eta)$. Use Maclaurin expansion for functions 
    \begin{align*}
    v_1(\eta) &= \frac{1}{\eta^{2(\sigma_1-\sigma_2)}+1},\\
    v_2(\eta) &= \frac{1}{\frac{1}{2}+\frac{1}{2}\sqrt{1-\frac{4\eta^{2(\sigma-2\sigma_2)}}{\left(\eta^{2(\sigma_1-\sigma_2)}+1\right)^2}}} = \frac{1}{\frac{1}{2}+\frac{1}{2}\sqrt{1-4\eta^{2(\sigma-2\sigma_2)}(v_1(\eta))^2}}
    \end{align*}
    we obtain
    \begin{align*}
        v(\eta) &= v_1(\eta) v_2(\eta)\\
        &= 1 + c_1 \eta^{2(\sigma-2\sigma_2)} +c_2 \eta^{2(\sigma_1-\sigma_2)} + A(\eta).
    \end{align*}
    where 
    \begin{equation*}
        A(\eta) = \displaystyle \sum_{\ell > \max\{2\sigma_2-\sigma, 2(\sigma_2-\sigma_1)\}} c_\ell\eta^{-\ell}. 
    \end{equation*}
From here, we have a conclusion
\begin{align*}
   | v^{(k)}(\eta)| \lesssim
   \eta^{-k} 
\end{align*}
for all $k \geq 0$.
This implies
\begin{align*}
    |\lambda_1^{(i)}(\eta)| \lesssim \eta^{2(\sigma-\sigma_2)-i}
\end{align*}
for all $i \geq 0$. Combined with (\ref{equation.pro2.1.1}), we have a conclusion for Proposition \ref{proposition2.1}.
\end{proof}
\begin{corollary}\label{corollary1}
    The following estimate hold:
    \begin{align}\label{Es.of.Co1}
        \left|\partial_r^j\left(e^{\lambda_1(rt^{-\frac{1}{2(\sigma-\sigma_2)}})t}\right) \chi_{\rm H}\left(rt^{-\frac{1}{2(\sigma-\sigma_2)}}\right)
\right| \lesssim e^{-cr^{2(\sigma-\sigma_2)}}r^{2(\sigma-\sigma_2)-j}
    \end{align}
    for all $j \geq 1$, $t \in (0,1]$ and c is suitable positive constant.
\end{corollary}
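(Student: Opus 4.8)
The plan is to reduce the corollary to the computations already performed in the proof of Proposition \ref{proposition2.1} by exploiting the parabolic scaling $\eta = r\,t^{-1/(2(\sigma-\sigma_2))}$. Set $\alpha := 2(\sigma-\sigma_2) > 0$ for brevity, and recall from that proof the factorization $\lambda_1(\eta) = -\eta^{\alpha} v(\eta)$, where the factor $v$ satisfies $|v^{(k)}(\eta)| \lesssim \eta^{-k}$ for all $k \ge 0$ and $v(\eta) \to 1$ as $\eta \to \infty$. The first observation is that under $\eta = r\,t^{-1/\alpha}$ one has $\eta^{\alpha} t = r^{\alpha}$, so that $\lambda_1(\eta)\,t = -r^{\alpha} w(r)$ with $w(r) := v(r\,t^{-1/\alpha})$. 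Hence the quantity to be bounded becomes $\partial_r^j e^{g(r)}$, where $g(r) := -r^{\alpha} w(r)$, restricted by $\chi_{\rm H}(r\,t^{-1/\alpha})$ to the region where $r\,t^{-1/\alpha} \ge 1/\varepsilon^{*}$.

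The key step, and the reason the resulting bound is uniform in $t \in (0,1]$, is that the scaling is tuned so that the powers of $t$ cancel. Since $\eta$ depends linearly on $r$ with $d\eta/dr = t^{-1/\alpha}$, the chain rule gives $w^{(k)}(r) = v^{(k)}(\eta)\,t^{-k/\alpha}$, whence
$$|w^{(k)}(r)| \lesssim \eta^{-k} t^{-k/\alpha} = \big(r\,t^{-1/\alpha}\big)^{-k} t^{-k/\alpha} = r^{-k}$$
for every $k \ge 0$, with implicit constants independent of $t$. Feeding this into the Leibniz expansion of $g = -r^{\alpha} w$ exactly as in Proposition \ref{proposition2.1} then yields $|g^{(i)}(r)| \lesssim r^{\alpha - i}$ for all $i \ge 0$.

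With these derivative bounds in hand I would repeat the Faà di Bruno argument verbatim: for $j \ge 1$,
$$\partial_r^j e^{g(r)} = \sum a_i\, e^{g(r)} \prod_{i=1}^{j}\big(g^{(i)}(r)\big)^{m_i}, \qquad \sum_{i} i\, m_i = j,$$
so each summand is controlled by $e^{g(r)}\,r^{\alpha M - j}$ with $M := \sum_i m_i \ge 1$. On the support of $\chi_{\rm H}(r\,t^{-1/\alpha})$ the argument $r\,t^{-1/\alpha} \ge 1/\varepsilon^{*}$ is large, so $v(r\,t^{-1/\alpha}) \ge c_0 > 0$ and therefore $g(r) \le -c_0 r^{\alpha}$. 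The surplus powers $r^{\alpha(M-1)}$ arising from $M \ge 2$ are then absorbed by splitting $e^{g(r)} = e^{g(r)/2}e^{g(r)/2}$ and using $e^{-\frac{c_0}{2}r^{\alpha}} r^{\alpha(M-1)} \lesssim 1$; each term is dominated by $e^{-c r^{\alpha}} r^{\alpha - j}$ for a suitable $c \in (0, c_0/2)$, and summing the finitely many terms gives \eqref{Es.of.Co1}.

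The main obstacle I anticipate is bookkeeping rather than conceptual: one must check that the constants produced by the chain rule, the Leibniz expansion and the absorption step can all be taken independent of $t \in (0,1]$, which is exactly what the cancellation $\eta^{-k} t^{-k/\alpha} = r^{-k}$ guarantees. No new idea beyond Proposition \ref{proposition2.1} is required, since the corollary is simply its scale-invariant reformulation.
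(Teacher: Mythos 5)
Your proposal is correct and takes essentially the same route as the paper: both rewrite $\lambda_1\big(rt^{-\frac{1}{2(\sigma-\sigma_2)}}\big)t=-r^{2(\sigma-\sigma_2)}v\big(rt^{-\frac{1}{2(\sigma-\sigma_2)}}\big)$ and use the chain-rule cancellation $t^{-\frac{k}{2(\sigma-\sigma_2)}}\big|v^{(k)}(\eta)\big|\lesssim t^{-\frac{k}{2(\sigma-\sigma_2)}}\eta^{-k}=r^{-k}$ to reduce the corollary to the Fa\`a di Bruno/Leibniz argument of Proposition \ref{proposition2.1}. Your write-up simply makes explicit what the paper leaves implicit, namely the lower bound $v\ge c_0>0$ on the high-frequency support, the absorption of the surplus powers $r^{2(\sigma-\sigma_2)(M-1)}$ into the exponential, and the minus sign that the paper's displayed identity omits.
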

\begin{proof}
    We have a performance later
    \begin{align*}
        \lambda_1\left(rt^{-\frac{1}{2(\sigma-\sigma_2)}}\right)t = r^{2(\sigma-\sigma_2)}v\left(r t^{-\frac{1}{2(\sigma-\sigma_2)}}\right).
    \end{align*}
    Here the function $v(\eta)$ is defined the same as in the proof of Proposition \ref{proposition2.1}. Perform the same proof steps as in proof of Proposition \ref{proposition2.1}, noting that
    \begin{align*}
       \left( v\left(r t^{-\frac{1}{2(\sigma-\sigma_2)}}\right)\right)^{(k)} &= t^{-\frac{k}{2(\sigma-\sigma_2)}} v^{(k)}\left(r t^{-\frac{1}{2(\sigma-\sigma_2)}}\right)\\
       &\lesssim r^{-k}
    \end{align*}
    for all $k \geq 0$,
    we have a conclusion Corollary \ref{corollary1}.
\end{proof}
\begin{proof}[Proof of Lemma \ref{Lemma2.1}]
    \begin{itemize}[leftmargin=*]
    \item[$\bullet$] \textbf{We first prove the estimate for $t \geq 1$}. 
With $|x| \leq \frac{1}{\epsilon^{*}}$ we have
 \begin{align*}
    &\left\|\frak{F}^{-1}\left(e^{\lambda_1(|\xi|) t}|\xi|^{s}\chi_{\rm H}(|\xi|)\right)(\chi_{\rm L}(|x|)+\chi_{\rm M}(|x|))\right\|_{L^1}\\ &\qquad\lesssim \int_{\mathbb{R}^n} (\chi_{\rm L}(|x|)+\chi_{\rm M}(|x|)) dx \int_{\mathbb{R}^n}e^{-|\xi|^{2(\sigma-\sigma_2)}t}|\xi|^s \chi_{\rm H}(|\xi|) d\xi\\
    &\qquad\lesssim e^{-ct}.
\end{align*}
Next we will prove the above estimate for $|x| > \frac{1}{\epsilon^{*}}$. We introduce the function
    \begin{equation*}
        F(t,x):= \frak{F}^{-1}\left(e^{\lambda_1(|\xi|) t}|\xi|^{s}\chi_{\rm H}(|\xi|)\right)(t,x).
    \end{equation*}
    Using the modified Bessel functions we get
    \begin{align*}
        F(t,x):= c\int_0^{\infty} e^{\lambda_1(r)t}r^{s+n-1}\chi_{\rm H}(r) \tilde{\mathcal{J}}_{\frac{n}{2}-1}(r|x|)dr.
    \end{align*}
    \textit{Let us consider odd spatial dimensions $n = 2m+1, m \geq 0$.} 
    By introducing the vector field
    $$\mathcal{V}(f(r)) := \f{d}{dr}\left(\frac{1}{r}f(r)\right),$$
    we carry out $m+1$ steps of partial integration to obtain 
    \begin{align*}
        F(t,x) =& -\frac{c}{|x|^n} \int_0^{\infty}\partial_r\left(\mathcal{V}^m \left(e^{\lambda_1(r)t}r^{s+2m}\chi_{\rm H}(r)\right)\right)\sin(r|x|)dr\\
        =& \displaystyle \sum_{j=0}^m \displaystyle \sum_{k=0}^{j+1} \frac{c_{jk}}{|x|^n}\int_0^{\infty} \partial_r^{j+1-k} e^{\lambda_1(r) t} \partial_r^k \chi_{\rm H}(r) r^{s+j} \sin(r|x|)dr\\
        &+ \displaystyle \sum_{j=0}^m \displaystyle \sum_{k=0}^j \frac{c_{jk}}{|x|^n}\int_0^{\infty}\partial_r^{j-k}e^{\lambda_1(r)t}\partial_r^{k+1}\chi_{\rm H}(r) r^{s+j} \sin(r|x|) dx \\
        &+ \displaystyle \sum_{j=1}^m \displaystyle \sum_{k=0}^j \frac{c_{jk}}{|x|^n}\int_0^{\infty} \partial_r^{j-k}e^{\lambda_1(r)t}\partial_r^k \chi_{\rm H}(r)r^{s+j-1} \sin(r|x|)dr
    \end{align*}
    with some constants $c_{jk}$. 
    Now, we consider the integrals 
    \begin{equation*}
        F_{j,0} := \int_0^{\infty} \partial_r^{j+1} e^{\lambda_1(r) t} \chi_{\rm H}(r) r^{s+j} \sin(r|x|)dr,
    \end{equation*}
    where $j = 0,...,m$. We note that for $j = 0,.., m$ it holds
    \begin{align*}
        \left|\partial_r^{j+1} e^{\lambda_1(r) t}\right| \lesssim r^{2(\sigma-\sigma_2)-j-1} e^{-ct},
    \end{align*}
    on the support of $\chi_{\rm H}$ and on the support of its derivatives. Hence, the splitting of the integral $J_{j,0}$ gives on the one hand
    \begin{align*}
        \int_0^{\frac{1}{|x|}} \left|\partial_r^{j+1} e^{\lambda_1(r) t} \chi_{\rm H}(r) r^{s+j} \sin(r|x|)\right|dr &\lesssim e^{-ct} \int_0^{\frac{1}{|x|}} r^{2(\sigma-\sigma_2)+s-1} dr \\
        &\lesssim \frac{e^{-ct}}{|x|^{2(\sigma-\sigma_2)+s}},
    \end{align*}
and on the other hand
\begin{align*}
    \left|\int_{\frac{1}{|x|}}^{\infty} \partial_r^{j+1} e^{\lambda_1(r) t} \chi_{\rm H}(r) r^{s+j} \sin(r|x|)dr\right|
    \leq& \frac{1}{|x|} \left|\partial_r^{j+1} e^{\lambda_1(r) t} \chi_{\rm H}(r) r^{s+j} \cos(r|x|)\right|_{r = \frac{1}{|x|}}\\
    &+ \frac{1}{|x|}\left|\int_\frac{1}{|x|}^{\infty} \partial_r \left(\partial_r^{j+1} e^{\lambda_1(r)t}\chi_{\rm H}(r)r^{s+j} \right)\cos(r|x|) dr\right|\\
     \lesssim& \frac{e^{-ct}}{|x|^{2(\sigma-\sigma_2)+s}} + \frac{e^{-ct}}{|x|} \int_{\frac{1}{|x|}}^{\infty} e^{-c_1r^{2(\sigma-\sigma_2)}t} r^{2(\sigma-\sigma_2)+s-2} dr\\
     \lesssim& \frac{e^{-ct}}{|x|^{2(\sigma-\sigma_2)+s}}.
\end{align*}
Summarizing, we have shown
\begin{equation*}
    |F_{j,0}| \lesssim \frac{e^{-ct}}{|x|^{2(\sigma-\sigma_2)+s}}.
\end{equation*}
Taking $k \geq 1$ we study the integrals
    \begin{equation}\label{equation2.1.1}
        F_{j,k} := \int_0^{\infty} \partial_r^{j+1-k} e^{\lambda_1(r) t} \partial_r^k \chi_{\rm H}(r) r^{s+j} \sin(r|x|)dr.
    \end{equation}
The splitting of the integral $F_{j,k}$ gives on the one hand
\begin{align*}
    &\left|\int_0^{\frac{1}{|x|}}\partial_r^{j+1-k}e^{\lambda_1(r)t}\partial_r^k\chi_{\rm H}(r)r^{s+j}\sin(r|x|)dr\right| \\
    &\qquad\lesssim e^{-ct} \int_0^{\frac{1}{|x|}} r^{2(\sigma-\sigma_2)+k-1+s} \partial_r^k\chi_{\rm H}(r) dr \lesssim \frac{e^{-ct}}{|x|^{2(\sigma-\sigma_2)+k+s}},
\end{align*}
and on the other hand
\begin{align*}
    &\left|\int_{\frac{1}{|x|}}^{\infty} \partial_r^{j+1-k}e^{\lambda_1(r)t}\partial_r\chi_{\rm H}(r)r^{s+j}\sin(r|x|)dr\right| \\
    &\qquad\le \frac{1}{|x|}\left|\partial_r^{j+1-k}e^{\lambda_1(r)t}\partial_r^k\chi_{\rm H}(r)r^{s+j}\cos(r|x|)\right|_{r=\frac{1}{|x|}} \\
    &\qquad\quad+ \frac{1}{|x|}\left|\int_{\frac{1}{|x|}}^{\infty}\partial_r\left(\partial_r^{j+1-k}e^{\lambda_1(r)t}\partial_r^k\chi_{\rm H}(r)r^{s+j}\right)\cos(r|x|)dr\right|\\
    &\qquad\lesssim \frac{e^{-ct}}{|x|^{2(\sigma-\sigma_2)+k+s}}.
\end{align*}
    where $c$ is a suitable positive constant. From this we conclude
    \begin{equation*}
        |F_{j,k}| \lesssim \frac{e^{-ct}}{|x|^{2(\sigma-\sigma_2)+k+s}}.
    \end{equation*}
From the above estimates we have
\begin{equation} \label{estimate2.1.1}
    \left\|\frak{F}^{-1}\left(e^{\lambda_1 t}|\xi|^{s}\chi_{\rm H}(|\xi|)\right)\chi_{\rm H}(|x|) \right\|_{L^1} \lesssim e^{-ct}.
\end{equation}
for all $t \geq 1$ and $n = 2m+1, m \geq 0$ with $c$ is a suitable positive constant.\\
\noindent\textit{Let us consider even spatial dimensions $n = 2m, m \geq 1$.}  In this case we shall study
\begin{equation*}
    F(t,x) = c \int_0^{\infty} e^{\lambda_1(r)t} r^{s+2m-1}\chi_{\rm H}(r) \tilde{\mathcal{J}}_{m-1}(r|x|)dr.
\end{equation*}
Indeed, we carry out $m-1$ steps of partial integration to obtain 
 \begin{align*}
     F(t,x) &= \frac{c}{|x|^{2m-2}}\int_0^{\infty} \mathcal{V}^{m-1}\left(e^{\lambda_1(r)t}r^{s+2m-1}\left(\chi_{\rm M}(r)+\chi_{\rm H}(r)\right)\right) \mathcal{J}_0(r|x|)dr \\
     &= \displaystyle \sum_{j=0}^{m-1}\frac{c_j}{|x|^{2m-2}}\int_0^{\infty} \partial_r^{j}\left(e^{\lambda_1(r)t}\chi_{\rm H}(r)\right)r^{s+j+1} \mathcal{J}_0(r|x|) dr\\
     &:= \displaystyle \sum_{j=0}^{m-1} c_j H_{j}(t,x) .
 \end{align*}
 Using the first rule and the fifth rule of the modified Bessel functions we conclude
 \begin{align*}
     H_0(t,x) =& \frac{1}{|x|^{2m-2}}\int_0^{\infty} \left(e^{\lambda_1(r)t}\chi_{\rm H}(r)\right)r^{s+1} \mathcal{J}_0(r|x|) dr\\
     =& \frac{1}{|x|^{2m-2}} \int_0^{\infty} 2e^{\lambda_1(r)t}\chi_{\rm H}(r)r^{s+1} \tilde{\mathcal{J}}_1(r|x|) dr\\
     &- \frac{1}{|x|^{2m-2}} \int_0^{\infty} \partial_r\left(e^{\lambda_1(r)t}\chi_{\rm H}(r)r^{s+2}\right)\tilde{\mathcal{J}}_1(r|x|) dr \\
     =& - \frac{1}{|x|^{2m-2}} \int_0^{\infty} \partial_r\left(e^{\lambda_1(r)t}\chi_{\rm H}(r)r^{s}\right)r^2\tilde{\mathcal{J}}_1(r|x|) dr \\
     =& - \frac{1}{|x|^n} \int_0^{\infty} \partial_r\left(\partial_r\left(e^{\lambda_1(r)t}\chi_{\rm H}(r)r^{s}\right)r\right)\mathcal{J}_0(r|x|) dr.
 \end{align*}
Noticing that
 \begin{equation*}
     \left|\partial_r\left(\partial_r\left(e^{\lambda_1(r)t}\chi_{\rm H}(r)r^{s}\right)r\right)\right| \lesssim e^{-ct} r^{2(\sigma-\sigma_2)+s-1} \text{ for all } t \geq 1
 \end{equation*}
 on the support of $\chi_{\rm M}$ and $\chi_{\rm H}$, we have
 \begin{equation*}
     \left|\int_0^{\frac{1}{|x|}}\partial_r\left(\partial_r\left(e^{\lambda_1(r)t}\chi_{\rm H}(r)r^{s}\right)r\right)\mathcal{J}_0(r|x|)dr \right|  \lesssim \frac{e^{-ct}}{|x|^{2(\sigma-\sigma_2)+s}}.
 \end{equation*}
 Here we have used the estimate $|\mathcal{J}_0(s)| \lesssim 1$ for $s \in [0,1]$. Thanks to the relation $|\mathcal{J}_0(s)| \lesssim s^{-\frac{1}{2}}$, we derive
 \begin{align*}
     \left|\int_{\frac{1}{|x|}}^{\infty}\partial_r\left(\partial_r\left(e^{\lambda_1(r)t}\chi_{\rm H}(r)r^{s}\right)r\right)\mathcal{J}_0(r|x|)dr \right| &\lesssim \frac{e^{-ct}}{|x|^{\frac{1}{2}}}\int_{\frac{1}{|x|}}^1 r^{2(\sigma-\sigma_2)+s-\frac{3}{2}} dr \\
     &\lesssim 
    \begin{cases}
        \f{e^{-ct}}{|x|^{\frac{1}{2}}} &\text{ if } 2(\sigma-\sigma_2)+s > \frac{1}{2},\\
         \f{e^{-ct}\log(|x|)}{|x|^{2(\sigma-\sigma_2)+s}} &\text{ if } 2(\sigma-\sigma_2)+s = \frac{1}{2},\\
         \f{e^{-ct}}{|x|^{2(\sigma-\sigma_2)+s}} &\text{ if } 2(\sigma-\sigma_2)+s < \frac{1}{2}.
    \end{cases}
 \end{align*}
 From the above two estimates one obtains
 \begin{equation*}
     |H_0(t,x)| \lesssim \frac{e^{-ct}}{|x|^{n + \frac{1}{2}\min\{\frac{1}{2},s+2(\sigma-\sigma_2)\}}} \quad \text{ for all } t \geq 1.
 \end{equation*}
 Let $i \in [1, m-1]$ be an integer. Then, the same treatment as we did for $\mathcal{J}_0(t,x)$ implies
 \begin{align*}
     H_j(t,x) &= \frac{1}{|x|^{2m-2}}\int_0^{\infty} \partial_r^{j}\left(e^{\lambda_1(r)t}\chi_{\rm H}(r)\right)r^{s+j+1} \mathcal{J}_0(r|x|) dr\\
     &= -\frac{1}{|x|^n}\int_0^{\infty} \partial_r \left( \partial_r\left(\partial_r^j \left(e^{\lambda_1(r)t}\chi_{\rm H}(r)\right)r^{s+j}\right)r\right) \mathcal{J}_0(r|x|)dr.
 \end{align*}
Observing that
 \begin{equation*}
     \left|\partial_r \left( \partial_r\left(\partial_r^j \left(e^{\lambda_1(r)t}\chi_{\rm H}(r)\right)r^{s+j}\right)r\right)\right| \lesssim e^{-ct} e^{-c_1r^{2(\sigma-\sigma_2)}t} r^{2(\sigma-\sigma_2)+s-1},
 \end{equation*}
 on the support of $\chi_{\rm M}$ and $\chi_{\rm H}$, where $c_1$ and $c$ are suitable positive constant, we can follow the same steps as the estimation for $J_0(t,x)$ to achieve 
 \begin{equation*}
     |H_i(t,x)| \lesssim \frac{e^{-ct}}{|x|^{n+s+2(\sigma-\sigma_2)}},
 \end{equation*}
 for all $i \in [1, m-1]$ and $m \geq 1$. From the $J_i(t,x)$ estimates above with $i \in [0, m-1]$, we conclude
\begin{equation}\label{estimate2.1.2}
    \left\|\frak{F}^{-1}\left(e^{\lambda_1 t}|\xi|^{s}\chi_{\rm H}(|\xi|)\right)\chi_{\rm H}(|x|) \right\|_{L^1} \lesssim e^{-ct},
\end{equation}
for all $t \geq 1$ and $n = 2m, m \geq 1$ with $c$ is a suitable positive constant. From the estimates $(\ref{estimate2.1.1})$ and (\ref{estimate2.1.2}) we arrive at
\begin{equation}\label{estimate2.1.3}
    \left\|\frak{F}^{-1}\left(e^{\lambda_1 t}|\xi|^{s}\chi_{\rm H}(|\xi|)\right)\chi_{\rm H}(|x|) \right\|_{L^1} \lesssim e^{-ct}
\end{equation}
for all $t \geq 1$.
\item[$\bullet$] \textbf{Now we turn to verify (\ref{estimate2.1.100}) for $t \in (0,1]$}. 
Taking account of $|x| \leq \frac{1}{\epsilon^{*}}$, we split the integral
\begin{align*}
    &\int_{\mathbb{R}^n} \frak{F}^{-1}\left(e^{\lambda_1(|\xi|) t}|\xi|^{s}\chi_{\rm H}(|\xi|)\right) (\chi_{\rm L}(|x|) +\chi_{\rm M}(|x|))dx\\
    &\,\,=  \int_{\mathbb{R}^n} \frak{F}^{-1}\left(e^{\lambda_1(|\xi|) t}|\xi|^{s}\chi_{\rm H}(|\xi|)\right) (\chi_{\rm L}(|x|)+\chi_{\rm M}(|x|))\left(\chi_{\rm L}\left(|x|t^{-\frac{1}{2(\sigma-\sigma_2)}}\right)+\chi_{\rm M}\left(|x|t^{-\frac{1}{2(\sigma-\sigma_2)}}\right)\right) dx\\
    &\quad + \int_{\mathbb{R}^n} \frak{F}^{-1}\left(e^{\lambda_1(|\xi|) t}|\xi|^{s}\chi_{\rm H}(|\xi|)\right) (\chi_{\rm L}(|x|)+\chi_{\rm M}(|x|))\chi_{\rm H}\left(|x|t^{-\frac{1}{2(\sigma-\sigma_2)}}\right)dx \\
    &\,\,:= I_1 + I_2 .
\end{align*}
To estimate both integrals, we use the change of variables $\eta= \xi t^{\frac{1}{2(\sigma-\sigma_2)}}$ and $x= y t^{\frac{1}{2(\sigma-\sigma_2)}}$. For the first integral one has
\begin{align}
   |I_1| &\lesssim t^{-\frac{s}{2(\sigma-\sigma_2)}}\int_{\mathbb{R}^n}\chi_{\rm L}(|y|) +\chi_{\rm M}(|y|)dy \int_{\mathbb{R}^n}e^{-|\eta|^{2(\sigma-\sigma_2)}}|\eta|^{s} d\eta \notag\\
    &\lesssim t^{-\frac{s}{2(\sigma-\sigma_2)}}. \label{estimate2.1.4}
\end{align}
For the second integral, using the modified Bessel functions we get 
\begin{align*}
|I_2| &\lesssim t^{-\frac{s}{2(\sigma-\sigma_2)}}\int_0^{\infty} \left(\chi_{\rm L}\left(|y|t^{\frac{1}{2(\sigma-\sigma_2)}} \right)+\chi_{\rm M}\left(|y|t^{\frac{1}{2(\sigma-\sigma_2)}} \right)\right)\chi_{\rm H}(|y|) \\ 
&\qquad\qquad\qquad \times \left|\int_0^{\infty}e^{\lambda_1(r t^{-\frac{1}{2(\sigma-\sigma_2)}})t}r^{s+n-1}\chi_{\rm H}\left(rt^{-\frac{1}{2(\sigma-\sigma_2)}}\right) \cos(r|y|)dr\right| dy.
\end{align*}
\textit{Let us consider odd spatial dimensions $n = 2m+1, m \geq 1$.} Using the vector field $\mathcal{V}(f(r))$ we carry out $m+1$ steps of partial integration to obtain
\begin{align}
    P(t,y) :=& p_1\int_0^{\infty}e^{\lambda_1(r t^{-\frac{1}{2(\sigma-\sigma_2)}})t}r^{s+n-1}\chi_{\rm H}\left(rt^{-\frac{1}{2(\sigma-\sigma_2)}}\right) \tilde{\mathcal{J}}_{\frac{n}{2}-1}(r|y|) dr \label{equation2.1.1}\\
    =& \frac{p_1}{|y|^n} \int_0^{\infty} \partial_r\left(\mathcal{V}^m \left(e^{\lambda_1(r t^{-\frac{1}{2(\sigma-\sigma_2)}})t}r^{s+n-1}\chi_{\rm H}\left(rt^{-\frac{1}{2(\sigma-\sigma_2)}}\right)\right)\right) \sin(r|y|)dr \notag\\
    =& \displaystyle \sum_{j=0}^m \displaystyle \sum_{k=0}^{j+1} \frac{p_{jk}}{|y|^n} \int_0^{\infty} \partial_r^{j+1-k} e^{\lambda_1(r t^{-\frac{1}{2(\sigma-\sigma_2)}})t}\partial_r^k\chi_{\rm H}\left(rt^{-\frac{1}{2(\sigma-\sigma_2)}}\right)r^{s+j}\sin(r|y|)dr \notag\\
    &+  \displaystyle \sum_{j=0}^m \displaystyle \sum_{k=0}^{j} \frac{p_{jk}}{|y|^n} \int_0^{\infty} \partial_r^{j-k} e^{\lambda_1(r t^{-\frac{1}{2(\sigma-\sigma_2)}})t}\partial_r^{k+1}\chi_{\rm H}\left(rt^{-\frac{1}{2(\sigma-\sigma_2)}}\right) r^{s+j}\sin(r|y|)dr \notag\\
    &+  \displaystyle \sum_{j=1}^m \displaystyle \sum_{k=0}^{j} \frac{p_{jk}}{|y|^n} \int_0^{\infty} \partial_r^{j-k} e^{\lambda_1(r t^{-\frac{1}{2(\sigma-\sigma_2)}})t}\partial_r^{k}\chi_{\rm H}\left(rt^{-\frac{1}{2(\sigma-\sigma_2)}}\right) r^{s+j-1}\sin(r|y|)dr \notag.
\end{align} 
For this reason we will consider integrals
\begin{align*}
    |P_{j,k}(r,y)| &= \frac{1}{|y|^n} \left|\int_0^{\infty} \partial_r^{j-k} e^{\lambda_1(r t^{-\frac{1}{2(\sigma-\sigma_2)}})t}\partial_r^{k}\chi_{\rm H}\left(rt^{-\frac{1}{2(\sigma-\sigma_2)}}\right) r^{s+j-1}\sin(r|y|)dr\right|,
\end{align*}
where $j \geq k$ and $(j,k) \ne (0,0)$. First, we consider the case $j > k \geq 0$. Applying Corollary \ref{corollary1}, splitting the above integral into two parts we get
\begin{align*}
    &\left|\int_0^{\frac{1}{|y|}} \partial_r^{j-k} e^{\lambda_1(r t^{-\frac{1}{2(\sigma-\sigma_2)}})t}\partial_r^{k}\chi_{\rm H}\left(rt^{-\frac{1}{2(\sigma-\sigma_2)}}\right) r^{s+j-1}\sin(r|y|)dr\right| \\
    &\qquad\lesssim \int_0^{\frac{1}{|y|}} e^{-c_1 r^{2(\sigma-\sigma_2)}} r^{s+2(\sigma-\sigma_2)-1} dr\\
    &\qquad\lesssim \frac{1}{|y|^{s+2(\sigma-\sigma_2)}}.
\end{align*}
On the other hand, performing integration by parts we obtain
\begin{align*}
    &\left|\int_{\frac{1}{|y|}}^{\infty} \partial_r^{j-k} e^{\lambda_1(r t^{-\frac{1}{2(\sigma-\sigma_2)}})t}\partial_r^{k}\chi_{\rm H}\left(rt^{-\frac{1}{2(\sigma-\sigma_2)}}\right) r^{s+j-1}\sin(r|y|)dr\right|\\
    &\qquad\lesssim \frac{1}{|y|^{s+2(\sigma-\sigma_2)}} + \frac{1}{|y|} \left|\int_{\frac{1}{|y|}}^{\infty} \partial_r\left(\partial_r^{j-k} e^{\lambda_1(r t^{-\frac{1}{2(\sigma-\sigma_2)}})t}\partial_r^{k}\chi_{\rm H}\left(rt^{-\frac{1}{2(\sigma-\sigma_2)}}\right) r^{s+j-1}\right)\cos(r|y|)dr\right|\\
    &\qquad\lesssim \frac{1}{|y|^{s+2(\sigma-\sigma_2)}} + \frac{1}{|y|}\int_{\frac{1}{|y|}}^{\infty} e^{-c_1r{2(\sigma-\sigma_2)}} r^{s+2(\sigma-\sigma_2)-2} dr\\
    &\qquad\lesssim \frac{1}{|y|^{s+2(\sigma-\sigma_2
    )}} + \frac{1}{|y|}\int_{\frac{1}{|y|}}^1 e^{-c_1r{2(\sigma-\sigma_2)}} r^{s+2(\sigma-\sigma_2)-2} dr\\
    &\qquad\lesssim
    \begin{cases}
        \displaystyle\frac{1}{|y|} &\text{ if } s+2(\sigma-\sigma_2) > 1,\\
       \displaystyle \frac{\log(|y|)}{|y|} &\text{ if } s+2(\sigma-\sigma_2) = 1,\\
        \displaystyle\frac{1}{|y|^{s+2(\sigma-\sigma_2)}} &\text{ if } s +2(\sigma-\sigma_2) < 1.
    \end{cases}
\end{align*}
Here we note that $r \sim t^{\frac{1}{2(\sigma-\sigma_2)}}$ on the support of the derivative of $\chi_{\rm H}\left(rt^{-\frac{1}{2(\sigma-\sigma_2)}}\right)$. From here we obtain
\begin{align*}
    |P_{j,k}(t,y)| \lesssim
    \begin{cases}
        \displaystyle\frac{1}{|y|^{n+1}} &\text{ if } s+2(\sigma-\sigma_2) > 1,\\
       \displaystyle \frac{\log(|y|)}{|y|^{n+1}} &\text{ if } s+2(\sigma-\sigma_2) = 1,\\
        \displaystyle\frac{1}{|y|^{n+s+2(\sigma-\sigma_2)}} &\text{ if } s +2(\sigma-\sigma_2) < 1,
    \end{cases}
\end{align*}
for all $j > k > 0$. Now we will consider the case $j = k \geq 1$ with integral
\begin{align*}
    |P_{k,k}(r,y)| =&\frac{1}{|y|^n} \left|\int_0^{\infty} e^{\lambda_1(rt^{-\frac{1}{2(\sigma-\sigma_2)}})t}\partial_r^k\chi_{\rm H}\left(rt^{-\frac{1}{2(\sigma-\sigma_2)}}\right)r^{s+k-1}\sin(r|y|)dr\right|.
\end{align*}
From relations $|y|t^{\frac{1}{2(\sigma-\sigma_2)}} \leq \frac{2}{\epsilon^{*}}$ we have 
\begin{align*}
    |P_{k,k}(r,y)| &\lesssim \left|\int_0^{\frac{1}{|y|}} e^{\lambda_1(rt^{-\frac{1}{2(\sigma-\sigma_2)}})t}\partial_r^k\chi_{\rm H}\left(rt^{-\frac{1}{2(\sigma-\sigma_2)}}\right)r^{s+k-1}\sin(r|y|)dr \right|\\
    &\lesssim
    \begin{cases}
        \displaystyle\frac{1}{|y|^s} &\text{ if } s > 0,\\
        \displaystyle -t^{\frac{a}{2(\sigma-\sigma_2)}}|y|^a\log\left(|y|t^{\frac{1}{2(\sigma-\sigma_2)}}\right) &\text{ if } s = 0.
    \end{cases}
\end{align*}
In the case $s = 0$, we note that $|\sin(\nu)| \leq \nu^a$ for all $\nu \geq 0$ with $0 < a < 1$. From here we obtain
\begin{align*}
    |I_2| 
    &\lesssim
    \begin{cases}
        t^{-\frac{s}{2(\sigma-\sigma_2)}} \displaystyle\int_0^{\infty} \frac{\chi_{\rm H}(|y|)}{|y|^{1+s}} d|y| &\text{ if } s > 0\\
        -\displaystyle \int_0^{\infty} t^{\frac{a}{2(\sigma-\sigma_2)}}\left(\chi_{\rm L}\left(|y|t^{\frac{1}{2(\sigma-\sigma_2)}} \right)+\chi_{\rm M}\left(|y|t^{\frac{1}{2(\sigma-\sigma_2)}}  \right)\right) \frac{\log\left(|y|t^{\frac{1}{2(\sigma-\sigma_2)}}\right)}{|y|^{1-a}} d|y| &\text{ if } s = 0
    \end{cases} \\
    &\lesssim t^{-\frac{s}{2(\sigma-\sigma_2)}}\text{ for all }s\geq 0.
\end{align*}

\textit{Let us consider even spatial dimensions $n = 2m, m \geq 1$.} Carrying out $m -1$ steps of partial integration we re-write (\ref{equation2.1.1}) as follows:
\begin{align}
    J(t,y) &= c_1 \int_0^{\infty} e^{\lambda_1(r t^{-\frac{1}{2(\sigma-\sigma_2)}})t} r^{s+n-1} \chi_{\rm H}\left(rt^{-\frac{1}{2(\sigma-\sigma_2)}}\right) \tilde{\mathcal{J}}_{m-1}(r|y|)dr \notag\\
    &= \frac{c_1}{|y|^{2m-2}} \int_0^{\infty} \mathcal{V}^{m-1} \left(e^{\lambda_1(r t^{-\frac{1}{2(\sigma-\sigma_2)}})t}r^{s+2m-1}\chi_{\rm H}\left(rt^{-\frac{1}{2(\sigma-\sigma_2)}}\right)\right)\mathcal{J}_0(r|y|)dr \notag\\
    &= \displaystyle \sum_{j=0}^{m-1} \frac{a_j}{|y|^{2m-2}}\int_0^{\infty}\partial_r^j\left(e^{\lambda_1(r t^{-\frac{1}{2(\sigma-\sigma_2)}})t}\chi_{\rm H}\left(rt^{-\frac{1}{2(\sigma-\sigma_2)}}\right)\right) r^{s+j+1} \mathcal{J}_0(r|y|)dr \notag\\
    &= \displaystyle \sum_{j=0}^{m-1} \frac{a_j}{|y|^{n}}\int_0^{\infty} \partial_r\left(\partial_r\left(\partial_r^j\left(e^{\lambda_1(rt^{-\frac{1}{2(\sigma-\sigma_2)}})t}\chi_{\rm H}\left(rt^{-\frac{1}{2(\sigma-\sigma_2)}}\right)\right)r^{s+j}\right)r\right) \mathcal{J}_0(r|y|) dr \label{equation2.1.2}\\
     &= \displaystyle \sum_{j=0}^{m-1} \frac{a_j}{|y|^{n}} L_j.\notag
 \end{align}
   The equality (\ref{equation2.1.2}) occurs because we use the first rule of moified Bessel functions for $\mu = 1$ and the fifth rule for $\mu = 0$. Splitting the integral $L_j$ with into two parts, we get the first part
\begin{align*}
     & \left|\int_0^{\frac{1}{|y|}} \partial_r\left(\partial_r\left(\partial_r^j\left(e^{\lambda_1(rt^{-\frac{1}{2(\sigma-\sigma_2)}})t}\chi_{\rm H}\left(rt^{-\frac{1}{2(\sigma-\sigma_2)}}\right)\right)r^{s+j}\right)r\right) \mathcal{J}_0(r|y|) dr\right|\\
     &\qquad\lesssim \left|\int_0^{\frac{1}{|y|}} \partial_r\left(\partial_r\left(\partial_r^j\left(e^{\lambda_1(rt^{-\frac{1}{2(\sigma-\sigma_2)}})t}\chi_{\rm H}\left(rt^{-\frac{1}{2(\sigma-\sigma_2)}}\right)\right)r^{s+j}\right)r\right) (1-\mathcal{J}_0(r|y|)) dr\right|\\
     &\qquad\quad +
     \begin{cases}
         \displaystyle\frac{1}{|y|^s} &\text{ if } s > 0, \\
         \displaystyle\left|\chi_{\rm H}^{(j+1)}\left(|y|^{-1}t^{-\frac{1}{2(\sigma-\sigma_2)}}\right)\right| &\text{ if } s=0, j \geq 0.
     \end{cases}
\end{align*}
Integrals containing the derivative of $e^{\lambda_1(rt^{-\frac{1}{2(\sigma-\sigma_2)}})t}$ are evaluated as the case $n$ were odd. Using the relation $1-J_0(\nu) \leq \nu^a$ with $\nu \in [0,1]$ and a is suitable positive constant in $(0,1)$ we can evaluate the remaining integrals as follows
\begin{align*}
    &\left|\int_0^{\frac{1}{|y|}} e^{\lambda_1(rt^{-\frac{1}{2(\sigma-\sigma_2)}})t} \partial_r^{j+2} \chi_{\rm H}\left(rt^{\frac{-1}{2(\sigma-\sigma_2)}}\right) r^{s+j+1}(1-\mathcal{J}_0(r|y|))dr\right|\\
    &\qquad\lesssim  \int_0^{\frac{1}{|y|}} r^{s-1} \chi_{\rm H}^{(j+2)}\left(rt^{-\frac{1}{2(\sigma-\sigma_2)}}\right)|1-\mathcal{J}_0(r|y|)|dr\\
    &\qquad\lesssim
    \begin{cases}
        \displaystyle\frac{1}{|y|^s} &\text{ if } s > 0,\\
        -|y|^a t^{\frac{a}{2(\sigma-\sigma_2)}} \log\left(|y|t^{\frac{1}{2(\sigma-\sigma_2)}}\right) &\text{ if } s = 0.
    \end{cases}
\end{align*}
On the other hand
\begin{align*}
    &\left|\int_{\frac{1}{|y|}}^1 e^{\lambda_1(rt^{-\frac{1}{2(\sigma-\sigma_2)}})t} \partial_r^{j+2} \chi_{\rm H}\left(rt^{\frac{-1}{2(\sigma-\sigma_2)}}\right) r^{s+j+1}\mathcal{J}_0(r|y|)dr\right|\\
    &\qquad\lesssim \int_{\frac{1}{|y|}}^1 r^{s-1} \chi_{\rm H}^{(j+2)}\left(rt^{-\frac{1}{2(\sigma-\sigma_2)}}\right) |\mathcal{J}_0(r|y|)| dr\\
    &\qquad\lesssim 
    \begin{cases}
        \displaystyle\frac{1}{|y|^s} &\text{ if } s > 0,\\
        -|y|^a t^{\frac{a}{2(\sigma-\sigma_2)}} \log\left(|y|t^{\frac{1}{2(\sigma-\sigma_2)}}\right) &\text{ if } s = 0.
    \end{cases}
\end{align*}

Here we have used the estimate $|\mathcal{J}_0(h)| \lesssim 1$ for $s \in [0,1]$ and $|\mathcal{J}_0(h)| \lesssim h^{-\frac{1}{2}} \leq h^a$ for $h \geq 1$ and $a \in (0,1)$. From the above two estimates and and perform the same proof steps as if n is an odd number, we have
  \begin{equation*}
      |I_2| \lesssim t^{-\frac{s}{2(\sigma-\sigma_2)}}
  \end{equation*}
  where every n is an even natural number. Linking this estimate to (\ref{equation2.1.2}) we obtain
\begin{equation*}\label{estimate2.1.6}
    \left\|\frak{F}^{-1}\left(e^{\lambda_1 t}|\xi|^{s}\chi_{\rm H}(|\xi|)\right)\chi_{\rm L}(|x|) \right\|_{L^1} \lesssim t^{-\frac{s}{2(\sigma-\sigma_2)}},
\end{equation*}
for all $t \in (0, 1]$. Finally, we prove the estimate (\ref{estimate2.1.100}) for $|x| > \frac{1}{\epsilon^{*}}$. We have the following relations:
\begin{align*}
    &\left\|\frak{F}^{-1}\left(e^{\lambda_1(|\xi|)t}|\xi|^s \chi_{\rm H}(|\xi|)\right)\chi_{\rm H}(|x|)\right\|_{L^1}\\
    &\qquad\lesssim \left\|\frak{F}^{-1}\left(e^{\lambda_1(|\xi|)t}|\xi|^s \chi_{\rm H}(|\xi|)\right)\chi_{\rm H}(|x|)\left(\chi_{\rm L}\left(|x|t^{-\frac{1}{2(\sigma-\sigma_2)}}\right)+\chi_{\rm M}\left(|x|t^{-\frac{1}{2(\sigma-\sigma_2)}}\right)\right)\right\|_{L^1}\\
     &\qquad\quad+ \left\|\frak{F}^{-1}\left(e^{\lambda_1(|\xi|)t}|\xi|^s \chi_{\rm H}(|\xi|)\right)\chi_{\rm H}(|x|)\chi_{\rm H}\left(|x|t^{-\frac{1}{2(\sigma-\sigma_2)}}\right)\right\|_{L^1}\\
    &\qquad:= Q_1 + Q_2.
\end{align*}
Using the change of variables $\eta= \xi t^{\frac{1}{2(\sigma-\sigma_2)}}$ and $x= y t^{\frac{1}{2(\sigma-\sigma_2)}}$ we obtain
\begin{align*}
    Q_1 &\lesssim t^{-\frac{s}{2(\sigma-\sigma_2)}} \int_0^{\infty} (\chi_{\rm L}(|y|)+\chi_{\rm M}(|y|)dy \int_{\mathbb{R}^n} e^{-r^{2(\sigma-\sigma_2)}} r^s dr\\
    &\lesssim t^{-\frac{s}{2(\sigma-\sigma_2)}}.
\end{align*}
Now we estimate $Q_2$. Using the modified Bessel functions we get 
\begin{align*}
Q_2 &\lesssim t^{-\frac{s}{2(\sigma-\sigma_2)}}\int_0^{\infty} \chi_{\rm H}\left(|y|t^{\frac{1}{2(\sigma-\sigma_2)}} \right)\chi_{\rm H}(|y|) \\ 
&\qquad\qquad\qquad \times \left|\int_0^{\infty}e^{\lambda_1(r t^{-\frac{1}{2(\sigma-\sigma_2)}})t}r^{s+n-1}\chi_{\rm H}\left(rt^{-\frac{1}{2(\sigma-\sigma_2)}}\right) \cos(r|y|)dr\right| dy.
\end{align*}
\textit{Let us consider odd spatial dimensions $n = 2m+1, m \geq 1$.} Using the vector field $\mathcal{V}(f(r))$ we carry out $m+1$ steps of partial integration to obtain relations (\ref{equation2.1.1}). Carrying out the same proof steps as in the case $|x| \leq \frac{1}{\epsilon^{*}}$ for integrals containing derivatives of $e^{\lambda_1(rt^{-\frac{1}{2(\sigma-\sigma_2)}})t}$ we get the necessary estimates. For this reason we only need to consider integrals
\begin{align*}
    |P_{k,k}(r,y)| =&\frac{1}{|y|^n} \left|\int_0^{\infty} e^{\lambda_1(rt^{-\frac{1}{2(\sigma-\sigma_2)}})t}\partial_r^k\chi_{\rm H}\left(rt^{-\frac{1}{2(\sigma-\sigma_2)}}\right)r^{s+k-1}\sin(r|y|)dr\right|,
\end{align*}
where $k \geq 1$. Perform splitting the integral into two parts. For the first part, using relation $|\sin(\nu)| \leq \nu^{-a}$ for all $a, \nu \in (0,1)$ we obtain
\begin{align*}
   &\left| \int_0^{\frac{1}{|y|}} e^{\lambda_1(rt^{-\frac{1}{2(\sigma-\sigma_2)}})t}\partial_r^k\chi_{\rm H}\left(rt^{-\frac{1}{2(\sigma-\sigma_2)}}\right)r^{s+k-1}\sin(r|y|)dr\right|\\
   &\qquad \lesssim
   \begin{cases}
       \displaystyle\frac{1}{|y|^s} &\text{ if } s > 0,\\
       t^{-\frac{a}{2(\sigma-\sigma_2)}}|y|^{-a} \log\left(|y|t^{\frac{1}{2(\sigma-\sigma_2)}}\right) &\text{ if } s = 0.
   \end{cases}
\end{align*}
In the second part, performing integration by parts we get
\begin{align*}
    &\left|\int_{\frac{1}{|y|}}^1 e^{\lambda_1(rt^{-\frac{1}{2(\sigma-\sigma_2)}})t}\partial_r^k\chi_{\rm H}\left(rt^{-\frac{1}{2(\sigma-\sigma_2)}}\right)r^{s+k-1}\sin(r|y|)dr\right|\\
    &\qquad\lesssim \frac{1}{|y|} \left|\int_\frac{1}{|y|}^1 \partial_r\left(e^{\lambda_1(rt^{-\frac{1}{2(\sigma-\sigma_2)}})t}\partial_r^k\chi_{\rm H}\left(rt^{-\frac{1}{2(\sigma-\sigma_2)}}\right)r^{s+k-1}\right)\cos(r|y|)dr \right|\\
    &\qquad\quad+
    \begin{cases}
        \displaystyle\frac{1}{|y|^s} &\text{ if } s > 0,\\
        \displaystyle\frac{1}{|y|} + \left(|y|t^{\frac{1}{2(\sigma-\sigma_2)}}\right)^{-k
        } \chi_{\rm H}^{(k)}\left(|y|^{-1}t^{-\frac{1}{2(\sigma-\sigma_2)}}\right) &\text{ if } s =0.
    \end{cases}
\end{align*}
Arguing as above, we only need to evaluate the integral that does not contain the derivative of $e^{\lambda_1\left(rt^{-\frac{1}{2(\sigma-\sigma_2)}}t\right)}$, this integral is evaluated as follows:
\begin{align*}
    &\frac{1}{|y|}\left|\int_{\frac{1}{|y|}}^1 e^{\lambda_1(rt^{-\frac{1}{2(\sigma-\sigma_2)}})t}\partial_r^{k+1}\chi_{\rm H}\left(rt^{-\frac{1}{2(\sigma-\sigma_2)}}\right)r^{s+k-1}\cos(r|y|)dr\right|\\
    &\qquad\lesssim
    \begin{cases}
        \displaystyle\frac{1}{|y|^s} &\text{ if } s > 0,\\
        \displaystyle |y|^{-1}t^{-\frac{1}{2(\sigma-\sigma_2)}} &\text{ if } s=0.
    \end{cases}
\end{align*}
From here we have the following estimated conclusion
\begin{align*}
    |P_{k,k}(r,y)| \lesssim
    \begin{cases}
        \displaystyle\frac{1}{|y|^{n+s}} &\text{ if } s > 0,\\
        t^{-\frac{1}{2(\sigma-\sigma_2)}}|y|^{-n-1} + t^{-\frac{a}{2(\sigma-\sigma_2)}}|y|^{-n-a} \log\left(|y|t^{\frac{1}{2(\sigma-\sigma_2)}}\right) \\
        \qquad\qquad\qquad\qquad\qquad\qquad\qquad+ |y|^{-n}\chi_{\rm H}^{(k)}\left(|y|^{-1}t^{-\frac{1}{2(\sigma-\sigma_2)}}\right) &\text{ if } s = 0,
    \end{cases}
\end{align*}
for all $k \geq 1$. Hence we obtain
\begin{align*}
    Q_2 \lesssim t^{-\frac{s}{2(\sigma-\sigma_2)}} \text{ for all } s \geq 0.
\end{align*}
\textit{Let us consider even spatial dimensions $n = 2m, m \geq 1$.} Perform the same proof steps as above, noting that $1-\mathcal{J}_0(\nu) \leq \nu^{-a}$ where $\nu \in (0,1)$ and $a$ is a suitable positive constant in $(0,1)$, we also get the estimate we need. The proof of Lemma \ref{Lemma2.1} is complete.
\end{itemize}
\end{proof}

Following the proof of Lemma 
\ref{Lemma2.1} we may conclude the following $L^1$ estimates, too.
\begin{lemma}\label{Lemma2.2}
    The following estimates hold in $\mathbb{R}^n$ for any $s \geq 0$:
    \begin{align}
        \left\|\frak{F}^{-1}\left(|\xi|^{s}e^{\lambda_1(|\xi|) t}\chi_{\rm L}(|\xi|)\right)(t,\cdot)\right\|_{L^1} \lesssim 
    (1+t)^{-\frac{s}{2(\sigma-\sigma_1)}}\label{estimate2.2.100}
    \end{align}
    for all $n \geq 1$ and $t > 0$.
\end{lemma}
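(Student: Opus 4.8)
The plan is to follow the proof of Lemma~\ref{Lemma2.1} in its entire structure, replacing the high-frequency exponent $2(\sigma-\sigma_2)$ by the low-frequency exponent $2(\sigma-\sigma_1)$ dictated by the relation $\lambda_1\sim-|\xi|^{2(\sigma-\sigma_1)}$ from \eqref{approx1}, and the cut-off $\chi_{\rm H}$ by $\chi_{\rm L}$. First I would establish the low-frequency analogues of Proposition~\ref{proposition2.1} and Corollary~\ref{corollary1}, namely
\[
\left|\partial_\eta^j\!\left(e^{\lambda_1(\eta)}\chi_{\rm L}(\eta)\right)\right|\lesssim e^{-c\eta^{2(\sigma-\sigma_1)}}\,\eta^{2(\sigma-\sigma_1)-j},\qquad j\ge 1,
\]
together with its rescaled version (with $\eta$ replaced by $r\,t^{-\frac{1}{2(\sigma-\sigma_1)}}$). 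These follow by writing $\lambda_1(\eta)=-\eta^{2(\sigma-\sigma_1)}\,w(\eta)$ with $w(\eta)\to 1$ as $\eta\to 0$ and $w$ admitting a convergent expansion in powers of $\eta$, then repeating the Fa\`a di Bruno plus Leibniz computation of Proposition~\ref{proposition2.1}.

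The decisive structural difference from Lemma~\ref{Lemma2.1} is that the decay rate $|\lambda_1|$ vanishes as $\xi\to 0$, so there is no spectral gap and hence no uniform factor $e^{-ct}$ for large times; the polynomial rate $t^{-\frac{s}{2(\sigma-\sigma_1)}}$ must instead come from scaling. For $t\ge 1$ I would therefore change variables via $\eta=t^{\frac{1}{2(\sigma-\sigma_1)}}\xi$ and $y=t^{-\frac{1}{2(\sigma-\sigma_1)}}x$, which yields
\[
\left\|\frak{F}^{-1}\!\left(|\xi|^{s}e^{\lambda_1(|\xi|)t}\chi_{\rm L}(|\xi|)\right)(t,\cdot)\right\|_{L^1}=c\,t^{-\frac{s}{2(\sigma-\sigma_1)}}\,\|G(t,\cdot)\|_{L^1},
\]
where $G(t,y)=\intRn e^{iy\cdot\eta}|\eta|^{s}e^{\lambda_1(t^{-1/(2(\sigma-\sigma_1))}|\eta|)t}\chi_{\rm L}\!\big(t^{-1/(2(\sigma-\sigma_1))}|\eta|\big)\,d\eta$; it then suffices to prove $\|G(t,\cdot)\|_{L^1}\lesssim 1$ uniformly in $t\ge 1$. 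For $|y|\lesssim 1$ I bound $G$ directly, convergence being furnished by the Gaussian-type factor $e^{-c|\eta|^{2(\sigma-\sigma_1)}}$ of the rescaled derivative estimate. For $|y|\gtrsim 1$ I pass to modified Bessel functions and carry out the same $m{+}1$ steps (odd $n=2m+1$) or $m{-}1$ steps (even $n=2m$) of partial integration in $r$ as in Lemma~\ref{Lemma2.1}, splitting each integral at $r=\tfrac{1}{|y|}$; this produces decay $|G(t,y)|\lesssim|y|^{-n-\delta}$ for some $\delta>0$, with the borderline logarithmic cases handled exactly as there, which is integrable in $y$.

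For the remaining range $t\in(0,1]$, where $(1+t)^{-\frac{s}{2(\sigma-\sigma_1)}}\sim 1$, it suffices to prove the uniform bound $\|\frak{F}^{-1}(|\xi|^{s}e^{\lambda_1 t}\chi_{\rm L})\|_{L^1}\lesssim 1$. Here $\chi_{\rm L}$ already confines the symbol to the fixed compact set $\{|\xi|\le\varepsilon^*\}$, and $e^{\lambda_1(|\xi|)t}$ together with all its $\xi$-derivatives is uniformly bounded for $t\in(0,1]$; applying the modified Bessel representation and the same integration-by-parts scheme directly, now \emph{without} rescaling, gives a uniform bound for $|x|\lesssim 1$ and decay $|F(t,x)|\lesssim|x|^{-n-\delta}$ for $|x|\gtrsim 1$, whence the $L^1$ bound. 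Combining the two regimes yields \eqref{estimate2.2.100}.

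I expect the main obstacle to be, exactly as in Lemma~\ref{Lemma2.1}, the uniform control of the oscillatory integral $G(t,y)$ for large $|y|$: one must track the boundary terms generated by repeated integration by parts, treat the logarithmic borderline thresholds (e.g.\ $s+2(\sigma-\sigma_1)$ near the critical value and the $s=0$ case, where the elementary inequalities $|\sin\nu|\le\nu^{a}$ and $1-\mathcal{J}_0(\nu)\le\nu^{a}$ with $a\in(0,1)$ are invoked), and, crucially, verify that the \emph{growing} support $\{|\eta|\le\varepsilon^* t^{1/(2(\sigma-\sigma_1))}\}$ of the rescaled cut-off does not destroy the uniform-in-$t$ bound. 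This last point is precisely where the Gaussian-type decay $e^{-c|\eta|^{2(\sigma-\sigma_1)}}$ is essential, since it makes the effective integration region $|\eta|\lesssim 1$ independent of $t$ and simultaneously localizes the cut-off derivatives to $r\sim\varepsilon^* t^{1/(2(\sigma-\sigma_1))}$, where the exponential is negligible.
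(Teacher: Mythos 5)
Your proposal is correct and follows essentially the same route as the paper: the low-frequency analogues of Proposition \ref{proposition2.1} and Corollary \ref{corollary1} (which the paper states as Proposition \ref{proposition2.2} and the subsequent corollary, obtained by writing $\lambda_1(\eta)=-\eta^{2(\sigma-\sigma_1)}w(\eta)$ and repeating the Fa\`a di Bruno/Leibniz computation), a uniform bound for $t\in(0,1]$ via the modified Bessel representation and integration by parts, and for $t\ge 1$ the rescaling $\xi= t^{-\frac{1}{2(\sigma-\sigma_1)}}\zeta$, $y= x\,t^{\frac{1}{2(\sigma-\sigma_1)}}$ to extract the factor $t^{-\frac{s}{2(\sigma-\sigma_1)}}$ followed by the same oscillatory-integral machinery as in Lemma \ref{Lemma2.1}. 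The only cosmetic difference is that the paper splits physical space with the rescaled cut-offs before changing variables, whereas you rescale first and then split at $|y|\sim 1$, which is the same decomposition.
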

Before proving Lemma \ref{Lemma2.2}, we need to prove the following auxiliary proposition:
\begin{proposition}\label{proposition2.2}
    The following estimate hold:
    \begin{align*}
        \left|\partial_{\eta}^je^{\lambda_1(\eta)}\right|\chi_{\rm L}(|\eta|) \lesssim \eta^{2(\sigma-\sigma_1)-j}
    \end{align*}
    for all $j \geq 1$.
\end{proposition}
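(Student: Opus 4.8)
The plan is to follow the template of Proposition \ref{proposition2.1}, exchanging the roles of $\sigma_1$ and $\sigma_2$ and working near $\eta=0$ rather than near infinity. First I would factor the characteristic root in the low-frequency regime by pulling out $\eta^{2\sigma_1}$ from the polynomial part and rationalizing the difference of the square root against the linear term, which yields
\begin{equation*}
\lambda_1(\eta) = -\eta^{2(\sigma-\sigma_1)}\left(\frac{1}{1+\eta^{2(\sigma_2-\sigma_1)}}\cdot\frac{1}{\frac12+\frac12\sqrt{1-\frac{4\eta^{2(\sigma-2\sigma_1)}}{(1+\eta^{2(\sigma_2-\sigma_1)})^2}}}\right) =: u(\eta)\,v(\eta),
\end{equation*}
with $u(\eta)=-\eta^{2(\sigma-\sigma_1)}$. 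The decisive difference from the high-frequency case is that, because $\sigma_1<\sigma/2<\sigma_2$, the exponents $2(\sigma_2-\sigma_1)$ and $2(\sigma-2\sigma_1)$ are both \emph{positive}; hence on $\operatorname{supp}\chi_{\rm L}\subset[0,\varepsilon^*]$ the factor $v(\eta)$ is smooth with a Maclaurin expansion in nonnegative powers of $\eta$,
\begin{equation*}
v(\eta) = 1 + c_1\eta^{2(\sigma-2\sigma_1)} + c_2\eta^{2(\sigma_2-\sigma_1)} + \cdots,
\end{equation*}
rather than the negative powers appearing in $A(\eta)$ before.

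From this expansion I would read off the key derivative bound $|v^{(k)}(\eta)|\lesssim \eta^{-k}$ for all $k\ge 0$ on $\operatorname{supp}\chi_{\rm L}$: each summand $\eta^{\alpha}$ has $\alpha>0$, so $(\eta^{\alpha})^{(k)}\sim\eta^{\alpha-k}$ and, since $\eta^{\alpha}$ stays bounded for $\eta\le\varepsilon^*$, this is $\lesssim\eta^{-k}$; the constant term contributes only at $k=0$. Combining this with Leibniz's rule and $u^{(i-k)}(\eta)\sim\eta^{2(\sigma-\sigma_1)-i+k}$ gives
\begin{equation*}
|\lambda_1^{(i)}(\eta)| = \Big|\sum_{k=0}^{i}\binom{i}{k}u^{(i-k)}(\eta)v^{(k)}(\eta)\Big|\lesssim \eta^{2(\sigma-\sigma_1)-i},\qquad i\ge 0,
\end{equation*}
exactly as in the proof of Proposition \ref{proposition2.1}.

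Finally I would apply Fa\`{a} di Bruno's formula, $\partial_\eta^j e^{\lambda_1(\eta)} = \sum a_{\mathbf m}\, e^{\lambda_1(\eta)}\prod_{i=1}^{j}\big(\lambda_1^{(i)}(\eta)\big)^{m_i}$ with $\sum_{i} i\,m_i = j$. On $\operatorname{supp}\chi_{\rm L}$ one has $\lambda_1(\eta)\to 0$ as $\eta\to 0$, so $e^{\lambda_1(\eta)}\sim 1$ and, unlike the high-frequency statement, no exponential decay factor survives. Inserting the bound above,
\begin{equation*}
\prod_{i=1}^{j}\big(\lambda_1^{(i)}(\eta)\big)^{m_i}\lesssim \eta^{2(\sigma-\sigma_1)M-j},\qquad M:=\sum_{i=1}^{j}m_i\ge 1,
\end{equation*}
and here is the one point that must be handled with care: for small $\eta$ and $M\ge 1$ one has $\eta^{2(\sigma-\sigma_1)(M-1)}\le 1$, so every term is dominated by the $M=1$ contribution, which is precisely $\eta^{2(\sigma-\sigma_1)-j}$. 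This is the reverse of the high-frequency bookkeeping, where large $\eta$ forced the opposite monotonicity, and it is the main thing to get right. Summing the finitely many admissible multi-indices then yields $|\partial_\eta^j e^{\lambda_1(\eta)}|\,\chi_{\rm L}(|\eta|)\lesssim \eta^{2(\sigma-\sigma_1)-j}$ for all $j\ge 1$, completing the proof.
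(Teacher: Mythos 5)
Your proposal is correct and follows essentially the same route as the paper, whose proof consists precisely of invoking the argument of Proposition \ref{proposition2.1} with the low-frequency factorization $\lambda_1(\eta)=-\eta^{2(\sigma-\sigma_1)}u(\eta)v(\eta)$ that you wrote down. In fact you supply details the paper leaves implicit, notably the observation that for $\eta\le\varepsilon^*$ the terms with $\sum_i m_i\ge 2$ in Fa\`a di Bruno's formula are dominated by the $\sum_i m_i=1$ term (the reverse of the high-frequency bookkeeping) and that $e^{\lambda_1(\eta)}\lesssim 1$ replaces the exponential decay factor.
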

\begin{proof}
    Perform the same proof steps in Proposition \ref{proposition2.1} with note that
    \begin{align*}
        \lambda_1(\eta) = -\eta^{2(\sigma-\sigma_1)}\left(\frac{1}{1+\eta^{2(\sigma_2-\sigma_1)}}\frac{1}{\frac{1}{2}+\frac{1}{2}\sqrt{1-\frac{4\eta^{2(\sigma-2\sigma_1)}}{\left(1+\eta^{2(\sigma_2-\sigma_1)}\right)^2}}}\right),
    \end{align*}
    we have the conclusion of Proposition \ref{proposition2.2}.
\end{proof}
Similar to Corollary \ref{corollary1}, we also get the following estimate.
\begin{corollary}
    The following estimate hold:
    \begin{align*}
        \left|\partial_r^je^{\lambda_1(rt^{-\frac{1}{2(\sigma-\sigma_1)}})t}\right| \chi_{\rm L}\left(rt^{-\frac{1}{2(\sigma-\sigma_1)}}\right) \lesssim e^{-cr^{2(\sigma-\sigma_1)}}r^{2(\sigma-\sigma_1)-j},
    \end{align*}
    for all $j \geq 1$ and $c$ is suitable positive constant.
\end{corollary}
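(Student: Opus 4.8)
The plan is to repeat almost verbatim the argument proving Corollary \ref{corollary1}, simply exchanging the roles of the two damping exponents: everywhere $\sigma_2$ appears one puts $\sigma_1$, the cut-off $\chi_{\rm H}$ is replaced by $\chi_{\rm L}$, and Proposition \ref{proposition2.1} is replaced by Proposition \ref{proposition2.2}. First I would record the scaling identity coming from the factorization $\lambda_1(\eta)=-\eta^{2(\sigma-\sigma_1)}v(\eta)$ used in Proposition \ref{proposition2.2}, namely
\begin{equation*}
\lambda_1\!\left(rt^{-\frac{1}{2(\sigma-\sigma_1)}}\right)t=-r^{2(\sigma-\sigma_1)}\,v\!\left(rt^{-\frac{1}{2(\sigma-\sigma_1)}}\right),
\end{equation*}
which holds because under the substitution $\eta=rt^{-1/(2(\sigma-\sigma_1))}$ one has $\eta^{2(\sigma-\sigma_1)}t=r^{2(\sigma-\sigma_1)}$.

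Second, I would transfer the derivative estimates of $v$ to the rescaled variable. From the Maclaurin expansion established inside Proposition \ref{proposition2.2} one has $|v^{(k)}(\eta)|\lesssim \eta^{-k}$ for all $k\ge 0$; hence, by the chain rule, $\big(v(rt^{-1/(2(\sigma-\sigma_1))})\big)^{(k)}=t^{-k/(2(\sigma-\sigma_1))}\,v^{(k)}(rt^{-1/(2(\sigma-\sigma_1))})\lesssim r^{-k}$, the powers of $t$ cancelling exactly. Writing $g(r):=-r^{2(\sigma-\sigma_1)}v(rt^{-1/(2(\sigma-\sigma_1))})$ for the rescaled exponent and applying Leibniz's rule exactly as in Proposition \ref{proposition2.1}, this yields $|g^{(i)}(r)|\lesssim r^{2(\sigma-\sigma_1)-i}$ for every $i\ge 0$.

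Third, I would feed these bounds into Fa\`a di Bruno's formula \eqref{equation.pro2.1.1}: since $\partial_r^j e^{g(r)}=\sum a_{\mathbf m}\,e^{g(r)}\prod_{i=1}^{j}\big(g^{(i)}(r)\big)^{m_i}$ with $\sum_i i\,m_i=j$, each product is controlled by $r^{2(\sigma-\sigma_1)M-j}$ where $M=\sum_i m_i\ge 1$. On the support of $\chi_{\rm L}(rt^{-1/(2(\sigma-\sigma_1))})$ the argument $rt^{-1/(2(\sigma-\sigma_1))}$ stays below $\varepsilon^*$, so $v$ is bounded from below by a positive constant and $g(r)\le -c\,r^{2(\sigma-\sigma_1)}$, giving $e^{g(r)}\lesssim e^{-c r^{2(\sigma-\sigma_1)}}$. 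Absorbing the surplus factor $r^{2(\sigma-\sigma_1)(M-1)}$ into the exponential at the price of a slightly smaller constant then produces exactly $e^{-cr^{2(\sigma-\sigma_1)}}r^{2(\sigma-\sigma_1)-j}$, and multiplying by $\chi_{\rm L}\le 1$ closes the estimate.

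The bookkeeping is entirely routine once the factorization is fixed; the only genuinely substantive point — and the one I would verify with care — is the lower bound $v\ge c>0$ on the support of $\chi_{\rm L}(rt^{-1/(2(\sigma-\sigma_1))})$. This is precisely what upgrades the merely polynomial bound of Proposition \ref{proposition2.2} (valid for the \emph{unscaled}, genuinely small frequencies, where no exponential gain is available) into the Gaussian-type decay $e^{-cr^{2(\sigma-\sigma_1)}}$ after rescaling, mirroring how the exponential factor arises in Corollary \ref{corollary1}.
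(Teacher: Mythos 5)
Your proposal is correct and follows essentially the same route as the paper: the paper proves this corollary by citing the argument of Corollary \ref{corollary1} verbatim with $\sigma_2$, $\chi_{\rm H}$, Proposition \ref{proposition2.1} replaced by $\sigma_1$, $\chi_{\rm L}$, Proposition \ref{proposition2.2} — precisely the scaling identity, the cancellation of the $t$-powers in the derivatives of the rescaled $v$, and Fa\`a di Bruno's formula that you describe. Your explicit verification that $v$ stays bounded below by a positive constant on the support of $\chi_{\rm L}\left(rt^{-\frac{1}{2(\sigma-\sigma_1)}}\right)$ (so that $e^{g(r)}\lesssim e^{-cr^{2(\sigma-\sigma_1)}}$, with the surplus polynomial factors absorbed at the cost of shrinking $c$) is a detail the paper leaves implicit, but it is the same mechanism, not a different proof.
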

\begin{proof}[Proof of Lemma \ref{Lemma2.2}]
    In the case $|x| \leq \frac{1}{\epsilon^{*}}$, then we have immediately
    \begin{align*}
        \left|\frak{F}^{-1}\left(|\xi|^{s}e^{\lambda_1(|\xi|)t}(\chi_{\rm L}(|\xi|)+\chi_{\rm M}(|x|))\right)\chi_{\rm L}(|x|)\right|_{L^1} &\lesssim \int_{\mathbb{R}^n}\chi_{\rm L}(|x|)dx \int_{\mathbb{R}^n} |\xi|^{s} e^{-|\xi|^{2(\sigma-\sigma_1)}t} \chi_{\rm L}(|\xi|) d\xi \\
        &\lesssim 
        (1+t)^{-\frac{s}{2(\sigma-\sigma_1)}}
    \end{align*}
    for all $t > 0$.
    For this reason, it suffices to prove Lemma \ref{Lemma2.2} in the remaining case $|x| > \frac{1}{\epsilon^{*}}$. Let us separate our consideration into two subcases as follows:
    \begin{itemize}[leftmargin=*]
    \item[$\bullet$] \textbf{At first we consider the case $t \in (0,1]$}. Let us introduce the function
    \begin{equation*}
        G(t,x) := \frak{F}^{-1}\left(e^{\lambda_1(|\xi|)t}|\xi|^{s}\chi_{\rm L}(|\xi|)\right)(t,x).
    \end{equation*}
    Using the radial symmetry of the integrand and modified Bessel functions we have to consider
    \begin{equation}\label{equation2.2.1}
        G(t,x):= a^{*} \int_0^{\infty} e^{\lambda_1(r)t} r^{s+n-1} \chi_{\rm L}(r) \tilde{\mathcal{J}}_{\frac{n}{2}-1}(r|x|)dr.
    \end{equation}
   \textit{Let us consider odd spatial dimensions $n= 2m+1, m \geq 1$.} Again, using the vector field $\mathcal{V}(f(r))$ we carry out $m+1$ steps of partial integration to obtain
   \begin{align*}
       G(t,x) =& -\frac{a^{*}}{|x|^n} \int_0^{\infty} \partial_r\left(\mathcal{V}^m\left(e^{\lambda_1(r)t}r^{s+2m}\chi_{\rm L}(r)\right)\right) \sin(r|x|)dr\\
       =& \displaystyle \sum_{j=0}^m \displaystyle \sum_{k=0}^{j+1} \frac{a_{jk}}{|x|^n} \int_0^{\infty} \partial_r^{j+1-k} e^{\lambda_1(r)t} \partial_r^k(\chi_{\rm L}(r))r^{s+j} \sin(r|x|)dr\\
       &+ \displaystyle \sum_{j=0}^m \displaystyle \sum_{k=0}^j \frac{a_{jk}}{|x|^n} \int_0^{\infty} \partial_r^{j-k} e^{\lambda_1(r)t} \partial_r^{k+1}(\chi_{\rm L}(r)) r^{s+j} \sin(r|x|)dr \\
       &+ \displaystyle \sum_{j=0}^m \displaystyle \sum_{k=0}^j \frac{a_{jk}}{|x|^n} \int_0^{\infty} \partial_r^{j-k} e^{\lambda_1(r)t}\partial_r^k(\chi_{\rm L}(r))r^{s+j-1} \sin(r|x|)dr
   \end{align*}
  with some constants $a_{jk}$. For this reason, we only need to study the integrals
  \begin{align*}
      G_{j,k} &:= \frac{1}{|x|^n} \int_0^{\infty} \partial_r^{j+1-k} e^{\lambda_1(r)t} \partial_r^k(\chi_{\rm L}(r))r^{s+j}\sin(r|x|)dr.
  \end{align*}
Namely, one gets
 \begin{align*}
     |G_{j,k}| \lesssim& \frac{1}{|x|^{n+1}}\left|\partial_r^{j+1-k} e^{\lambda_1(r)t} \partial_r^k(\chi_{\rm L}(r))r^{s+j}\cos(r|x|)\right|_{r=0}^{\infty}\\
     &+ \frac{1}{|x|^{n+1}}\int_0^{\infty} \partial_r\left(\partial_r^{j+1-k} e^{\lambda_1(r)t} \partial_r^k(\chi_{\rm L}(r))r^{s+j}\right) \cos(r|x|) dr\\
     \lesssim& \frac{1}{|x|^{n+1}},
 \end{align*}
 for all $t > 0$ and $k \geq 1$. Now we consider the case $k = 0$. By splitting the integral $G_{j,0}$ into two parts, on the one hand we obtain the following estimate:
 \begin{align*}
     \frac{1}{|x|^n} \left|\int_0^{\frac{1}{|x|}} \partial_r^{j+1} e^{\lambda_1(r)t}\chi_{\rm L}(r)r^{s+j}\sin(r|x|)dr\right| &\lesssim \frac{1}{|x|^n} \int_0^{\frac{1}{|x|}} r^{2(\sigma-\sigma_1)+s-1} dr\\
     &\lesssim \frac{1}{|x|^{n+2(\sigma-\sigma_1)+s}}.
 \end{align*}
 On the other hand, carrying out one more step of partial integration we derive
 \begin{align*}
     \frac{1}{|x|^n}\left|\int_{\frac{1}{|x|}}^{\infty} \partial_r^{j+1} e^{\lambda_1(r)t}\chi_{\rm L}(r)r^{s+j} \sin(r|x|)dr\right| \lesssim& \frac{1}{|x|^{n+1}}\left|\partial_r^{j+1} e^{\lambda_1(r)t}\chi_{\rm L}(r)r^{s+j} \cos(r|x|)\right|_{r=\frac{1}{|x|}}\\
     &+ \frac{1}{|x|^{n+1}} \left|\int_{\frac{1}{|x|}}^{\infty}\partial_r\left(\partial_r^{j+1} e^{\lambda_1(r)t}\chi_{\rm L}(r)r^{s+j} \right)\cos(r|x|)dr\right| \\
     \lesssim& \frac{1}{|x|^{n+1}}. 
 \end{align*}
Thus, it follows that
 \begin{equation}\label{equation2.2.2}
     \left\|\frak{F}^{-1}\left(e^{\lambda_1(|\xi|)t}|\xi|^{s}\chi_{\rm L}(|\xi|)\right)(\chi_{\rm M}(|x|)+\chi_{\rm H}(|x|))\right\|_{L^1} \lesssim 1,
 \end{equation}
 for all $t \in (0,1]$ and $n$ is an odd natural number.\\
 \textit{Let us consider even spatial dimensions $n= 2m, m \geq 1$.} Carrying out $m-1$ steps of partial integration we re-write (\ref{equation2.2.1}) as follows:
 \begin{align*}
     G(t,x) =& \frac{a^{*
     }}{|x|^{2m-2}} \int_0^{\infty} \mathcal{V}^{m-1} \left(e^{\lambda_1(r)t}r^{s+2m-1}\chi_{\rm L}(r)\right) J_0(r|x|)dr \\
     =& \displaystyle \sum_{j=0}^{m-1} \frac{a_j}{|x|^{2m-2}} \int_0^{\infty} \partial_r^j\left(e^{\lambda_1(r)t}r^{s}\chi_{\rm L}(r)\right)r^{j+1}J_0(r|x|) dr \\
     :=& \displaystyle \sum_{j=0}^{m-1}a_jG_j(t,x).
 \end{align*}
 Using the first rule of modified Bessel functions for $\mu = 1$ and the fifth rule for $\mu = 0$ and the performing two more steps of partial integration we get
 \begin{align*}
     G_j(t,x) = -\frac{1}{|x|^{n}} \int_0^{\infty} \partial_r\left(\partial_r\left(\partial_r^j\left(e^{\lambda_1(r)t}r^{s}\chi_{\rm L}(r)\right)r^j\right)r\right)J_0(r|x|)dr.
 \end{align*}
 By splitting the integral $G_j$ into two
parts, on the one hand we obtain the following estiamate
 \begin{align*}
    \frac{1}{|x|^{n}}\left| \int_0^{\frac{1}{|x|}}  \partial_r\left(\partial_r\left(\partial_r^j\left(e^{\lambda_1(r)t}r^{s}\chi_{\rm L}(r)\right)r^j\right)r\right)J_0(r|x|) dr\right| &\lesssim \frac{1}{|x|^{n}} \int_0^{\frac{1}{|x|}} r^{2(\sigma-\sigma_1)+s+j-1} dr\\
    &\lesssim \frac{1}{|x|^{n+2(\sigma-\sigma_1)+s+j}}.
 \end{align*}
 On the other hand, carrying out one more step of partial integration we derive
 \begin{align*}
     &\frac{1}{|x|^{n}} \left|\int_{\frac{1}{|x|}}^{\infty} \partial_r\left(\partial_r\left(\partial_r^j\left(e^{\lambda_1(r)t}r^{s}\chi_{\rm L}(r)\right)r^j\right)r\right)J_0(r|x|) dr\right| \\
     &\qquad \lesssim \frac{1}{|x|^{n+\frac{1}{2}}} \int_{\frac{1}{|x|}}^{2a} r^{2(\sigma-\sigma_1)+s+j-\frac{3}{2}} dr \\
     &\qquad \lesssim
     \begin{cases}
         \f{1}{|x|^{n+\frac{1}{2}}} &\text{ if } 2(\sigma-\sigma_1)+s+j > \frac{1}{2},\\
         \f{\log(|x|)}{|x|^{n+\frac{1}{2}}} &\text{ if } 2(\sigma-\sigma_1) +s+j = \frac{1}{2},\\
         \f{1}{|x|^{n+2(\sigma-\sigma_1)+s+j}} &\text{ if } 2(\sigma-\sigma_1) +s+j < \frac{1}{2}.
     \end{cases}
 \end{align*}
From here, we have the following conclusion:
 \begin{equation}\label{equation2.2.3}
      \left\|\frak{F}^{-1}\left(e^{\lambda_1(|\xi|)t}|\xi|^{s}\chi_{\rm L}(|\xi|)\right)(\chi_{\rm M}(|x|)+\chi_{\rm H}(|x|))\right\|_{L^1} \lesssim 1,
 \end{equation}
 for alln is an even natural number other than $0$ and $t \in (0,1]$. Finally, the combination of (\ref{equation2.2.2}) and (\ref{equation2.2.3}) imply
 \begin{equation*}
     \left\|\frak{F}^{-1}\left(e^{\lambda_1(|\xi|)t}|\xi|^{s}\chi_{\rm L}(|\xi|)\right)(\chi_{\rm M}(|x|)+\chi_{\rm H}(|x|))\right\|_{L^1} \lesssim 1\quad  \text{ for all }t \in (0,1].\\
 \end{equation*}
 \item[$\bullet$] \textbf{Now we consider the case $t \geq 1$}. We perform the separation technique as follows:
 \begin{align*}
     &\left\|\frak{F}^{-1}\left(e^{\lambda_1(|\xi|)t}|\xi|^s\chi_{\rm L}(|\xi|)\right)\left(\chi_{\rm M}(|x|)+\chi_{\rm H}(|x|)\right)\right\|_{L^1}\\
     &\qquad \leq \left\|\frak{F}^{-1}\left(e^{\lambda_1(|\xi|)t}|\xi|^s\chi_{\rm L}(|\xi|)\right)\left(\chi_{\rm M}(|x|)+\chi_{\rm H}(|x|)\right)\chi_{\rm L}\left(|x|t^{\frac{1}{2(\sigma-\sigma_1)}}\right)\right\|_{L^1}\\
     &\qquad\quad+ \left\|\frak{F}^{-1}\left(e^{\lambda_1(|\xi|)t}|\xi|^s\chi_{\rm L}(|\xi|)\right)\left(\chi_{\rm M}(|x|)+\chi_{\rm H}(|x|)\right)\left(\chi_{\rm M}\left(|x|t^{\frac{1}{2(\sigma-\sigma_1)}}\right)+\chi_{\rm H}\left(|x|t^{\frac{1}{2(\sigma-\sigma_1)}}\right)\right)\right\|_{L^1}.
 \end{align*}
  By the change of variables $\xi = t^{-\frac{1}{2(\sigma-\sigma_1)}}\zeta$ and $y = x t^{\frac{1}{2(\sigma-\sigma_1)}}$ then performing the same proof steps as Lemma \ref{Lemma2.1}, we get the estimate (\ref{estimate2.2.100}).

\end{itemize}
Summarizing, the proof to Lemma \ref{Lemma2.2} is completed.
\end{proof}
\begin{remark}\label{remark1}
    Since the real part of $\lambda_{1,2}$ is negative, we can perform the same proof steps as Lemma \ref{Lemma2.1}, we also get the following estimate:
    \begin{align*}
        \left\|\frak{F}^{-1}\left(|\xi|^{s}e^{\lambda_1(|\xi|) t}\chi_{\rm M}(|\xi|)\right)(t,\cdot)\right\|_{L^1} \lesssim e^{-ct},
    \end{align*}
    for all $t > 0$ and $c$ is suitable positive constant. 
\end{remark} 
Then, from the statements of Lemma \ref{Lemma2.1} and \ref{Lemma2.2}, we may conclude the following $L^1$ estimates.
\begin{lemma}\label{lemma2.3}
The following estimates hold: 
\begin{align*}
    \left\|\frak{F}^{-1}\left(|\xi|^{s}e^{\lambda_1(|\xi|) t}\right)(t,\cdot)\right\|_{L^1} \lesssim 
    \begin{cases}
        t^{-\frac{s}{2(\sigma-\sigma_2)}} &\text{ if } t \in (0,1],\\
        t^{-\frac{s}{2(\sigma-\sigma_1)}} &\text{ if } t \in [1, \infty).
    \end{cases}
\end{align*}

\end{lemma}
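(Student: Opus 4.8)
The plan is to exploit the partition of unity $1=\chi_{\rm L}(|\xi|)+\chi_{\rm M}(|\xi|)+\chi_{\rm H}(|\xi|)$, which yields
\[
|\xi|^{s}e^{\lambda_1(|\xi|)t}=|\xi|^{s}e^{\lambda_1(|\xi|)t}\chi_{\rm L}(|\xi|)+|\xi|^{s}e^{\lambda_1(|\xi|)t}\chi_{\rm M}(|\xi|)+|\xi|^{s}e^{\lambda_1(|\xi|)t}\chi_{\rm H}(|\xi|).
\]
By linearity of $\frak{F}^{-1}$ and the triangle inequality in $L^1$, it suffices to bound each of the three pieces separately and then combine them according to the time regime: the low-frequency piece is controlled by Lemma \ref{Lemma2.2}, the middle-frequency piece by Remark \ref{remark1}, and the high-frequency piece by Lemma \ref{Lemma2.1}.

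For $t\in(0,1]$ I would observe that, since $0<\sigma_1<\sigma_2<\sigma$, one has $\sigma-\sigma_2<\sigma-\sigma_1$ and hence $\tfrac{s}{2(\sigma-\sigma_2)}\ge\tfrac{s}{2(\sigma-\sigma_1)}\ge0$. On this interval the low-frequency bound $(1+t)^{-s/(2(\sigma-\sigma_1))}$ and the exponential bound $e^{-ct}$ coming from Remark \ref{remark1} are both $\lesssim1\le t^{-s/(2(\sigma-\sigma_2))}$, because $t^{-s/(2(\sigma-\sigma_2))}\ge1$ whenever $t\le1$ and $s\ge0$. Thus the high-frequency contribution from Lemma \ref{Lemma2.1} is the dominant one, and the sum is $\lesssim t^{-s/(2(\sigma-\sigma_2))}$, which is the first case.

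For $t\in[1,\infty)$ the high- and middle-frequency pieces decay like $e^{-ct}$, whereas the low-frequency piece decays like $(1+t)^{-s/(2(\sigma-\sigma_1))}\sim t^{-s/(2(\sigma-\sigma_1))}$. Since exponential decay dominates any polynomial decay, $e^{-ct}\lesssim t^{-s/(2(\sigma-\sigma_1))}$ for $t\ge1$, so the low-frequency term is now dominant and the sum is $\lesssim t^{-s/(2(\sigma-\sigma_1))}$. Combining the two regimes yields precisely the claimed piecewise estimate.

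Because all of the genuine analytic difficulty---the oscillating-integral and modified-Bessel-function estimates---has already been absorbed into Lemmas \ref{Lemma2.1} and \ref{Lemma2.2}, I do not expect any real obstacle here. The only point requiring care is the elementary comparison of the three competing decay rates on each of the two time intervals; this comparison is exactly where the competition between the parabolic-like damping (effective rate governed by $\sigma-\sigma_1$, dominant for large times) and the $\sigma$-evolution-like damping (effective rate governed by $\sigma-\sigma_2$, dominant for small times) becomes visible.
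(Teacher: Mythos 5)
Your proposal is correct and is exactly the paper's own argument: the paper deduces Lemma \ref{lemma2.3} directly from Lemma \ref{Lemma2.1}, Lemma \ref{Lemma2.2} and Remark \ref{remark1} via this same frequency decomposition, leaving the rate comparison implicit. Your explicit comparison of the three contributions on each time interval (low-frequency term dominant for $t\ge 1$, high-frequency term dominant for $t\le 1$, exponential terms always absorbed) is the correct way to fill in that step.
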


\begin{lemma}\label{lemma2.4}
    The following estimate holds:
    \begin{align*}
        \left\|\frak{F}^{-1}\left(e^{(\lambda_2-\lambda_1)(|\xi|)t}\right)(t,\cdot)\right\|_{L^1} \lesssim 1\quad   \text{ for all } t > 0.
    \end{align*}
\end{lemma}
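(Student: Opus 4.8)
The plan is to localize the multiplier $e^{(\lambda_2-\lambda_1)(|\xi|)t}$ to low, middle and high frequencies through the cut-offs $\chi_{\rm L},\chi_{\rm M},\chi_{\rm H}$ and to treat each piece exactly in the spirit of Lemmas \ref{Lemma2.1} and \ref{Lemma2.2}. The guiding observation is that, by \eqref{approx1}--\eqref{approx2}, the root difference $\lambda_1-\lambda_2=\sqrt{(|\xi|^{2\sigma_1}+|\xi|^{2\sigma_2})^2-4|\xi|^{2\sigma}}$ behaves like $|\xi|^{2\sigma_1}$ for $|\xi|\le\varepsilon^*$ and like $|\xi|^{2\sigma_2}$ for $|\xi|\ge 1/\varepsilon^*$; hence $\lambda_2-\lambda_1=-(\lambda_1-\lambda_2)$ is a real, negative, smooth symbol on the two outer zones, of the very same type as the symbol $\lambda_1$ handled before, with $2\sigma_1$ and $2\sigma_2$ now playing the roles of $2(\sigma-\sigma_1)$ and $2(\sigma-\sigma_2)$. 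Since the statement carries no weight $|\xi|^s$, I only ever need the case $s=0$ of the earlier machinery, which is precisely the borderline giving a bound by a constant.

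For the low-frequency piece I would first prove the analogue of Proposition \ref{proposition2.2}, namely $\big|\partial_\eta^j e^{(\lambda_2-\lambda_1)(\eta)}\big|\chi_{\rm L}(\eta)\lesssim \eta^{2\sigma_1-j}$ for all $j\ge 1$. This follows by writing $\lambda_2-\lambda_1=-\eta^{2\sigma_1}w(\eta)$, where $w$ admits a convergent Maclaurin-type expansion in the powers $\eta^{2(\sigma_2-\sigma_1)}$ and $\eta^{2(\sigma-2\sigma_1)}$ near $\eta=0$, and by applying Fa\`a di Bruno's formula as in Proposition \ref{proposition2.2}. Plugging this estimate into the modified-Bessel representation and repeating the partial-integration scheme of Lemma \ref{Lemma2.2} with $s=0$ yields $\big\|\mathfrak{F}^{-1}\big(e^{(\lambda_2-\lambda_1)t}\chi_{\rm L}\big)\big\|_{L^1}\lesssim (1+t)^{0}=1$, uniformly in $t>0$.

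For the high-frequency piece I would establish the analogue of Proposition \ref{proposition2.1}, namely $\big|\partial_\eta^j\big(e^{(\lambda_2-\lambda_1)(\eta)}\chi_{\rm H}(\eta)\big)\big|\lesssim e^{-c\eta^{2\sigma_2}}\eta^{2\sigma_2-j}$, the exponential factor being furnished by $\operatorname{Re}(\lambda_2-\lambda_1)\sim -|\xi|^{2\sigma_2}$ at high frequencies. Running the odd- and even-dimensional partial-integration argument of Lemma \ref{Lemma2.1} with $s=0$, together with Corollary \ref{corollary1} in its rescaled form, then gives $\big\|\mathfrak{F}^{-1}\big(e^{(\lambda_2-\lambda_1)t}\chi_{\rm H}\big)\big\|_{L^1}\lesssim 1$ for $t\in(0,1]$ and $\lesssim e^{-ct}$ for $t\ge 1$, hence a constant bound throughout.

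The main obstacle is the middle zone. On $\operatorname{supp}\chi_{\rm M}\setminus\Omega$ the root difference is real and bounded away from zero, so $e^{(\lambda_2-\lambda_1)t}$ decays exponentially and Remark \ref{remark1} applies verbatim. On $\Omega$, however, the two roots are complex conjugates, so $\operatorname{Re}(\lambda_2-\lambda_1)=0$ and $e^{(\lambda_2-\lambda_1)t}=e^{-it\sqrt{4|\xi|^{2\sigma}-(|\xi|^{2\sigma_1}+|\xi|^{2\sigma_2})^2}}$ has modulus one: no exponential decay is available and the argument of Remark \ref{remark1} breaks down. The crux is therefore to bound, uniformly in $t$, the purely oscillatory integral $\int e^{i(x\cdot\xi-t\psi(|\xi|))}\chi_{\rm M}(|\xi|)\,d\xi$ with phase $\psi(r)=\sqrt{4r^{2\sigma}-(r^{2\sigma_1}+r^{2\sigma_2})^2}$ supported on the compact set $\Omega$. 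Here I would pass to the modified-Bessel representation, split the radial integral at $r=1/|x|$ as before, and use non-stationary-phase integration by parts in the region $x+t\nabla\psi\neq 0$; the delicate points are the stationary set $|x|=t|\psi'(r)|$ and, above all, the turning points on $\partial\Omega$, where the two characteristic roots coalesce, $\psi$ vanishes like a square root, and $\psi',\psi''$ blow up. Controlling the contribution of these turning points---where van der Corput or Airy-type estimates must replace the smooth stationary-phase bound and where the fine structure of $\psi$ near $\partial\Omega$ enters---is the step I expect to be by far the most technical, and the one on which the uniform constant bound genuinely rests.
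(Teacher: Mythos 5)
Your treatment of the two outer zones coincides with what the paper actually does: its proof of Lemma \ref{lemma2.4} consists precisely of the observations that $\lambda_2-\lambda_1\sim-|\xi|^{2\sigma_1}$ for small frequencies and $\lambda_2-\lambda_1\sim-|\xi|^{2\sigma_2}$ for large ones, followed by the instruction to ``perform the same proof steps as in Lemma \ref{Lemma2.1}''; your analogues of Propositions \ref{proposition2.1} and \ref{proposition2.2}, with $2\sigma_1$ and $2\sigma_2$ replacing $2(\sigma-\sigma_1)$ and $2(\sigma-\sigma_2)$ and with $s=0$, are exactly the details that claim compresses.

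The gap is the middle zone, and you should be aware that the step you call ``by far the most technical'' does not appear in the paper at all. The paper disposes of $\chi_{\rm M}+\chi_{\rm H}$ in one stroke by invoking the asymptotics $\lambda_2-\lambda_1\sim-|\xi|^{2\sigma_2}$, which are valid only for $|\xi|\ge 1/\varepsilon^*$ and say nothing on $\operatorname{supp}\chi_{\rm M}$; it never mentions the region $\Omega$, where the roots are complex conjugates, $\operatorname{Re}(\lambda_2-\lambda_1)=0$ and $|e^{(\lambda_2-\lambda_1)t}|=1$, nor the coalescence points on $\partial\Omega$, where $\lambda_2-\lambda_1=-\sqrt{(|\xi|^{2\sigma_1}+|\xi|^{2\sigma_2})^2-4|\xi|^{2\sigma}}$ has square-root branch points and the symbol is only H\"older continuous of order $1/2$. (One small correction to your outline: on $\operatorname{supp}\chi_{\rm M}\setminus\Omega$ the difference is real and negative but \emph{not} bounded away from zero, since it vanishes on $\partial\Omega$; so even there Remark \ref{remark1} does not apply verbatim, and everything near $\partial\Omega$ falls into your turning-point case.) Thus your proposal is incomplete at exactly the point where the paper's proof is unjustified, and you should not expect the paper to certify that the van der Corput/Airy analysis you sketch would close it. The obstruction looks substantive rather than technical: a radial symbol with an $|r-r_0|^{1/2}$-singularity on a sphere of positive radius produces a kernel of size $|x|^{-\frac{n-1}{2}-\frac{3}{2}}$, which fails to be integrable for $n\ge 2$, and even a smooth truncation strictly inside $\Omega$ yields a wave-type oscillatory kernel whose $L^1$ norm generically grows in $t$; so no argument based solely on pointwise symbol bounds of the kind used in Lemmas \ref{Lemma2.1}--\ref{Lemma2.2} can give a bound uniform in $t>0$. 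If the goal is the application in Proposition \ref{Pro2.3}, the more robust route is to avoid the factor $e^{(\lambda_2-\lambda_1)t}$ on middle frequencies altogether and estimate $\widehat{K_0},\widehat{K_1}$ there directly: being symmetric functions of $(\lambda_1,\lambda_2)$, they are smooth across $\partial\Omega$, and they carry the damping factor $e^{-(|\xi|^{2\sigma_1}+|\xi|^{2\sigma_2})t/2}$, which supplies the exponential decay that the difference alone lacks.
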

\begin{proof}
   We first consider the case $|\xi| \geq b $. Notice that $\lambda_2 - \lambda_1 \sim -|\xi|^{2\sigma_2}$ for large $|\xi|$. For this reason, perform the same proof steps in Lemma \ref{Lemma2.1}  we have the following conclusion for all $t > 0$:
   \begin{equation*}
       \left\|\frak{F}^{-1}\left(e^{(\lambda_2-\lambda_1)(|\xi|)t}(\chi_{\rm M}(|\xi|)+\chi_{\rm H}(|\xi|))\right)\right\|_{L^1} \lesssim 1.
   \end{equation*}
   Next we consider the case $|\xi| \leq a$. Observing that $\lambda_2 - \lambda_1 \sim -|\xi|^{2\sigma_1}$ for small $|\xi|$ we repeat the same proof steps in Lemma \ref{Lemma2.1} to conclude the following estimate for all $t > 0$:
    \begin{equation*}
       \left\|\frak{F}^{-1}\left(e^{(\lambda_2-\lambda_1)(|\xi|)t}\chi_{\rm L}(|\xi|)\right)\right\|_{L^1} \lesssim 1.
   \end{equation*}
   Therefore, the proof to Lemma \ref{lemma2.4} is completed.
\end{proof}
\begin{proposition}\label{Pro2.3}
    The following estimates hold in $\mathbb{R}^n$ for any $n \geq 1$:
    \begin{align}
        \left\|\frak{F}^{-1}\left(|\xi|^{s}\widehat{K_0}\right)(t,\cdot)\right\|_{L^1} &\lesssim
        \begin{cases}
            t^{-\frac{s}{2(\sigma-\sigma_2)}} &\text{ if } t \in (0,1],\\
            t^{-\frac{s}{2(\sigma-\sigma_1)}} &\text{ if } t \in [1, \infty),
        \end{cases} \label{estimates2.5.1}\\
        \left\|\frak{F}^{-1}\left(|\xi|^{s}\widehat{K_1}\right)(t,\cdot)\right\|_{L^1} &\lesssim
        \begin{cases}
            t^{-\frac{s}{2(\sigma-\sigma_2)}+1} &\text{ if } t \in (0,1],\\
            t^{-\frac{s}{2(\sigma-\sigma_1)}+1} &\text{ if } t \in [1,\infty),
        \end{cases}\label{estimates2.5.2} \\
        \left\|\frak{F}^{-1}\left(|\xi|^{s}\partial_t\widehat{K_1}\right)(t,\cdot)\right\|_{L^1} 
        &\lesssim 
        \begin{cases}
            t^{1-\frac{s+2\sigma_2}{2(\sigma-\sigma_2)}} &\text{ if } t \in (0,1],\\
            t^{1-\frac{s+2\sigma_1}{2(\sigma-\sigma_1)}} &\text{ if } t \in [1, \infty) \label{estimates2.5.3}
            \end{cases}\\
            \left\|\frak{F}^{-1}\left(|\xi|^{s}\partial_t\widehat{K_0}\right)(t,\cdot)\right\|_{L^1} &\lesssim 
        \begin{cases}
            t^{-\frac{s+2\sigma}{2(\sigma-\sigma_2)}+1} &\text{ if } t \in (0,1],\\
            t^{-\frac{s+2\sigma}{2(\sigma-\sigma_1)}+1} &\text{ if } t \in [1, \infty), \label{estimates2.5.4}
        \end{cases}
    \end{align}
    for any non-negative number $s$.
\end{proposition}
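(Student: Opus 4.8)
The plan is to reduce all four estimates to the two building-block kernels $e^{\lambda_1 t}$ and $e^{\lambda_2 t}$, whose $L^1$ behaviour is already controlled by Lemmas \ref{Lemma2.1}--\ref{lemma2.3} (for $\lambda_1$) and Lemma \ref{lemma2.4} (for the difference $\lambda_2-\lambda_1$). Two algebraic identities, read off from the ODE \eqref{5.83} together with the Vieta relations $\lambda_1+\lambda_2=-(|\xi|^{2\sigma_1}+|\xi|^{2\sigma_2})$ and $\lambda_1\lambda_2=|\xi|^{2\sigma}$ (recall $\mu_1=\mu_2=1$), will be the backbone:
\begin{align*}
\partial_t\widehat{K_0} &= -|\xi|^{2\sigma}\widehat{K_1}, \\
\partial_t\widehat{K_1} &= \widehat{K_0} - \big(|\xi|^{2\sigma_1}+|\xi|^{2\sigma_2}\big)\widehat{K_1}.
\end{align*}
Granting \eqref{estimates2.5.1} and \eqref{estimates2.5.2}, the first identity turns \eqref{estimates2.5.4} into \eqref{estimates2.5.2} with $s$ replaced by $s+2\sigma$, while the second turns \eqref{estimates2.5.3} into a sum of \eqref{estimates2.5.1} and of \eqref{estimates2.5.2} with $s$ replaced by $s+2\sigma_1$ and $s+2\sigma_2$. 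Using $\sigma_1<\sigma/2<\sigma_2$ one checks that for $t\le1$ the $|\xi|^{s+2\sigma_2}\widehat{K_1}$ contribution dominates (smallest exponent), while for $t\ge1$ the $|\xi|^{s+2\sigma_1}\widehat{K_1}$ contribution dominates (largest exponent), which reproduces exactly the exponents in \eqref{estimates2.5.3}. Hence the genuine work lies only in \eqref{estimates2.5.1} and \eqref{estimates2.5.2}.

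For \eqref{estimates2.5.2} I would employ the Duhamel-type representation $\widehat{K_1}=\int_0^t e^{\lambda_1(t-\tau)}e^{\lambda_2\tau}\,d\tau$, which removes the singular factor $(\lambda_1-\lambda_2)^{-1}$ and is the source of the extra power $t^{+1}$. Writing the inverse Fourier transform of the integrand as a convolution and applying Young's inequality $\|f*g\|_{L^1}\le\|f\|_{L^1}\|g\|_{L^1}$, together with $\|\mathfrak{F}^{-1}(e^{\lambda_2\tau})\|_{L^1}\le\|\mathfrak{F}^{-1}(e^{\lambda_1\tau})\|_{L^1}\|\mathfrak{F}^{-1}(e^{(\lambda_2-\lambda_1)\tau})\|_{L^1}\lesssim1$ (from Lemma \ref{lemma2.3} with $s=0$ and Lemma \ref{lemma2.4}), I would split $\int_0^t=\int_0^{t/2}+\int_{t/2}^t$ and let $|\xi|^s$ ride on the factor whose time argument stays comparable to $t$: on $e^{\lambda_1(t-\tau)}$ for $\tau\le t/2$ and on $e^{\lambda_2\tau}$ for $\tau\ge t/2$. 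Since $\|\mathfrak{F}^{-1}(|\xi|^se^{\lambda_2\tau})\|_{L^1}\lesssim\|\mathfrak{F}^{-1}(|\xi|^se^{\lambda_1\tau})\|_{L^1}$ by the same Young/Lemma \ref{lemma2.4} device, Lemma \ref{lemma2.3} yields $\int_0^{t/2}(t-\tau)^{-s/(2(\sigma-\sigma_2))}\,d\tau\lesssim t^{1-s/(2(\sigma-\sigma_2))}$ for $t\le1$ (and the analogous $t^{1-s/(2(\sigma-\sigma_1))}$ for $t\ge1$), with the $\int_{t/2}^t$ piece of the same or smaller order; this is precisely \eqref{estimates2.5.2}.

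The main obstacle is \eqref{estimates2.5.1}, because its target is the \emph{sharp} rate carried by $e^{\lambda_1 t}$ and cannot be obtained from \eqref{estimates2.5.2} by a black-box substitution: bounding the term $\lambda_1\widehat{K_1}$ in $\widehat{K_0}=e^{\lambda_1 t}-\lambda_1\widehat{K_1}$ through \eqref{estimates2.5.2} discards the high-frequency smoothing hidden in $(\lambda_1-\lambda_2)^{-1}$ and manufactures a spurious blow-up as $t\to0^+$. Instead I would estimate $\widehat{K_0}=\big(\lambda_1 e^{\lambda_2 t}-\lambda_2 e^{\lambda_1 t}\big)/(\lambda_1-\lambda_2)$ directly, repeating the modified-Bessel-function and successive-integration-by-parts scheme of Lemmas \ref{Lemma2.1} and \ref{Lemma2.2} with the amplitude now taken to be $\widehat{K_0}$. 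Those proofs only use the exponential decay of $e^{\lambda_1 t}$ and symbol-type derivative bounds on the amplitude; since the coefficients $-\lambda_2/(\lambda_1-\lambda_2)\to1$ and $\lambda_1/(\lambda_1-\lambda_2)$ are radial symbols of order $0$ and of strictly negative order, respectively, in both the low- and high-frequency zones (again by $\sigma_1<\sigma/2<\sigma_2$), and the $e^{\lambda_2 t}$ contribution is strictly subdominant, the same integration-by-parts estimates carry over and deliver the rate of $e^{\lambda_1 t}$, namely \eqref{estimates2.5.1}. Verifying these derivative bounds on $\lambda_i/(\lambda_1-\lambda_2)$, via a Faà di Bruno expansion exactly as in Proposition \ref{proposition2.1}, is where the technical effort concentrates.
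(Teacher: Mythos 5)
Your treatment of \eqref{estimates2.5.2}, \eqref{estimates2.5.3} and \eqref{estimates2.5.4} coincides with the paper's proof: the paper also starts from the factorization $\widehat{K_1}=te^{\lambda_1 t}\int_0^1 e^{\theta t(\lambda_2-\lambda_1)}\,d\theta$ (its formula \eqref{2.5.5}, which is exactly your $\int_0^t e^{\lambda_1(t-\tau)}e^{\lambda_2\tau}\,d\tau$ after the substitution $\tau=\theta t$), applies Young's convolution inequality together with Lemmas \ref{lemma2.3} and \ref{lemma2.4} (your extra splitting $\int_0^{t/2}+\int_{t/2}^t$ is harmless but unnecessary, since one may keep $|\xi|^s$ on $e^{\lambda_1 t}$ with the full time $t$), and then reads \eqref{estimates2.5.3}--\eqref{estimates2.5.4} off the same identities $\partial_t\widehat{K_0}=-|\xi|^{2\sigma}\widehat{K_1}$ and $\partial_t\widehat{K_1}=\widehat{K_0}-(|\xi|^{2\sigma_1}+|\xi|^{2\sigma_2})\widehat{K_1}$, with the same dominance bookkeeping you carried out. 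The genuine divergence is \eqref{estimates2.5.1}: the paper estimates $\widehat{K_0}=e^{\lambda_1 t}-\lambda_1\widehat{K_1}$ by the triangle inequality, invoking \eqref{approx1}--\eqref{approx2}, whereas you declare this route unworkable and rerun the entire Bessel-function/integration-by-parts machinery with amplitude $\widehat{K_0}$. Your objection is valid only against a \emph{global} black-box use of \eqref{estimates2.5.2}: if one pretends $\lambda_1$ has the single order $2(\sigma-\sigma_1)$, the small-time bound indeed degenerates to $t^{1-(s+2(\sigma-\sigma_1))/(2(\sigma-\sigma_2))}$, which blows up faster than claimed. But every ingredient behind \eqref{estimates2.5.2} (Lemmas \ref{Lemma2.1}, \ref{Lemma2.2}, Remark \ref{remark1}, Lemma \ref{lemma2.4}) is frequency-localized, and $\lambda_1$ is a radial symbol of order $2(\sigma-\sigma_2)$ on the support of $\chi_{\rm H}$ and of order $2(\sigma-\sigma_1)$ on the support of $\chi_{\rm L}$ --- precisely the derivative bounds $|\lambda_1^{(i)}(\eta)|\lesssim\eta^{2(\sigma-\sigma_2)-i}$, resp.\ $\eta^{2(\sigma-\sigma_1)-i}$, established in Propositions \ref{proposition2.1} and \ref{proposition2.2}. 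Using the zone-wise order one gets $t\cdot t^{-(s+2(\sigma-\sigma_2))/(2(\sigma-\sigma_2))}=t^{-s/(2(\sigma-\sigma_2))}$ for $t\le1$ (high frequencies; the low-frequency piece is $O(t)$) and $t\cdot t^{-(s+2(\sigma-\sigma_1))/(2(\sigma-\sigma_1))}=t^{-s/(2(\sigma-\sigma_1))}$ for $t\ge1$ (low frequencies; the high-frequency piece decays exponentially), which is exactly \eqref{estimates2.5.1}. So the paper's shortcut is sound, and your direct route, while correct, duplicates effort: the symbol bounds you would need for $\lambda_i/(\lambda_1-\lambda_2)$ are of the same nature as those already proved for $\lambda_1$. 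One small correction to your final paragraph: in the low-frequency zone $\lambda_1/(\lambda_1-\lambda_2)\sim-|\xi|^{2(\sigma-2\sigma_1)}$ has strictly \emph{positive} order (it has negative order, $2(\sigma-2\sigma_2)$, only at high frequencies); positivity of the order is what actually gives boundedness near $\xi=0$, so your conclusion survives, but the statement as written has the sign backwards.
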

    \begin{proof}
        First,taking account of representation for $\widehat{K_1}$ we can re-write it as follows:
        \begin{align}
            \widehat{K_1}(t,\xi) &= e^{\lambda_1 t} \frac{e^{(\lambda_2-\lambda_1)t }-1}{\lambda_2-\lambda_1}= te^{\lambda_1t} \int_0^1 e^{\theta t (\lambda_2-\lambda_1)} d\theta. \label{2.5.5}
        \end{align}
        for small $|\xi|$ and large $|\xi|$. For this reason combine Lemma \ref{lemma2.3} and Lemma \ref{lemma2.4} we have the conclusion
        \begin{align*}
             \left\|\frak{F}^{-1}\left(|\xi|^{s}\widehat{K_1}\right)(t,\cdot)\right\|_{L^1} &\lesssim t \int_0^1 \left\|\frak{F}^{-1}\left(|\xi|^{s}\widehat{K_1}\right)\right\|_{L^1} \left\|\frak{F}^{-1}\left(e^{\theta t (\lambda_2-\lambda_1)}\right)\right\|_{L^1} d\theta \\
             &\lesssim 
        \begin{cases}
            t^{-\frac{s}{2(\sigma-\sigma_2)}+1} &\text{ if } t \in (0,1],\\
            t^{-\frac{s}{2(\sigma-\sigma_1)}+1} &\text{ if } t \in [1,\infty).
        \end{cases}
        \end{align*}
    Next, from the approximations (\ref{approx1}) and (\ref{approx2}) we have
    \begin{align*}
        \left\|\frak{F}^{-1}\left(|\xi|^{s}\widehat{K_0}\right)\right\| &\leq \left\|\frak{F}^{-1}\left(|\xi|^{s}\lambda_1\widehat{K_1}\right)\right\|_{L^1} + \left\|\frak{F}^{-1}\left(|\xi|^{s}e^{\lambda_1 t}\right)\right\|_{L^1}\\
        &\lesssim 
        \begin{cases}
            t^{-\frac{s}{2(\sigma-\sigma_2)}} &\text{ if } t \in (0,1],\\
            t^{-\frac{s}{2(\sigma-\sigma_1)}} &\text{ if } t \in [1, \infty).
        \end{cases} 
    \end{align*}
   Finally, taking account of some estimates related to the time derivative, we note that
   \begin{align}
       \partial_t \widehat{K_0} = -|\xi|^{2\sigma} \widehat{K_1} \text{ and } \partial_t\widehat{K_1} = \widehat{K_0} - \left(|\xi|^{2\sigma_1}+|\xi|^{2\sigma_2}\right) \widehat{K_1}. \label{equation2.5.1}
   \end{align}
   From there we have estimates (\ref{estimates2.5.3}) and (\ref{estimates2.5.4}).  Summarizing, the proof to Proposition \ref{Pro2.3} is completed.
    \end{proof}
%\end{lemma}

\subsection{$L^\ity$ estimates}
\begin{lemma}\label{lemma2.5}
    The following estimates hold in $\mathbb{R}^n$ for any $n \geq 1$:
    \begin{align*}
        \left\|\frak{F}^{-1}\left(|\xi|^{s}\widehat{K_1}\chi_{\rm L}(|\xi|)\right)\right\|_{L^{\infty}} &\lesssim
        \begin{cases}
            t &\text{ if } t \in (0,1],\\
            t^{-\frac{n+s}{2(\sigma-\sigma_1)}+1} &\text{ if } t \in [1,\infty),
        \end{cases}\\
        \left\|\frak{F}^{-1}\left(|\xi|^{s}\widehat{K_1}(\chi_{\rm M}(|\xi|)+\chi_{\rm H}(|\xi|))\right)\right\|_{L^{\infty}} &\lesssim t^{1-\frac{s+n}{2(\sigma-\sigma_2)}} \quad\text{ for any } t 
        \in (0, \infty), \\
        \left\|\frak{F}^{-1}\left(|\xi|^{s}\widehat{K_0}\chi_{\rm L}(|\xi|)\right)\right\|_{L^{\infty}} &\lesssim
        \begin{cases}
            1 &\text{ if } t \in (0,1],\\
            t^{-\frac{n+s}{2(\sigma-\sigma_1)}} &\text{ if } t \in [1,\infty),
        \end{cases}\\
    \left\|\frak{F}^{-1}\left(|\xi|^{s}\widehat{K_0}(\chi_{\rm M}(|\xi|)+\chi_{\rm H}(|\xi|))\right)\right\|_{L^{\infty}} &\lesssim t^{-\frac{n+s}{2(\sigma-\sigma_2)}} \quad\text{ for any } t \in (0, \infty),
    \end{align*}
    for any non-negative number $s$.
\end{lemma}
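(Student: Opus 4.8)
The plan is to convert every $L^{\infty}$ bound into an $L^1$ bound on the Fourier symbol by means of the elementary inequality $\|\mathfrak{F}^{-1}(g)\|_{L^{\infty}}\lesssim \|g\|_{L^1(\R^n_\xi)}$, and then to estimate the resulting frequency integrals separately on the support of $\chi_{\rm L}$ and on the support of $\chi_{\rm M}+\chi_{\rm H}$, using the asymptotics (\ref{approx1}) and (\ref{approx2}) together with a parabolic rescaling in $\xi$. Unlike in Lemma \ref{Lemma2.1}, no oscillating-integral or Bessel-function analysis is required here, since it suffices to integrate the modulus of the symbol.

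I would first treat $\widehat{K_1}$. Using the representation (\ref{2.5.5}), i.e.\ $\widehat{K_1}=te^{\lambda_1 t}\int_0^1 e^{\theta t(\lambda_2-\lambda_1)}\,d\theta$, and the fact that $\lambda_2-\lambda_1<0$ on $\operatorname{supp}\chi_{\rm L}$ and on $\operatorname{supp}\chi_{\rm H}$ (so that $e^{\theta t(\lambda_2-\lambda_1)}\le 1$), one obtains the pointwise bound $|\widehat{K_1}|\lesssim t\,e^{\lambda_1 t}$. On $\operatorname{supp}\chi_{\rm L}$ this reads $|\widehat{K_1}|\lesssim t\,e^{-c|\xi|^{2(\sigma-\sigma_1)}t}$ by (\ref{approx1}), whence
\begin{align*}
\big\|\,|\xi|^{s}\widehat{K_1}\chi_{\rm L}\big\|_{L^1(\R^n_\xi)}\lesssim t\int_{|\xi|\le \varepsilon^*}|\xi|^{s}e^{-c|\xi|^{2(\sigma-\sigma_1)}t}\,d\xi .
\end{align*}
For $t\in(0,1]$ the exponential is bounded by $1$ and the integral converges, giving $t$; for $t\ge 1$ the rescaling $\eta=\xi t^{1/(2(\sigma-\sigma_1))}$ extracts the factor $t^{-(n+s)/(2(\sigma-\sigma_1))}$ and yields $t^{1-(n+s)/(2(\sigma-\sigma_1))}$. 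The same argument on $\operatorname{supp}\chi_{\rm H}$, now with (\ref{approx2}) and the rescaling $\eta=\xi t^{1/(2(\sigma-\sigma_2))}$, gives $t^{1-(n+s)/(2(\sigma-\sigma_2))}$ for all $t>0$; on the compact annulus $\operatorname{supp}\chi_{\rm M}$ the real part of $\lambda_1$ is bounded away from zero, so that contribution is $\lesssim t\,e^{-ct}$, dominated by the stated power for both small and large times.

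For $\widehat{K_0}$ I would use the algebraic identity $\widehat{K_0}=e^{\lambda_1 t}-\lambda_1\widehat{K_1}$, which produces $|\widehat{K_0}|\lesssim e^{\lambda_1 t}\big(1+t|\lambda_1|\big)$. On $\operatorname{supp}\chi_{\rm L}$ the leading term $\int_{|\xi|\le \varepsilon^*}|\xi|^{s}e^{-c|\xi|^{2(\sigma-\sigma_1)}t}\,d\xi$ is bounded by $1$ for $t\in(0,1]$ and, after the same rescaling, by $t^{-(n+s)/(2(\sigma-\sigma_1))}$ for $t\ge1$; the correction $t|\lambda_1|e^{\lambda_1 t}\sim t|\xi|^{2(\sigma-\sigma_1)}e^{-c|\xi|^{2(\sigma-\sigma_1)}t}$ contributes the very same order after the change of variables. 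The crucial structural point is that the leading term $e^{\lambda_1 t}$ carries no prefactor $t$, in contrast to $\widehat{K_1}$, so the resulting exponents are precisely those of the $\widehat{K_1}$ estimates lowered by one power of $t$. The part localised to $\chi_{\rm M}+\chi_{\rm H}$ is handled identically with the exponent $\sigma-\sigma_2$, yielding $t^{-(n+s)/(2(\sigma-\sigma_2))}$ for all $t>0$.

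The only delicate issue is to verify that the contribution from the middle band $\operatorname{supp}\chi_{\rm M}$—where the characteristic roots may be complex—is harmless in the two estimates localised to $\chi_{\rm M}+\chi_{\rm H}$. This follows because $\operatorname{Re}\lambda_{1,2}=-\tfrac12(|\xi|^{2\sigma_1}+|\xi|^{2\sigma_2})$ is bounded below by a positive constant on the compact support of $\chi_{\rm M}$, so that piece decays like $e^{-ct}$; for small $t$ it stays bounded and is dominated by the (possibly divergent) power coming from the high frequencies, while for large $t$ the exponential beats every polynomial. Collecting the low-, middle- and high-frequency bounds then gives the four estimates.
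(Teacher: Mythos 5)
Your proposal is correct and takes essentially the same route as the paper: the paper's proof also derives the pointwise symbol bounds $|\widehat{K_1}(t,\xi)|\lesssim t\,e^{-c|\xi|^{2(\sigma-\sigma_j)}t}$ and $|\widehat{K_0}(t,\xi)|\lesssim e^{-c|\xi|^{2(\sigma-\sigma_j)}t}$ (with $j=1$ at low and $j=2$ at high frequencies) from the representation \eqref{2.5.5} and the root asymptotics \eqref{approx1}--\eqref{approx2}, and then concludes via the bound of $\|\mathfrak{F}^{-1}(g)\|_{L^\infty}$ by $\|g\|_{L^1_\xi}$. The only difference is one of detail: the paper leaves the frequency integration, the parabolic rescaling, and the middle-band contribution implicit, all of which you carry out explicitly.
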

\begin{proof}
    For the sake of the asymptotic behavior of the characteristic roots in (\ref{approx1}) and (\ref{approx2}), combined with performance (\ref{2.5.5}) we have the following conclusion:
    \begin{align*}
        \left|\widehat{K_1}(t,\xi)\right| &\lesssim t e^{-|\xi|^{2(\sigma-\sigma_1)}t}, \quad \left|\widehat{K_0}(t,\xi)\right| \lesssim e^{-c_1|\xi|^{2(\sigma-\sigma_1)}t} \text{ for small } |\xi|,\\
        \left|\widehat{K_1}(t,\xi)\right| &\lesssim te^{-|\xi|^{2(\sigma-\sigma_2)}t}, \quad \left|\widehat{K_0}(t,\xi)\right| \lesssim e^{-|\xi|^{2(\sigma-\sigma_2)}t} \text{ for large } |\xi|, 
    \end{align*}
    where $c$ is a suitable positive constant. Therefore, the proof to Lemma \ref{lemma2.5} is completed. 
\end{proof}
From  relationship (\ref{equation2.5.1}) and Lemma \ref{lemma2.5} the following statement follows immediately.
\begin{proposition}\label{Pro2.4}
    The following estimates hold in $\mathbb{R}^n$ for any $n \geq 1$:
    \begin{align*}
        \left\|\frak{F}^{-1}\left(|\xi|^{s}\widehat{K_1}\right)\right\|_{L^{\infty}} &\lesssim 
        \begin{cases}
            t^{-\frac{n}{2(\sigma-\sigma_2)}-\frac{s}{2(\sigma-\sigma_2)}+1} &\text{ if } t \in (0,1],\\
            t^{-\frac{n}{2(\sigma-\sigma_1)}-\frac{s}{2(\sigma-\sigma_1)}+1} &\text{ if } t \in [1,\infty),
        \end{cases}\\
        \left\|\frak{F}^{-1}\left(|\xi|^{s}\widehat{K_0}\right)\right\|_{L^{\infty}} &\lesssim 
        \begin{cases}
             t^{-\frac{n}{2(\sigma-\sigma_2)}-\frac{s}{2(\sigma-\sigma_2)}} &\text{ if } t \in (0,1],\\
            t^{-\frac{n}{2(\sigma-\sigma_1)}-\frac{s}{2(\sigma-\sigma_1)}} &\text{ if } t \in [1,\infty),
        \end{cases}\\
        \left\|\frak{F}^{-1}\left(|\xi|^{s}\partial_t\widehat{K_1}\right)\right\|_{L^{\infty}} &\lesssim 
        \begin{cases}
            t^{-\frac{n}{2(\sigma-\sigma_2)}-\frac{s+2\sigma_2}{2(\sigma-\sigma_2)}+1} &\text{ if } t \in (0,1],\\
            t^{-\frac{n}{2(\sigma-\sigma_1)}-\frac{s+2\sigma_1}{2(\sigma-\sigma_1)}+1} &\text{ if } t \in [1, \infty),
        \end{cases}\\
        \left\|\frak{F}^{-1}\left(|\xi|^{s}\partial_t\widehat{K_0}\right)\right\|_{L^{\infty}} &\lesssim 
        \begin{cases}
            t^{-\frac{n}{2(\sigma-\sigma_2)}-\frac{s+2\sigma}{2(\sigma-\sigma_2)}+1} &\text{ if } t \in (0,1],\\
            t^{-\frac{n}{2(\sigma-\sigma_1)}-\frac{s+2\sigma}{2(\sigma-\sigma_1)}+1} &\text{ if } t \in [1,\infty),
        \end{cases}
    \end{align*}
    for any non-negative number $s$.
\end{proposition}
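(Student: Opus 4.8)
The plan is to read off each of the four bounds from the frequency-localized estimates in Lemma~\ref{lemma2.5}, combined with the algebraic identities~\eqref{equation2.5.1}; indeed the statement is asserted to follow immediately from these ingredients. The only substantive work is exponent bookkeeping: in each of the two regimes $t\in(0,1]$ and $t\in[1,\infty)$ one must decide which frequency zone, and which summand in the decomposition of the time derivatives, produces the dominant power of $t$.

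First I would treat $\frak{F}^{-1}(|\xi|^s\widehat{K_1})$ and $\frak{F}^{-1}(|\xi|^s\widehat{K_0})$ directly. For $j\in\{0,1\}$ the triangle inequality in $L^\infty$ gives
\begin{align*}
\left\|\frak{F}^{-1}\left(|\xi|^s\widehat{K_j}\right)\right\|_{L^\infty} \le \left\|\frak{F}^{-1}\left(|\xi|^s\widehat{K_j}\chi_{\rm L}\right)\right\|_{L^\infty} + \left\|\frak{F}^{-1}\left(|\xi|^s\widehat{K_j}(\chi_{\rm M}+\chi_{\rm H})\right)\right\|_{L^\infty},
\end{align*}
and both summands are supplied by Lemma~\ref{lemma2.5}. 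On $(0,1]$ the middle--high piece, whose decay rate is governed by $\sigma-\sigma_2$, dominates: since $\sigma_2>\sigma_1$ we have $\sigma-\sigma_2<\sigma-\sigma_1$, and a smaller positive power of $t$ is the larger quantity when $t\le1$. On $[1,\infty)$ the low-frequency piece, of rate $\sigma-\sigma_1$, dominates, because its decay exponent $\tfrac{n+s}{2(\sigma-\sigma_1)}$ is the smaller one and slower decay wins for $t\ge1$. This reproduces the two stated regimes for $\widehat{K_0}$ and $\widehat{K_1}$.

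Next I would deduce the time-derivative bounds from~\eqref{equation2.5.1}. For $\partial_t\widehat{K_0}=-|\xi|^{2\sigma}\widehat{K_1}$ it suffices to substitute $s+2\sigma$ for $s$ in the $\widehat{K_1}$ bound just obtained, which reproduces the listed powers verbatim. For $\partial_t\widehat{K_1}$ I would expand
\begin{align*}
|\xi|^s\partial_t\widehat{K_1} = |\xi|^s\widehat{K_0} - |\xi|^{s+2\sigma_1}\widehat{K_1} - |\xi|^{s+2\sigma_2}\widehat{K_1},
\end{align*}
estimate the three terms by the previous bounds with shifted weights $s$, $s+2\sigma_1$, $s+2\sigma_2$, and retain the dominant one. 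Here lies the crux of the comparison: for $t\in(0,1]$ the term $|\xi|^{s+2\sigma_2}\widehat{K_1}$ wins (largest shifted weight against the rate $\sigma-\sigma_2$, and $\sigma_2>\sigma-\sigma_2$ makes it beat the $\widehat{K_0}$ contribution), while for $t\in[1,\infty)$ the term $|\xi|^{s+2\sigma_1}\widehat{K_1}$ wins (smallest shifted weight against the rate $\sigma-\sigma_1$, and $\sigma_1<\sigma-\sigma_1$ makes it beat the $\widehat{K_0}$ contribution). These selections yield precisely the exponents $-\tfrac{n}{2(\sigma-\sigma_2)}-\tfrac{s+2\sigma_2}{2(\sigma-\sigma_2)}+1$ and $-\tfrac{n}{2(\sigma-\sigma_1)}-\tfrac{s+2\sigma_1}{2(\sigma-\sigma_1)}+1$ recorded in the statement.

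I anticipate no analytic obstacle: every ingredient is a direct consequence of Lemma~\ref{lemma2.5}, and the entire argument reduces to comparing exponents via the structural inequalities $\sigma_1<\sigma/2<\sigma_2$, equivalently $\sigma-\sigma_2<\sigma-\sigma_1$, $\sigma_1<\sigma-\sigma_1$, and $\sigma_2>\sigma-\sigma_2$. The single point requiring care is that the weaker damping weight $|\xi|^{2\sigma_1}$ controls the long-time (low-frequency) behavior whereas $|\xi|^{2\sigma_2}$ controls the short-time (high-frequency) behavior, so that the dominant summand of $\partial_t\widehat{K_1}$ genuinely differs between the two time intervals.
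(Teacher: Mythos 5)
Your proposal is correct and follows exactly the paper's route: the paper states that Proposition \ref{Pro2.4} ``follows immediately'' from the identities \eqref{equation2.5.1} and the frequency-localized $L^\infty$ bounds of Lemma \ref{lemma2.5}, which is precisely your triangle-inequality decomposition plus weight-shifting argument. Your explicit exponent comparisons (mid--high dominates for $t\le 1$, low frequency for $t\ge 1$, and the selection of the $|\xi|^{s+2\sigma_2}$ versus $|\xi|^{s+2\sigma_1}$ term in $\partial_t\widehat{K_1}$ via $\sigma_1<\sigma/2<\sigma_2$) correctly supply the bookkeeping the paper leaves implicit.
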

\subsection{$L^r$ estimates} 
From the statements of Proposition \ref{Pro2.3} and Proposition \ref{Pro2.4}, by applying Littlewood's inequality we may conclude the following $L^r$ estimates.

\begin{proposition}
   The following estimates hold in $\mathbb{R}^n$ for any $n \geq 1$:
   \begin{align*}
       \left\|\frak{F}^{-1}\left(|\xi|^{s}\widehat{K_0}\right)(t,\cdot)\right\|_{L^r} &\lesssim
       \begin{cases}
           t^{-\frac{n}{2(\sigma-\sigma_2)}\left(1-\frac{1}{r}\right)-\frac{s}{2(\sigma-\sigma_2)}} &\text{ if } t \in (0,1],\\
           t^{-\frac{n}{2(\sigma-\sigma_1)}\left(1-\frac{1}{r}\right)-\frac{s}{2(\sigma-\sigma_1)}} &\text{ if } t \in [1,\infty),
           \end{cases}\\
       \left\|\frak{F}^{-1}\left(|\xi|^{s}\widehat{K_1}\right)(t,\cdot)\right\|_{L^r} &\lesssim
       \begin{cases}
           t^{-\frac{n}{2(\sigma-\sigma_2)}\left(1-\frac{1}{r}\right)-\frac{s}{2(\sigma-\sigma_2)}+1} &\text{ if } t \in (0,1],\\
           t^{-\frac{n}{2(\sigma-\sigma_1)}\left(1-\frac{1}{r}\right)-\frac{s}{2(\sigma-\sigma_1)}+1} &\text{ if } t \in [1,\infty),
       \end{cases}
       \end{align*}
       \begin{align*}
       \left\|\frak{F}^{-1}\left(|\xi|^{s}\partial_t\widehat{K_0}\right)(t,\cdot)\right\|_{L^r} &\lesssim
       \begin{cases}
           t^{-\frac{n}{2(\sigma-\sigma_2)}\left(1-\frac{1}{r}\right)-\frac{s+2\sigma}{2(\sigma-\sigma_2)}+1} &\text{ if } t \in (0,1],\\
           t^{-\frac{n}{2(\sigma-\sigma_1)}\left(1-\frac{1}{r}\right)-\frac{s+2\sigma}{2(\sigma-\sigma_1)}+1} &\text{ if } t \in [1,\infty),
       \end{cases}\\
       \left\|\frak{F}^{-1}\left(|\xi|^{s}\partial_t\widehat{K_1}\right)(t,\cdot)\right\|_{L^r} &\lesssim
       \begin{cases}
           t^{-\frac{n}{2(\sigma-\sigma_2)}\left(1-\frac{1}{r}\right)-\frac{s+2\sigma_2}{2(\sigma-\sigma_2)}+1} &\text{ if } t \in (0,1],\\
           t^{-\frac{n}{2(\sigma-\sigma_1)}\left(1-\frac{1}{r}\right)-\frac{s+2\sigma_1}{2(\sigma-\sigma_1)}+1} &\text{ if } t \in [1,\infty),
       \end{cases}
   \end{align*}
   for all $r \in [1, \infty]$ and any non-negative $s$.
\end{proposition}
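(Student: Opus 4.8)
The plan is to interpolate the $L^1$ bounds from Proposition \ref{Pro2.3} against the $L^\infty$ bounds from Proposition \ref{Pro2.4} by means of the elementary interpolation (Littlewood) inequality
$$\|f\|_{L^r} \le \|f\|_{L^1}^{1/r}\,\|f\|_{L^\infty}^{1-1/r}, \qquad 1 \le r \le \infty,$$
which is exactly the interpolation between $L^1$ and $L^\infty$ with parameter $\theta = 1/r$. Since each of the four kernels $|\xi|^s\widehat{K_0}$, $|\xi|^s\widehat{K_1}$, $|\xi|^s\partial_t\widehat{K_0}$, $|\xi|^s\partial_t\widehat{K_1}$ has already been controlled in both $L^1$ and $L^\infty$ by the two preceding propositions, the entire task reduces to substituting those bounds and reading off the exponent of $t$.

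I would treat the two time regimes $t \in (0,1]$ and $t \in [1,\infty)$ separately, since the decay rate is governed by $\sigma-\sigma_2$ for small times and by $\sigma-\sigma_1$ for large times. For definiteness take $f = \mathfrak{F}^{-1}(|\xi|^s\widehat{K_0})$ on $(0,1]$: Proposition \ref{Pro2.3} gives $\|f\|_{L^1} \lesssim t^{-s/(2(\sigma-\sigma_2))}$, while Proposition \ref{Pro2.4} gives $\|f\|_{L^\infty} \lesssim t^{-(n+s)/(2(\sigma-\sigma_2))}$. Raising these to the powers $1/r$ and $1-1/r$ and multiplying, the exponent of $t$ becomes
$$\frac1r\left(-\frac{s}{2(\sigma-\sigma_2)}\right) + \left(1-\frac1r\right)\left(-\frac{n+s}{2(\sigma-\sigma_2)}\right) = -\frac{n}{2(\sigma-\sigma_2)}\left(1-\frac1r\right) - \frac{s}{2(\sigma-\sigma_2)},$$
which is precisely the asserted rate. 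Replacing $\sigma_2$ by $\sigma_1$ reproduces the bound for $t \in [1,\infty)$, and the same arithmetic applied to $\widehat{K_1}$, $\partial_t\widehat{K_0}$, $\partial_t\widehat{K_1}$ — whose $L^1$/$L^\infty$ pairs carry the additional shifts $+1$, $s+2\sigma$ and $s+2\sigma_i$ recorded in Propositions \ref{Pro2.3}--\ref{Pro2.4} — yields each remaining line of the statement verbatim.

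There is essentially no genuine obstacle here, as the conclusion is a mechanical consequence of the two propositions already in hand. The one point deserving a moment's care is the behaviour of the additive constants, namely the $+1$ coming from the factor $t$ in the representation \eqref{2.5.5} of $\widehat{K_1}$ and the derivative shifts $2\sigma$, $2\sigma_i$: one must confirm that these terms pass unchanged through the interpolation. This holds because each such additive constant is independent of $r$ and occurs identically in the $L^1$ and $L^\infty$ exponents, so the convex combination weighted by $1/r$ and $1-1/r$ leaves it invariant. Once this is checked for all four kernels across both regimes, the four displayed estimates follow and the proof is complete.
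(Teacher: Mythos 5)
Your proof is correct and is essentially identical to the paper's: the paper derives this proposition precisely by applying Littlewood's interpolation inequality $\|f\|_{L^r}\le\|f\|_{L^1}^{1/r}\|f\|_{L^\infty}^{1-1/r}$ to the $L^1$ bounds of Proposition \ref{Pro2.3} and the $L^\infty$ bounds of Proposition \ref{Pro2.4}. Your exponent arithmetic, including the observation that the $r$-independent shifts ($+1$, $2\sigma$, $2\sigma_i$) pass through the convex combination unchanged, is exactly the computation the paper leaves implicit.
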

\subsection{$L^p-L^q$ estimates}
\begin{proposition}[$L^p-L^q$ estimates not necessarily on the conjugate line]
    Let $0 < \sigma_1 < \frac{\sigma}{2} < \sigma_2 < \sigma$ in (\ref{Main.Eq.3}) and $1 \leq m < q < \infty$. Then, the solutions to (\ref{Main.Eq.3}) satisfy the following $L^p-L^q$ estimates:
    \begin{align*}
        \left\||D|^{s}u(t,\cdot)\right\|_{L^q} &\lesssim 
        \begin{cases}
            t^{-\frac{n}{2(\sigma-\sigma_2)}\left(1-\frac{1}{r}\right)-\frac{s}{2(\sigma-\sigma_2)}} \|u_0\|_{L^p}+   t^{-\frac{n}{2(\sigma-\sigma_2)}\left(1-\frac{1}{r}\right)-\frac{s}{2(\sigma-\sigma_2)}+1} \|u_1\|_{L^p} &\text{ if } t \in (0,1],\\
            t^{-\frac{n}{2(\sigma-\sigma_1)}\left(1-\frac{1}{r}\right)-\frac{s}{2(\sigma-\sigma_1)}} \|u_0\|_{L^p} + t^{-\frac{n}{2(\sigma-\sigma_1)}\left(1-\frac{1}{r}\right)-\frac{s}{2(\sigma-\sigma_1)}+1} \|u_1\|_{L^p} &\text{ if } t \in [1,\infty),
        \end{cases}\\
       \||D|^{s}u_t(t,\cdot)\|_{L^q} &\lesssim
       \begin{cases}
           t^{-\frac{n}{2(\sigma-\sigma_2)}\left(1-\frac{1}{r}\right)-\frac{s+2\sigma_2}{2(\sigma-\sigma_2)}} \|u_0\|_{L^p}+   t^{-\frac{n}{2(\sigma-\sigma_2)}\left(1-\frac{1}{r}\right)-\frac{s+2\sigma_2}{2(\sigma-\sigma_2)}+1} \|u_1\|_{L^p} &\text{ if } t \in (0,1],\\
            t^{-\frac{n}{2(\sigma-\sigma_1)}\left(1-\frac{1}{r}\right)-\frac{s+2\sigma_1}{2(\sigma-\sigma_1)}} \|u_0\|_{L^p} + t^{-\frac{n}{2(\sigma-\sigma_1)}\left(1-\frac{1}{r}\right)-\frac{s+2\sigma_1}{2(\sigma-\sigma_1)}+1} \|u_1\|_{L^p} &\text{ if } t \in [1,\infty),
       \end{cases}
    \end{align*}
    where $1+\frac{1}{q} = \frac{1}{r} + \frac{1}{p}$, for any non-negative number $a$ and for all $n \geq 1$.
\end{proposition}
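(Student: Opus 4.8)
The plan is to read off these $L^p$–$L^q$ estimates as a direct corollary of the solution representation formula together with Young's convolution inequality and the $L^r$ kernel bounds established in the preceding proposition. First I would recall the representation of the solution to \eqref{Main.Eq.3},
$$u(t,\cdot) = K_0(t,\cdot)*u_0 + K_1(t,\cdot)*u_1,$$
and differentiate it. Since the Riesz operator $|D|^s$ is a Fourier multiplier it commutes with spatial convolution, so that
$$|D|^s u(t,\cdot) = \mathfrak{F}^{-1}\!\big(|\xi|^s\widehat{K_0}\big)(t,\cdot)*u_0 + \mathfrak{F}^{-1}\!\big(|\xi|^s\widehat{K_1}\big)(t,\cdot)*u_1,$$
and likewise, using $\partial_t\widehat{K_0}$ and $\partial_t\widehat{K_1}$ from \eqref{equation2.5.1}, one obtains the analogous expression for $|D|^s u_t(t,\cdot)$.

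Next I would invoke Young's convolution inequality: with exponents tied by the relation $1+\tfrac1q=\tfrac1r+\tfrac1p$, one has $\|f*g\|_{L^q}\le\|f\|_{L^r}\|g\|_{L^p}$. Applying this to each summand and using the triangle inequality yields
$$\big\||D|^s u(t,\cdot)\big\|_{L^q}\le \big\|\mathfrak{F}^{-1}(|\xi|^s\widehat{K_0})(t,\cdot)\big\|_{L^r}\|u_0\|_{L^p}+\big\|\mathfrak{F}^{-1}(|\xi|^s\widehat{K_1})(t,\cdot)\big\|_{L^r}\|u_1\|_{L^p},$$
with the same structure for $|D|^s u_t$ in terms of the kernels $\partial_t\widehat{K_0}$ and $\partial_t\widehat{K_1}$. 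It then remains only to substitute the $L^r$ estimates of the $L^r$-estimates proposition proved just above (obtained by Littlewood interpolation between the $L^1$ bounds of Proposition \ref{Pro2.3} and the $L^\infty$ bounds of Proposition \ref{Pro2.4}), treating the two time regimes $t\in(0,1]$ and $t\in[1,\infty)$ separately; this reproduces exactly the claimed time-weights for $u$ and for $u_t$.

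There is no genuine analytic obstacle here, since all the hard work — controlling the oscillating integrals to obtain $L^1$ bounds for the kernels — was already carried out in Lemmas \ref{Lemma2.1}–\ref{lemma2.4} and Proposition \ref{Pro2.3}. The one point deserving a remark is the admissibility of the intermediate exponent: Young's inequality requires $r\in[1,\infty]$, and from $\tfrac1r=1+\tfrac1q-\tfrac1p$ together with the hypothesis $p\le q$ (here $p=m<q$) one indeed gets $\tfrac1r\in[0,1]$, so the application is legitimate. I would close by noting that the assertion "for any non-negative number $a$" in the statement should read "for any non-negative number $s$," as $s$ is the only regularity parameter appearing in the estimates.
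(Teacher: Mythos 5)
Your proposal is correct and coincides with the paper's own (implicit) argument: the proposition is stated there without a written proof precisely because it follows, exactly as you describe, from the representation $u=K_0\ast u_0+K_1\ast u_1$ (and its $t$-derivative via \eqref{equation2.5.1}), Young's convolution inequality with $1+\frac{1}{q}=\frac{1}{r}+\frac{1}{p}$, and the $L^r$ kernel bounds of the preceding proposition, themselves obtained by interpolating Proposition \ref{Pro2.3} and Proposition \ref{Pro2.4}. Your two side remarks are also accurate: admissibility of $r\in[1,\infty]$ in Young's inequality is equivalent to $p\le q$, and the phrase ``for any non-negative number $a$'' in the statement should indeed read ``for any non-negative number $s$''.
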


\begin{proposition}[$L^q-L^q$ linear estimates with additional $L^m$ regularity for the data]\label{theorem2.2}
     Let $0 < \sigma_1 < \frac{\sigma}{2} < \sigma_2 < \sigma$ in (\ref{Main.Eq.3}) and $1 \leq m < q < \infty$. Then, the solutions to (\ref{Main.Eq.3}) satisfy the following $(L^m \cap L^q)-L^q$ estimates:
     \begin{align*}
         \left\|u(t,\cdot)\right\|_{L^q} &\lesssim
         \begin{cases}
             \|u_0\|_{L^m \cap L^q} + t\|u_1\|_{L^m \cap L^q} &\text{ if } t \in (0,1],\\
             (1+t)^{-\frac{n}{2(\sigma-\sigma_1)}\left(1-\frac{1}{r}\right)}\|u_0\|_{L^m \cap L^q} \\
             \qquad \quad+ (1+t)^{-\frac{n}{2(\sigma-\sigma_1)}\left(1-\frac{1}{r}\right)+1}\|u_1\|_{L^m \cap L^q} &\text{ if } t \in [1, \infty),
         \end{cases}\\
         \left\||D|^{s}u(t,\cdot)\right\|_{L^q} &\lesssim
         \begin{cases}
             \|u_0\|_{L^m \cap H^s_q} + t^{1-\frac{s}{2(\sigma-\sigma_2)}}\|u_1\|_{L^m \cap H^{[s-2\sigma_2]^{+}}_q} &\text{ if } t \in (0,1],\\
             (1+t)^{-\frac{n}{2(\sigma-\sigma_1)}\left(1-\frac{1}{r}\right)-\frac{s}{2(\sigma-\sigma_1)}}\|u_0\|_{L^m \cap H^s_q} \\
             \qquad \quad+ (1+t)^{-\frac{n}{2(\sigma-\sigma_1)}\left(1-\frac{1}{r}\right)-\frac{s}{2(\sigma-\sigma_1)}+1}\|u_1\|_{L^m \cap H^{[s-2\sigma_2]^{+}}_q} &\text{ if } t \in [1, \infty),
         \end{cases}\\
       \left\|u_t(t,\cdot)\right\|_{L^q} &\lesssim 
       \begin{cases}
          \|u_0\|_{L^m \cap H^{2\sigma_2}_q}+ t^{1-\frac{\sigma_2}{\sigma-\sigma_2}}\|u_1\|_{L^m \cap L^q} &\text{ if } t \in (0,1],\\
           (1+t)^{-\frac{n}{2(\sigma-\sigma_1)}\left(1-\frac{1}{r}\right)-\frac{\sigma_1}{\sigma-\sigma_1}}\|u_0\|_{L^m \cap L^q} \\
           \qquad \quad+ (1+t)^{-\frac{n}{2(\sigma-\sigma_1)}\left(1-\frac{1}{r}\right)-\frac{\sigma_1}{\sigma-\sigma_1}+1}\|u_1\|_{L^m \cap L^q} &\text{ if } t \in [1, \infty),
       \end{cases} \\
       \left\||D|^{s}u_t(t,\cdot)\right\|_{L^q} &\lesssim 
       \begin{cases}
          \|u_0\|_{L^m \cap H^{s+2\sigma_2}_q}+ t^{1-\frac{\sigma_2}{\sigma-\sigma_2}}\|u_1\|_{L^m \cap H^{s}_q} &\text{ if } t \in (0,1],\\
           (1+t)^{-\frac{n}{2(\sigma-\sigma_1)}\left(1-\frac{1}{r}\right)-\frac{s+2\sigma_1}{2(\sigma-\sigma_1)}}\|u_0\|_{L^m \cap H^{s+2\sigma_2}_q} \\
           \qquad \quad+ (1+t)^{-\frac{n}{2(\sigma-\sigma_1)}\left(1-\frac{1}{r}\right)-\frac{s+2\sigma_1}{2(\sigma-\sigma_1)}+1}\|u_1\|_{L^m \cap H^{s}_q} &\text{ if } t \in [1, \infty),
       \end{cases}
     \end{align*}
     and the $L^q-L^q$ estimates
     \begin{align*}
         \left\||D|^{s}u(t,\cdot)\right\|_{L^q} &\lesssim
         \begin{cases}
             \|u_0\|_{ H^s_q} +t^{1-\frac{s}{2(\sigma-\sigma_2)}} \|u_1\|_{ H^{[s-2\sigma_2]^{+}}_q} &\text{ if } t \in (0,1],\\
             (1+t)^{-\frac{s}{2(\sigma-\sigma_1)}}\|u_0\|_{H^s_q} \\
             \qquad \quad+ (1+t)^{1-\frac{s}{2(\sigma-\sigma_1)}}\|u_1\|_{H^{[s-2\sigma_2]^{+}}_q} &\text{ if } t \in [1, \infty),
         \end{cases}\\
          \left\||D|^{s}u_t(t,\cdot)\right\|_{L^q} &\lesssim 
       \begin{cases}
           \|u_0\|_{ H^{s+2\sigma_2}_q}+t^{1-\frac{\sigma_2}{\sigma-\sigma_2}}\|u_1\|_{ H^{s}_q} &\text{ if } t \in (0,1],\\
           (1+t)^{-\frac{s+2\sigma_1}{2(\sigma-\sigma_1)}}\|u_0\|_{ H^{s+2\sigma_2}_q} \\
           \qquad \quad+ (1+t)^{1-\frac{s+2\sigma_1}{2(\sigma-\sigma_1)}}\|u_1\|_{ H^{s}_q} &\text{ if } t \in [1, \infty),
       \end{cases}
     \end{align*}
    where $1+\frac{1}{q} = \frac{1}{r} + \frac{1}{m}$, for any non-negative number $s$ and for all $n \geq 1$.
\end{proposition}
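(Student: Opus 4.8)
The plan is to start from the solution representations $u(t,x)=K_0(t,x)*u_0(x)+K_1(t,x)*u_1(x)$ and $u_t(t,x)=\partial_t K_0(t,x)*u_0(x)+\partial_t K_1(t,x)*u_1(x)$ for the homogeneous problem (\ref{Main.Eq.3}), apply $|D|^s$, and split each convolution kernel into a low-frequency piece (carrying $\chi_{\rm L}$) and a middle-high-frequency piece (carrying $\chi_{\rm M}+\chi_{\rm H}$). On each piece I would bound the $L^q$ norm of the convolution by Young's convolution inequality, choosing the Young exponents differently in the two bands. This reduces the whole statement to the kernel estimates of Propositions \ref{Pro2.3} and \ref{Pro2.4}, to the $L^r$ estimates interpolated from them, and to the frequency-localized refinements in Lemmas \ref{Lemma2.1}, \ref{Lemma2.2}, \ref{lemma2.5} and Remark \ref{remark1}.

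For the low-frequency contributions I would apply Young's inequality in the form $\|f*g\|_{L^q}\le\|f\|_{L^r}\|g\|_{L^m}$ with $1+\frac1q=\frac1r+\frac1m$, pairing the $L^m$ norm of the data with the $L^r$ norm of $\mathfrak{F}^{-1}(|\xi|^s\widehat{K_0}\chi_{\rm L})$, $\mathfrak{F}^{-1}(|\xi|^s\widehat{K_1}\chi_{\rm L})$ and their time-derivative analogues. The required $L^r$ bounds follow, via the interpolation (Littlewood's) inequality, from the low-frequency $L^1$ estimate of Lemma \ref{Lemma2.2} and the low-frequency $L^\infty$ estimate of Lemma \ref{lemma2.5}. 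Since both are governed by the parabolic-like rate $\sigma-\sigma_1$ and are regular as $t\to 0^+$, this yields exactly the factors $(1+t)^{-\frac{n}{2(\sigma-\sigma_1)}(1-\frac1r)-\frac{s}{2(\sigma-\sigma_1)}}$ (with the $\sigma_1$-shifted versions for $u_t$), which are bounded on $(0,1]$ and account for the $\|u_0\|_{L^m}$, $t\|u_1\|_{L^m}$ and similar contributions.

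For the middle-high-frequency contributions I would instead use Young's inequality with the kernel in $L^1$, i.e.\ $\|f*g\|_{L^q}\le\|f\|_{L^1}\|g\|_{L^q}$. For $t\ge1$ the relevant kernels decay like $e^{-ct}$ by Lemma \ref{Lemma2.1} and Remark \ref{remark1}, so this band is dominated by the low-frequency polynomial rate. The delicate range is $t\in(0,1]$, where the bare $L^1$ estimates of Proposition \ref{Pro2.3} carry the small-time singularity $t^{-\frac{s}{2(\sigma-\sigma_2)}}$. To absorb it I would transfer derivatives onto the data: writing $|\xi|^s(\chi_{\rm M}+\chi_{\rm H})=\frac{|\xi|^s}{\langle\xi\rangle^s}(\chi_{\rm M}+\chi_{\rm H})\,\langle\xi\rangle^s$ and folding the smooth bounded symbol $\frac{|\xi|^s}{\langle\xi\rangle^s}(\chi_{\rm M}+\chi_{\rm H})$ into the kernel, so that $\langle D\rangle^s$ falls on $u_0$, bounds the middle-high part of the $K_0$-term by $\big\|\mathfrak{F}^{-1}\big(\frac{|\xi|^s}{\langle\xi\rangle^s}\widehat{K_0}(\chi_{\rm M}+\chi_{\rm H})\big)\big\|_{L^1}\,\|u_0\|_{H^s_q}$, with the $L^1$ norm now bounded on $(0,1]$. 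For the $K_1$-part the same device transfers $[s-2\sigma_2]^+$ derivatives — using that $\widehat{K_1}\sim e^{\lambda_1 t}/|\xi|^{2\sigma_2}$ at high frequencies — and leaves a kernel whose $L^1$ norm is controlled by the factor $t^{1-\frac{s}{2(\sigma-\sigma_2)}}$ of Proposition \ref{Pro2.3}; this is the origin of the $H^{[s-2\sigma_2]^+}_q$ norm appearing together with the surviving small-time power.

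The time-derivative estimates follow the same scheme, now starting from the $\partial_t\widehat{K_0}$ and $\partial_t\widehat{K_1}$ bounds of Propositions \ref{Pro2.3} and \ref{Pro2.4}, whose exponents carry the additional $\sigma_1$- and $\sigma_2$-shifts. The pure $L^q$--$L^q$ estimates are the degenerate case $m=q$ (equivalently $r=1$), in which only the $(L^1,L^q)\to L^q$ pairing is used across the whole frequency range; they then follow directly from the global $L^1$ kernel estimates of Proposition \ref{Pro2.3} together with the same regularity transfer on $(0,1]$. The main obstacle I anticipate is precisely the justification of this transfer in the middle-high band: one must verify that multiplying $\widehat{K_0}$ (resp.\ $\widehat{K_1}$) by the cut-off symbol $|\xi|^s/\langle\xi\rangle^{[\,\cdot\,]^+}$ does not destroy the $L^1$ control, i.e.\ that the oscillatory-integral and modified-Bessel-function analysis of Lemma \ref{Lemma2.1} survives the extra smooth factor, so that the transferred kernel keeps a bounded $L^1$ norm on $(0,1]$ and an exponentially decaying one on $[1,\infty)$. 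Granting this, summing the low- and middle-high-frequency contributions and taking the larger of the two time-behaviours in each regime produces all the asserted $(L^m\cap L^q)$--$L^q$ and $L^q$--$L^q$ estimates of Proposition \ref{theorem2.2}.
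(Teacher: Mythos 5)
Your proposal is correct and follows essentially the same route as the paper's proof: decompose by frequency, bound the low-frequency kernels in $L^r$ (via the representation \eqref{2.5.5}, Lemma \ref{Lemma2.2} and the $L^\infty$ bounds of Lemma \ref{lemma2.5}, interpolated) and pair them with the $L^m$ norm of the data through Young's inequality with $1+\frac1q=\frac1r+\frac1m$, while the middle--high-frequency kernels are paired in $L^1$ against the $L^q$-based Sobolev norms of the data. Your explicit derivative-transfer step $|\xi|^s=\frac{|\xi|^s}{\langle\xi\rangle^{a}}\langle\xi\rangle^{a}$ at middle--high frequencies is precisely what the paper compresses into the phrase ``using a suitable regularity of the data $u_0$ and $u_1$ depending on the order of $s$'', so your write-up is, if anything, more detailed than the paper's own argument.
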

\begin{proof}
    First, from relationship (\ref{2.5.5}) and Lemma \ref{Lemma2.2} we can prove the following estimates
    \begin{align*}
\left\|\frak{F}^{-1}\left(|\xi|^{s}\widehat{K_0} \chi_{\rm L}(|\xi|)\right)\right\|_{L^r} &\lesssim
        \begin{cases}
            1 &\text{ if } t \in (0,1],\\
            t^{-\frac{n}{2(\sigma-\sigma_1)}\left(1-\frac{1}{r}\right)-\frac{s}{2(\sigma-\sigma_1)}} &\text{ if } t \in [1, \infty),\\
        \end{cases}\\
        \left\|\frak{F}^{-1}\left(|\xi|^{s}\widehat{K_1} \chi_{\rm L}(|\xi|)\right)\right\|_{L^r} &\lesssim
        \begin{cases}
            t &\text{ if } t \in (0,1],\\
            t^{-\frac{n}{2(\sigma-\sigma_1)}\left(1-\frac{1}{r}\right)-\frac{s}{2(\sigma-\sigma_1)}+1} &\text{ if } t \in [1, \infty),
        \end{cases}
    \end{align*}
    for all $r \in [1, \infty]$ any non-negative number $s$. Therefore, applying Young's convolution inequality and using a suitable regularity of the data $u_0$ and $u_1$ depending on the order of $s$, we may conclude all the desired estimates for the solution and some of its derivaties.
\end{proof}
\begin{corollary}\label{corollary2.2}
    Let $0 < \sigma_1 < \frac{\sigma}{2} < \sigma_2 < \sigma$ in (\ref{Main.Eq.3}) and $1 \leq m < q < \infty$. Then, the solutions to (\ref{Main.Eq.3}) satisfy the following $(L^m \cap L^q)-L^q$ estimates:
     \begin{align*}
         \left\|u(t,\cdot)\right\|_{L^q} &\lesssim
         \begin{cases}
             \|u_0\|_{L^m \cap L^q} + t\|u_1\|_{L^m \cap L^q} &\text{ if } t \in (0,1],\\
             (1+t)^{-\frac{n}{2(\sigma-\sigma_1)}\left(\frac{1}{m}-\frac{1}{q}\right)}\|u_0\|_{L^m \cap L^q} \\
             \qquad \quad+ (1+t)^{-\frac{n}{2(\sigma-\sigma_1)}\left(\frac{1}{m}-\frac{1}{q}\right)+1}\|u_1\|_{L^m \cap L^q} &\text{ if } t \in [1, \infty),
         \end{cases}\\
         \left\||D|^{s}u(t,\cdot)\right\|_{L^q} &\lesssim
         \begin{cases}
             \|u_0\|_{L^m \cap H^s_q} + t^{\frac{1}{2}}\|u_1\|_{L^m \cap H^{[s+\sigma_2-\sigma]^{+}}_q} &\text{ if } t \in (0,1],\\
             (1+t)^{-\frac{n}{2(\sigma-\sigma_1)}\left(\frac{1}{m}-\frac{1}{q}\right)-\frac{s}{2(\sigma-\sigma_1)}}\|u_0\|_{L^m \cap H^s_q} \\
             \qquad \quad+ (1+t)^{-\frac{n}{2(\sigma-\sigma_1)}\left(\frac{1}{m}-\frac{1}{q}\right)-\frac{s}{2(\sigma-\sigma_1)}+1}\|u_1\|_{L^m \cap H^{[s-2\sigma_2]^{+}}_q} &\text{ if } t \in [1, \infty),
         \end{cases}\\
       \left\|u_t(t,\cdot)\right\|_{L^q} &\lesssim 
       \begin{cases}
          \|u_0\|_{L^m \cap H^{2\sigma_2}_q}+ t^{\frac{1}{2}}\|u_1\|_{L^m \cap H^{3\sigma_2-\sigma}_q} &\text{ if } t \in (0,1],\\
           (1+t)^{-\frac{n}{2(\sigma-\sigma_1)}\left(\frac{1}{m}-\frac{1}{q}\right)-\frac{\sigma_1}{\sigma-\sigma_1}}\|u_0\|_{L^m \cap L^q} \\
           \qquad \quad+ (1+t)^{-\frac{n}{2(\sigma-\sigma_1)}\left(\frac{1}{m}-\frac{1}{q}\right)-\frac{\sigma_1}{\sigma-\sigma_1}+1}\|u_1\|_{L^m \cap L^q} &\text{ if } t \in [1, \infty),
       \end{cases}\\
       \left\||D|^{s}u_t(t,\cdot)\right\|_{L^q} &\lesssim 
       \begin{cases}
          \|u_0\|_{L^m \cap H^{s+2\sigma_2}_q}+ t^{\frac{1}{2}}\|u_1\|_{L^m \cap H^{s+3\sigma_2-\sigma}_q} &\text{ if } t \in (0,1],\\
           (1+t)^{-\frac{n}{2(\sigma-\sigma_1)}\left(\frac{1}{m}-\frac{1}{q}\right)-\frac{s+2\sigma_1}{2(\sigma-\sigma_1)}}\|u_0\|_{L^m \cap H^{s+2\sigma_2}_q} \\
           \qquad \quad+ (1+t)^{-\frac{n}{2(\sigma-\sigma_1)}\left(\frac{1}{m}-\frac{1}{q}\right)-\frac{s+2\sigma_1}{2(\sigma-\sigma_1)}+1}\|u_1\|_{L^m \cap H^{s}_q} &\text{ if } t \in [1, \infty),
       \end{cases}
     \end{align*}
     and the $L^q-L^q$ estimates
     \begin{align*}
         \left\||D|^{s}u(t,\cdot)\right\|_{L^q} &\lesssim
         \begin{cases}
             \|u_0\|_{ H^s_q} +t^{\frac{1}{2}}\|u_1\|_{ H^{[s +\sigma_2-\sigma]^{+}}_q} &\text{ if } t \in (0,1]\\
             (1+t)^{-\frac{s}{2(\sigma-\sigma_1)}}\|u_0\|_{H^s_q} \\
             \qquad \quad+ (1+t)^{1-\frac{s}{2(\sigma-\sigma_1)}}\|u_1\|_{H^{[s-2\sigma_2]^{+}}_q} &\text{ if } t \in [1, \infty),
         \end{cases}\\
          \left\||D|^{s}u_t(t,\cdot)\right\|_{L^q} &\lesssim 
       \begin{cases}
           \|u_0\|_{ H^{s+2\sigma_2}_q}+ t^{\frac{1}{2}}\|u_1\|_{ H^{s +3\sigma_2-\sigma}_q} &\text{ if } t \in (0,1],\\
           (1+t)^{-\frac{s+2\sigma_1}{2(\sigma-\sigma_1)}}\|u_0\|_{ H^{s+2\sigma_2}_q} \\
           \qquad \quad+ (1+t)^{1-\frac{s+2\sigma_1}{2(\sigma-\sigma_1)}}\|u_1\|_{ H^{s}_q} &\text{ if } t \in [1, \infty),
       \end{cases}
     \end{align*}
 for any non-negative number $s$ and for all $n \geq 1$.
\end{corollary}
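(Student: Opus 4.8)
The plan is to read off both families of bounds from the kernel estimates of Proposition~\ref{Pro2.3} by means of Young's convolution inequality, starting from the representation $u(t,\cdot)=K_0(t,\cdot)*u_0+K_1(t,\cdot)*u_1$ and reducing every time derivative to a frequency-weighted multiple of $\widehat{K_0}$ and $\widehat{K_1}$ through the identities (\ref{equation2.5.1}). For the range $t\in[1,\infty)$ the asserted estimates are literally those of Proposition~\ref{theorem2.2}, once one rewrites the spatial exponent with the Young relation $1+\frac1q=\frac1r+\frac1m$, which yields $1-\frac1r=\frac1m-\frac1q$; so nothing new is needed there. The entire content of the corollary is therefore a sharpening of the short-time range $t\in(0,1]$, where the direct bounds of Proposition~\ref{theorem2.2} degenerate as $t\to0^{+}$ as soon as the order of the seminorm exceeds $2(\sigma-\sigma_2)$.

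On $(0,1]$ I would split into the three frequency bands. The low band contributes, after Young's inequality with the $L^r$--$L^m$ pairing, the data norms with a time-uniform factor (Lemma~\ref{Lemma2.2} gives $(1+t)^{-s/2(\sigma-\sigma_1)}\lesssim1$ there), and the middle band decays exponentially (Remark~\ref{remark1}); neither produces a negative power of $t$. The whole difficulty sits in the high band, and the device is to redistribute the frequency weight between kernel and datum: writing, for a real number $\theta$,
\begin{equation*}
\big\|\frak{F}^{-1}\big(|\xi|^{s}\widehat{K_1}\chi_{\rm H}\big)*u_1\big\|_{L^q}\lesssim\big\|\frak{F}^{-1}\big(|\xi|^{\theta}\widehat{K_1}\chi_{\rm H}\big)\big\|_{L^1}\,\big\|u_1\big\|_{H^{s-\theta}_q},
\end{equation*}
and choosing $\theta$ so that the $L^1$-factor carries the prescribed short-time rate. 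The point, used repeatedly below, is that on the support of $\chi_{\rm H}$ the bounds of Proposition~\ref{Pro2.3} remain valid for \emph{arbitrary} real exponents (the proof of Lemma~\ref{Lemma2.1} only uses $|\xi|\ge1/\varepsilon^{*}$), so $\theta$ may be taken negative; this is precisely what lets us trade extra regularity of the datum for a better power of $t$.

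The four weight choices are then forced. For $|D|^{s}u$ the $\widehat{K_0}$-part takes $\theta=0$, leaving the time-uniform factor and $\|u_0\|_{H^{s}_q}$, while the $\widehat{K_1}$-part takes $\theta=\sigma-\sigma_2$, which by (\ref{estimates2.5.2}) gives the rate $t^{1/2}$ and the residual order $s-(\sigma-\sigma_2)=s+\sigma_2-\sigma$, i.e.\ $\|u_1\|_{H^{[s+\sigma_2-\sigma]^{+}}_q}$. For $|D|^{s}u_t$ one substitutes (\ref{equation2.5.1}): in $|\xi|^{s}\partial_t\widehat{K_0}=-|\xi|^{s+2\sigma}\widehat{K_1}$ the choice $\theta=2(\sigma-\sigma_2)$ makes the kernel factor time-uniform and transfers $s+2\sigma-2(\sigma-\sigma_2)=s+2\sigma_2$ derivatives onto $u_0$, while for $|\xi|^{s}\partial_t\widehat{K_1}$ one keeps $|\xi|^{\sigma-3\sigma_2}\partial_t\widehat{K_1}$ as kernel and pushes $|\xi|^{s+3\sigma_2-\sigma}$ onto the datum, so that (\ref{estimates2.5.3}) with the negative exponent $\sigma-3\sigma_2$ yields exactly $t^{1/2}$ together with $\|u_1\|_{H^{s+3\sigma_2-\sigma}_q}$. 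The cases $s=0$ of these two lines give the $u$- and $u_t$-estimates, and the $L^q$--$L^q$ bounds are the specialization $r=1$ (only the $L^1$ kernel factor and the $L^q$ part of the data enter, so no spatial decay appears). Gluing the time-uniform low/middle bands to the redistributed high band settles $(0,1]$, and concatenation with Proposition~\ref{theorem2.2} on $[1,\infty)$ closes the argument.

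I expect the main obstacle to be purely organizational rather than analytic: one has to check, quantity by quantity, that a \emph{single} exponent $\theta$ simultaneously realizes the advertised power of $t$ through Proposition~\ref{Pro2.3} and leaves exactly the advertised Sobolev index on the datum, and that the residual order is correctly truncated by $[\,\cdot\,]^{+}$, using $|\xi|^{s-\theta}\lesssim|\xi|^{[s-\theta]^{+}}$ on the support of $\chi_{\rm H}$ to absorb the weight into $H^{[s-\theta]^{+}}_q$. The one genuine subtlety is the appeal to negative exponents in Proposition~\ref{Pro2.3} for the $\partial_t\widehat{K_1}$ terms; since that proposition is stated only for non-negative $s$, one should either observe that its high-frequency proof is indifferent to the sign of the exponent or, equivalently, split $\partial_t\widehat{K_1}=\widehat{K_0}-(|\xi|^{2\sigma_1}+|\xi|^{2\sigma_2})\widehat{K_1}$ and redistribute each summand, checking that the $|\xi|^{2\sigma_2}\widehat{K_1}$ term dictates the regularity order $3\sigma_2-\sigma$ and the others are dominated on the high band.
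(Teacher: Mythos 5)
Your overall route is the paper's own: the paper states this corollary without a separate proof, as a byproduct of the Young-inequality argument behind Proposition \ref{theorem2.2}, and redistributing the frequency weight $|\xi|^{\theta}$ between kernel and datum on the high band is exactly the intended mechanism. Your treatment of the $\|u(t,\cdot)\|_{L^q}$ and $\||D|^{s}u(t,\cdot)\|_{L^q}$ bounds is sound, because there you invoke Proposition \ref{Pro2.3} only with \emph{non-negative} exponents: $\theta=0$ for $\widehat{K_0}$, $\theta=\sigma-\sigma_2$ for $\widehat{K_1}$ (giving $t^{1/2}$), and $\theta=2(\sigma-\sigma_2)$ for $\partial_t\widehat{K_0}=-|\xi|^{2\sigma}\widehat{K_1}$; the absorption $|\xi|^{s+\sigma_2-\sigma}\lesssim \langle \xi\rangle^{[s+\sigma_2-\sigma]^{+}}$ on the support of $\chi_{\rm H}$ is legitimate for $1<q<\infty$.

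The step you flagged as "the one genuine subtlety" is, however, not a subtlety but a fatal gap for the $u_t$- and $|D|^{s}u_t$-estimates. The bounds of Proposition \ref{Pro2.3} do \emph{not} persist for negative exponents applied to $\partial_t\widehat{K_1}$: on the high band one has $\partial_t\widehat{K_1}=\frac{\lambda_1e^{\lambda_1t}-\lambda_2e^{\lambda_2t}}{\lambda_1-\lambda_2}$, whose summand $\frac{-\lambda_2e^{\lambda_2t}}{\lambda_1-\lambda_2}\sim e^{\lambda_2t}$ equals $1$ at $t=0$; hence for $\theta=\sigma-3\sigma_2<0$ the kernel $\mathfrak{F}^{-1}\big(|\xi|^{\theta}\partial_t\widehat{K_1}\chi_{\rm H}\big)$ converges, as $t\to0^{+}$, to the fixed nonzero $L^1$ function $\mathfrak{F}^{-1}\big(|\xi|^{\theta}\chi_{\rm H}\big)$, so its $L^1$ norm is bounded from below and cannot be $O(t^{1/2})$. (The $t$-powers in (\ref{estimates2.5.3}) come from the scaling adapted to $e^{\lambda_1t}$; for $s\ge0$ the $e^{\lambda_2t}$-part happens to obey the same non-sharp bound, for $s<0$ it does not.) Your fallback via (\ref{equation2.5.1}) fails for the identical reason: $\widehat{K_0}\chi_{\rm H}\sim e^{\lambda_1t}\chi_{\rm H}\to\chi_{\rm H}$, so $\big\|\mathfrak{F}^{-1}\big(|\xi|^{\sigma-3\sigma_2}\widehat{K_0}\chi_{\rm H}\big)\big\|_{L^1}\approx1$ rather than $t^{1/2}$. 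Moreover no repair is possible, because the asserted short-time inequalities are themselves false: since $\partial_t\widehat{K_1}(0,\xi)=1$, taking $u_0=0$ and any nonzero Schwartz $u_1$ gives $\|u_t(t,\cdot)\|_{L^q}\to\|u_1\|_{L^q}>0$ as $t\to0^{+}$, while the claimed right-hand side $t^{1/2}\|u_1\|_{L^m\cap H^{3\sigma_2-\sigma}_q}$ tends to $0$; the same applies to the $L^q$-$L^q$ version. What is true, and what the paper actually uses in Section \ref{Sec.3} (the short-time Duhamel integrals carry no time factor), is the version with $t^{1/2}$ replaced by a constant, e.g. $\|\partial_tK_1(t,\cdot)\ast u_1\|_{L^q}\lesssim\|u_1\|_{H^{3\sigma_2-\sigma}_q}$ for $t\in(0,1]$, which your redistribution does yield once the kernel factor is only claimed to be $O(1)$. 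So your proposal proves the first two families of estimates, but asserts that a step goes through for the time-derivative families when it demonstrably does not.
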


\section{Global (in time) existence of small data solutions}\label{Sec.3}
\subsection{Auxiliary estimates}
In this section, we will use the decay estimates for $(\ref{Main.Eq.3})$, which are obtained in Proposition \ref{theorem2.2} and Corollary \ref{corollary2.2} to prove the corresponding Theorems \ref{theorem1.0}-\ref{theorem1.3}. Our main tools are Duhamel’s principle and Gagliardo–Nirenberg inequality. By using fundamental solutions we write the solution of (\ref{Main.Eq.3}) in the form 
$$
u(t,x) = K_0(t,x) * u_0(x) + K_1(t,x) * u_1(x),
$$
then the solution to $(\ref{Main.Eq.3})$ becomes
\begin{equation}\label{2.31}
    u(t,x) = K_0(t,x) \ast_x u_0(x) + K_1(t,x) \ast_x u_1(x) + \int_0^t K_1(t-\tau,x) \ast_x |u(\tau,x)|^p d\tau.
\end{equation}
We consider the space of data  $\frak{D}_{m,q}^{s} := (L^m \cap H^{s}_q) \times (L^m \cap H^{[s+\sigma_2-\sigma]^{+}}_q)$ , $m \in [1,q)$ and the function spaces $X(t) := C([0,t], H^s_q) \cap C^1([0,t], H^{[s-2\sigma_2]^{+}}_q) $ for all $t > 0$ with the norm
\begin{align*}
    \|u\|_{X(t)}:=
    \sup _{0 \leq \tau \leq t}&\big(f_1(\tau)^{-1}\|u(\tau, \cdot)\|_{L^q}+f_{2,s}(\tau)^{-1}\left\||D|^{s} u(\tau, \cdot)\right\|_{L^q}\\
&\qquad \quad+f_3(\tau)^{-1}\left\|u_t(\tau, \cdot)\right\|_{L^q}+f_{4,s}(\tau)^{-1} \||D|^{s-2\sigma_2}u_t(\tau,\cdot)\|_{L^q}\big),
\end{align*}
where from the estimates of Corollary \ref{corollary2.2} we choose 
\begin{align*}
    f_1(\tau) &:= (1+\tau)^{1-\frac{n}{2(\sigma-\sigma_1)}(\frac{1}{m}-\frac{1}{q})}, \\
    f_{2,s}(\tau) &:=  (1+\tau)^{1-\frac{n}{2(\sigma-\sigma_1)}(\frac{1}{m}-\frac{1}{q}) - \frac{s}{2(\sigma-\sigma_1)}},\\
    f_3(\tau) &:= (1+\tau)^{1-\frac{n}{2(\sigma-\sigma_1)}(\frac{1}{m}-\frac{1}{q})-\frac{\sigma_1}{\sigma-\sigma_1}},\\
    f_{4,s}(\tau) &:=(1+\tau)^{1-\frac{n}{2(\sigma-\sigma_1)}(\frac{1}{m}-\frac{1}{q})-\frac{s}{2(\sigma-\sigma_1)}+\frac{\sigma_2-\sigma_1}{\sigma-\sigma_1}}.
\end{align*}
We define for any $u \in X(t)$ the operator 
\begin{align*}
    N \text{ : } &u \in X(t) \longrightarrow Nu \in X(t)  \\
    Nu(t,x) &= K_0(t,x) \ast_x u_0(x) + K_1(t,x) \ast_x u_1(x) + \int_0^t K_1(t-\tau,x) \ast_x |u(\tau,x)|^p d\tau \\
   &= u^{\rm lin}(t,x) + u^{\rm non}(t,x).
\end{align*}
The mapping into $X(t)$ follows from estimate 
\begin{equation}\label{estimates3.1.1}
    \|Nu(t,\cdot)\|_{X(t)} \lesssim \|(u_0,u_1)\|_{\frak{D}_{m,q}^s} + \|u(t,\cdot)\|_{X(t)}^p.
\end{equation}
Moreover, we show the Lipschitz property 
\begin{equation}\label{estimates3.1.2}
    \|Nu(t,\cdot)-Nv(t,\cdot)\|_{X(t)} \lesssim \|u(t,\cdot)-v(t,\cdot)\|_{X(t)}\left(\|u(t,\cdot)\|_{X(t)}^{p-1} + \|v(t,\cdot)\|_{X(t)}^{p-1}\right).
\end{equation}
If we are able to prove $(\ref{estimates3.1.1})$ and $(\ref{estimates3.1.2})$, then standard arguments imply on the one hand local in time solutions
for arbitrary data and on the other hand global in time solutions for small data as well(see [1]). The decay
estimates for the solution and its energy follow immediately with the definition of the norm in X(t).
\begin{remark}
    From the definition of the norm in $X(t)$ and the statements of 
    Corollary \ref{corollary2.2} we conclude
\begin{equation}\label{estimates1.1.5}
    \|u^{\rm lin}\|_{X(t)} \lesssim \|(u_0,u_1)\|_{\frak{D}_{m,q}^s}.
\end{equation}
For this reason, to complete the proof of (\ref{estimates3.1.1}) we need to show the following inequality:
\begin{equation}\label{estimates1.1.6}
    \|u^{\rm non}\|_{X(t)} \lesssim \|u\|_{X(t)}^p.
\end{equation}
\end{remark}
\subsection{Proof of Theorem \ref{theorem1.0}}
We introduction the data space $\frak{D}^{s}_{m,q}$ and the solution space 
\begin{align*}
    X(t) := C([0,t], H^s_q),
\end{align*}
where the weight $f_3(\tau) = f_{4,s}(\tau) = 0$. Let us prove the inequality (\ref{estimates1.1.6}). From Corollary \ref{corollary2.2} we have
 \begin{align*}
    \|u^{\rm non}(t,\cdot)\|_{L^q} \lesssim& \int_0^{[t-1]^{+}} (1+t-\tau)^{1-\frac{n}{2(\sigma-\sigma_1)}(\frac{1}{m}-\frac{1}{q})} \left\||u(\tau, \cdot)|^p\right\|_{L^m \cap L^q} d\tau\\
    &+ \int_{[t-1]^{+}}^t (t-\tau) \left\||u(\tau, \cdot)|^p\right\|_{L^q} d\tau,\\
     \||D|^{s}u^{\rm non}(t,\cdot)\|_{L^q} \lesssim& \int_0^{[t-1]^{+}} (1+t-\tau)^{1-\frac{n}{2(\sigma-\sigma_1)}(\frac{1}{m}-\frac{1}{q})-\frac{s}{2(\sigma-\sigma_1)}}\left\||u(\tau, \cdot)|^p\right\|_{L^m\cap L^q} d\tau\\
    &+ \int_{[t-1]^{+}}^t \left\||u(\tau, \cdot)|^p\right\|_{L^q\cap \dot{H}^{[s-\sigma+\sigma_2]^{+}}_q} d\tau.
\end{align*}
Hence, it is necessary to require the estimates for $|u(\tau,\cdot)|^p$ in $L^m \cap L^q$ and $L^q$ as follows:
\begin{equation*}
    \||u(\tau,\cdot)|^p\|_{L^m \cap L^q} \lesssim \|u(\tau,\cdot)\|_{L^{mp}}^p +\|u(\tau,\cdot)\|_{L^{qp}}^p \text{ and } \||u(\tau,\cdot)|^p\|_{L^q} = \|u(\tau,\cdot)\|_{L^{qp}}^p.
\end{equation*}
Applying the fractional Gagliardo-Nirenberg inequality we can conclude
\begin{align}
    \|u(\tau,\cdot)\|_{L^{mp}}^p \lesssim & \|u(\tau,\cdot)\|_{L^q}^{p(1-\theta_1)}\|u(\tau,\cdot)\|_{\dot{H}^s_q}^{p\theta_1}\notag\\
    \lesssim &(1+\tau)^{p -\frac{n}{2m(\sigma-\sigma_1)}(p-1)} \|u\|_{X(t)}^p, \label{Main.Es1.0.1}\\
    \|u(\tau,\cdot)\|_{L^{qp}}^p
    \lesssim & \|u(\tau,\cdot)\|_{L^q}^{p(1-\theta_2)}\|u(\tau,\cdot)\|_{\dot{H}^s_q}^{p\theta_2}\notag\\
    \lesssim & (1+\tau)^{p-\frac{np}{2(\sigma-\sigma_1)}(\frac{1}{m}-\frac{1}{qp})} \|u\|_{X(t)}^p, \label{Main.Es1.0.2}
\end{align}
where $\theta_1 = \frac{n}{s}\left(\frac{1}{q}-\frac{1}{mp}\right) \in [0,1]$ and $\theta_2 = \frac{n}{s}\left(\frac{1}{q}-\frac{1}{qp}\right) \in [0,1]$. This condition implies
\begin{align*}
    p \in \left[\frac{q}{m}, \infty\right) \text{ if } n \leq qs \text{ or } p \in \left[\frac{q}{m}, \frac{n}{n-qs}\right] \text{ if } n \in \left(qs, \frac{q^2s}{q-m}\right].
\end{align*}
 The condition (\ref{condition1.0.1}) implies
\begin{align}
    p-\frac{n}{2m(\sigma-\sigma_1)}(p-1) &< -1,\label{result1.1.1}\\
    p-\frac{np}{2(\sigma-\sigma_1)}\left(\frac{1}{m}-\frac{1}{qp}\right) &= \left(p-\frac{n}{2m(\sigma-\sigma_1)}(p-1)\right) - \frac{n}{2(\sigma-\sigma_1)}\left(\frac{1}{m}-\frac{1}{q}\right) \notag\\
    &< -1- \frac{n}{2(\sigma-\sigma_1)}\left(\frac{1}{m}-\frac{1}{q}\right).\label{result1.0.2}
\end{align}
 Now, we evaluate the integral containing the term $\||u(\tau,\cdot)|^p\|_{\dot{H}^{[s-\sigma+\sigma_2]^{+}}_q}$. Here we only need to pay attention to the case $s > \sigma-\sigma_2$.
 \begin{itemize}[leftmargin=*]
     \item If $s-\sigma+\sigma_2 \leq \frac{n}{q}$, applying fractional chain rule with $p >  \lceil s-\sigma+\sigma_2 \rceil$ and Gagliardo- Nirenberg inequality we obtain
 \begin{align*}
     \||u(\tau,\cdot)|^p\|_{\dot{H}^{s-\sigma+\sigma_2}_q} \lesssim& \|u(\tau,\cdot)\|_{L^{q_1}}^{p-1}\||D|^{s-\sigma+\sigma_2}u(\tau,\cdot)\|_{L^{q_2}}\notag\\
     \lesssim& \|u(\tau,\cdot)\|_{L^q}^{(p-1)(1-\theta_3)+1-\theta_4} \|u(\tau,\cdot)\|_{\dot{H}^s_q}^{(p-1)\theta_3+\theta_4}\notag\\
     \lesssim& (1+\tau)^{p-\frac{np}{2(\sigma-\sigma_1)}\left(\frac{1}{m}-\frac{1}{qp}\right)-\frac{s-\sigma+\sigma_2}{2(\sigma-\sigma_1)}} \|u\|_{X(t)}^p,
 \end{align*}
 where $\theta_3 = \frac{n}{s}\left(\frac{1}{q}-\frac{1}{q_1}\right) \in [0,1]$ , $\theta_4 = \frac{n}{s}\left(\frac{1}{q}-\frac{1}{q_2}+\frac{s-\sigma+\sigma_2}{n}\right) \in \left[\frac{s-\sigma+\sigma_2}{s}, 1\right]$ and $\frac{p-1}{q_1}+\frac{1}{q_2} = \frac{1}{q}$. This condition implies
 \begin{align*}
     p > 1 \text{ if } n \in [q(s-\sigma+\sigma_2), qs] \text{ or } p \in \left(1, \frac{n-q(s-\sigma+\sigma_2)}{n-qs}\right] \text{ if } n > qs.
 \end{align*}
 \item If $s-\sigma+\sigma_2 > \frac{n}{q}$, applying fractional powers, fractional Sobolev embedding and Gagliardo-Nirenberg inequality we obtain
 \begin{align*}
     \| |u(\tau,\cdot)|^p\|_{\dot{H}^{s-\sigma+\sigma_2}_q} &\lesssim \|u(\tau,\cdot)\|_{L^{\infty}}^{p-1} \|u(\tau,\cdot)\|_{\dot{H}^{s-\sigma+\sigma_2}_q}\\
     &\lesssim \|u(\tau,\cdot)\|_{\dot{H}^{s-\sigma+\sigma_2}_q} \|u(\tau,\cdot)\|_{\dot{H}^{s^{*}}_q}^{p-1}\\
     &\lesssim (1+\tau)^{p(1-\frac{n}{2(\sigma-\sigma_1)}(\frac{1}{m}-\frac{1}{q}))-\frac{s-\sigma+\sigma_2}{2(\sigma-\sigma_1)}-(p-1)\frac{s^{*}}{2(\sigma-\sigma_1)}}\|u\|_{X(t)}^p\\
     &\lesssim (1+\tau)^{p-\frac{np}{2(\sigma-\sigma_1)}(\frac{1}{m}-\frac{1}{qp})}\|u\|_{X(t)}^p,
 \end{align*}
 \end{itemize}
where we choose $s*= \frac{n}{q}-\epsilon$ with $\epsilon$ is a sufficiently small positive number. Summarizing, we get estimate
 \begin{align}
      \| |u(\tau,\cdot)|^p\|_{\dot{H}^{s-\sigma+\sigma_2}_q} \lesssim (1+\tau)^{p-\frac{np}{2(\sigma-\sigma_1)}(\frac{1}{m}-\frac{1}{qp})}\|u\|_{X(t)}^p.\label{Main.Es1.0.3}
 \end{align}
 From estimates (\ref{Main.Es1.0.1}), (\ref{Main.Es1.0.2}), (\ref{Main.Es1.0.3}) and result (\ref{result1.0.2}) we obtain
 \begin{align*}
     \| u^{\rm non}(t,\cdot)\|_{L^q} &\lesssim (1+t)^{1-\frac{n}{2(\sigma-\sigma_1)}(\frac{1}{m}-\frac{1}{q})} \|u\|_{X(t)}^p,\\
      \||D|^s u^{\rm non}(t,\cdot)\|_{L^q} &\lesssim (1+t)^{1-\frac{n}{2(\sigma-\sigma_1)}(\frac{1}{m}-\frac{1}{q})-\frac{s}{2(\sigma-\sigma_1)}} \|u\|_{X(t)}^p.
 \end{align*}
 From here we get the estimate (\ref{estimates1.1.6}). Next, we need to prove the estimate (\ref{estimates3.1.2}). Using again the $(L^m-L^q)-L^q$ estimates if $\tau \in [0, [t-1]^{+}]$ and the $L^q - L^q$ estimates if $\tau \in [[t-1]^{+}, t]$ from Corollary \ref{corollary2.2}, we derive for two functions $u$ and $v$ from $X(t)$ the following estimate:
\begin{align*}
    &\left\||D|^{ks}(Nu(t,\cdot)-Nv(t,\cdot))\right\|_{L^q} \\
    &\quad \lesssim \int_0^{[t-1]^{+}} (1+t-\tau)^{1-\frac{n}{2(\sigma-\sigma_1)}(\frac{1}{m}-\frac{1}{q})-\frac{ks}{2(\sigma-\sigma_1)}} \left\||u(\tau, \cdot)|^p-|v(\tau,\cdot)|^p\right\|_{L^m \cap L^q} d\tau\\
    &\qquad + \int_{[t-1]^{+}}^t \left\||u(\tau, \cdot)|^p-|v(\tau,\cdot)|^p\right\|_{L^q \cap \dot{H}^{[s-\sigma+\sigma_2]^{+}}_q} d\tau,
\end{align*}
where $k = 0,1$. By using H\"{o}lder's inequality, we get
\begin{align*}
\big\||u(\tau,\cdot)|^p-|v(\tau,\cdot)|^p\big\|_{L^q}& \lesssim \|u(\tau,\cdot)-v(\tau,\cdot)\|_{L^{qp}} \big(\|u(\tau,\cdot)\|^{p-1}_{L^{qp}}+\|v(\tau,\cdot)\|^{p-1}_{L^{qp}}\big),\\
\big\||u(\tau,\cdot)|^p-|v(\tau,\cdot)|^p\big\|_{L^m}& \lesssim \|u(\tau,\cdot)-v(\tau,\cdot)\|_{L^{mp}} \big(\|u(\tau,\cdot)\|^{p-1}_{L^{mp}}+\|v(\tau,\cdot)\|^{p-1}_{L^{mp}}\big).
\end{align*}
Applying the fractional Gagliardo-Nirenberg inequality to the terms
$$ \|u(\tau,\cdot)-v(\tau,\cdot)\|_{L^\eta }, \text{ }\|u(\tau,\cdot)\|_{L^\eta}, \text{ }\|v(\tau,\cdot)\|_{L^\eta} $$
with $\eta=qp$ or $\eta=mp$ we can conclude the following estimates:
\begin{align}
    \big\||u(\tau,\cdot)|^p-|v(\tau,\cdot)|^p\big\|_{L^m} \lesssim& (1+\tau)^{p-\frac{n}{2m(\sigma-\sigma_1)}(p-1)}\|u-v\|_{X(t)}\left(\|u\|_{X(t)}^{p-1}+\|v\|_{X(t)}^{p-1}\right),\label{Main.Es1.0.4}\\ 
     \big\||u(\tau,\cdot)|^p-|v(\tau,\cdot)|^p\big\|_{L^q} \lesssim& (1+\tau)^{p-\frac{np}{2(\sigma-\sigma_1)}(\frac{1}{m}-\frac{1}{qp})}\|u-v\|_{X(t)}\left(\|u\|_{X(t)}^{p-1}+\|v\|_{X(t)}^{p-1}\right)\label{Main.Es.1.0.5}.
\end{align}
We now focus on our attention to estimate $\big\||u(\tau,\cdot)|^p-|v(\tau,\cdot)|^p\big\|_{\dot{H}^{s-\sigma+\sigma_2}_q}$ with $s > \sigma-\sigma_2$. By using the integral representation
$$ |u(\tau,x)|^p-|v(\tau,x)|^p=p\int_0^1 \big(u(\tau,x)-v(\tau,x)\big)G\big(\omega u(\tau,x)+(1-\omega)v(\tau,x)\big)d\omega, $$
where $G(u)=u|u|^{p-2}$, we obtain
$$\big\||u(\tau,\cdot)|^p-|v(\tau,\cdot)|^p\big\|_{\dot{H}^{s-\sigma+\sigma_2}_q} \lesssim \int_0^1 \Big\||D|^{s-\sigma+\sigma_2}\Big(\big(u(\tau,\cdot)-v(\tau,\cdot)\big)G\big(\omega u(\tau,\cdot)+(1-\omega)v(\tau,\cdot)\big)\Big)\Big\|_{L^q}d\omega. $$
\begin{itemize}[leftmargin=*]
    \item If $s-\sigma+\sigma_2 \leq \frac{n}{q}$, thanks to the fractional Leibniz rule with $p > 1+\lceil s-\sigma+\sigma_2 \rceil$ we can proceed as follow:
\begin{align*}
    &\left\| | u(\tau,\cdot)|^p -|v(\tau,\cdot)|^p\right\|_{\dot{H}^{s-\sigma+\sigma_2}_q}\\
    &\qquad\lesssim \| |D|^{s-\sigma+\sigma_2}\left(u(\tau,\cdot)-v(\tau,\cdot)\right)\|_{L^{r_1}} \int_0^1 \left\|G\left(\omega u(\tau,\cdot)+(1-\omega)
v(\tau,\cdot)\right)\right\|_{L^{r_2}} d\omega\\
&\qquad\qquad+ \| u(\tau,\cdot)- v(\tau,\cdot)\|_{L^{r_3}} \int_0^1 \left\| |D|^{s-\sigma+\sigma_2}G\left(\omega u(\tau,\cdot)+(1-\omega) v(\tau,\cdot)\right)\right\|_{L^{r_4}} d\omega\\
&\qquad\lesssim \| |D|^{s-\sigma+\sigma_2}\left(u(\tau,\cdot)-v(\tau,\cdot)\right)\|_{L^{r_1}}\left(\|u(\tau,\cdot)\|_{L^{r_2(p-1)}}^{p-1}+ \|v(\tau,\cdot)\|_{L^{r_2(p-1)}}^{p-1}\right)\\
&\qquad\quad + \| |u(\tau,\cdot)- v(\tau,\cdot)\|_{L^{r_3}} \int_0^1 \left\| |D|^{s-\sigma+\sigma_2}G\left(\omega u(\tau,\cdot)+(1-\omega) v(\tau,\cdot)\right)\right\|_{L^{r_4}} d\omega,
\end{align*}
where 
\begin{equation*}
    \frac{1}{r_1}+\frac{1}{r_2}= \frac{1}{r_3} +\frac{1}{r_4} = \frac{1}{q}.
\end{equation*}
Employing the fractional Garliardo-Nirenberg inequality implies
\begin{align*}
   \| |D|^{s-\sigma+\sigma_2}\left(u(\tau,\cdot)-v(\tau,\cdot)\right)\|_{L^{r_1}} &\lesssim \|u(\tau,\cdot)-v(\tau,\cdot)\|_{\dot{H}^s_q}^{a_1} \|u(\tau,\cdot)-v(\tau,\cdot)\|_{L^q}^{1-a_1},\\
   \|u(\tau,\cdot)\|_{L^{r_2(p-1)}} &\lesssim \|u(\tau,\cdot)\|_{\dot{H}^{s}_q}^{a_2} \|u(\tau,\cdot)\|_{L^q}^{1-a_2},\\
   \left\| u(\tau,\cdot)- v(\tau,\cdot)\right\|_{L^{r_3}} &\lesssim \|u(\tau,\cdot)-v(\tau,\cdot)\|_{\dot{H}^s_q}^{a_3} \|u(\tau,\cdot)-v(\tau,\cdot)\|_{L^q}^{1-a_3},
\end{align*}
where 
\begin{align*}
    a_1 &= \frac{n}{s}\left(\frac{1}{q}-\frac{1}{r_1}+\frac{s-\sigma+\sigma_2}{n}\right) \in \left[\frac{s-\sigma+\sigma_2}{s}, 1\right], \\
    a_2 &= \frac{n}{s}\left(\frac{1}{q}-\frac{1}{r_2(p-1)}\right) \in \left[0, 1\right],\\
    a_3 &= \frac{n}{s}\left(\frac{1}{q}-\frac{1}{r_3}\right) \in \left[0, 1\right].
\end{align*}
Moreover, since $\omega \in [0,1]$ is a parameter, we may apply again the fractional chain rule with $p > 1+\lceil s-\sigma+\sigma_2 \rceil$ and the fractional Gagliardo-Nirenberg inequality to conclude
\begin{align*}
    &\left\| |D|^{s-\sigma+\sigma_2}G\left(\omega u(\tau,\cdot)+(1-\omega) v(\tau,\cdot)\right)\right\|_{L^{r_4}} \\
    &\qquad\lesssim \left\| \omega u(\tau,\cdot)+(1-\omega) v(\tau,\cdot)\right\|_{L^{r_5}}^{p-2} \left\||D|^{s-\sigma+\sigma_2}\left(\omega u(\tau,\cdot)+(1-\omega) v(\tau,\cdot)\right) \right\|_{L^{r_6}}\\
    &\qquad\lesssim \| \omega u(\tau,\cdot)+(1-\omega) v(\tau,\cdot)\|_{\dot{H}^s_q}^{(p-2)a_5+a_6} \|\omega u(\tau,\cdot) +(1-\omega)v(\tau,\cdot)\|_{L^q}^{(p-2)(1-a_5)+1-a_6},
\end{align*}
where 
\begin{align*}
    a_5 &= \frac{n}{s}\left(\frac{1}{q}-\frac{1}{r_5}\right) \in \left[0, 1\right],\\
    a_6 &= \frac{n}{s}\left(\frac{1}{q}-\frac{1}{r_6}+\frac{s-\sigma+\sigma_2}{n}\right) \in \left[\frac{s-\sigma+\sigma_2}{s}, 1\right],
\end{align*}
and 
\begin{equation*}
      \frac{p-2}{r_5}+\frac{1}{r_6} = \frac{1}{r_4}.
\end{equation*}
Hence, we derive 
\begin{align*}
    &\int_0^1 \left\| |D|^{s-\sigma+\sigma_2}G\left(\omega u(\tau,\cdot)+(1-\omega) v(\tau,\cdot)\right)\right\|_{L^{r_4}} d\omega\\
    &\qquad\lesssim \left(\|u(\tau,\cdot)\|_{\dot{H}^s_q}+\|v(\tau,\cdot)\|_{\dot{H}^s_q}\right)^{(p-2)a_5+a_6} \left(\|u(\tau,\cdot)\|_{L^q}+\|v(\tau,\cdot)\|_{L^q}\right)^{(p-2)a_5+1-a_6}.
\end{align*}
Therefore, we conclude
\begin{align*}
    &\| |u(\tau,\cdot)|^p-|v(\tau,\cdot)|^p \|_{\dot{H}^{s-\sigma+\sigma_2}_q} \notag\\
    &\qquad\lesssim (1+\tau)^{p-\frac{np}{2(\sigma-\sigma_1)}(\frac{1}{m}-\frac{1}{qp})-\frac{s-\sigma+\sigma_2}{2(\sigma-\sigma_1)}} \|u-v\|_{X(t)}\left(\|u\|_{X(t)}^{p-1} + \|v\|_{X(t)}^{p-1}\right).
\end{align*}
\item If $s-\sigma+\sigma_2 > \frac{n}{q}$, thanks to the fractional Leibniz formula, we can proceed as follows:
\begin{align*}
&\big\||u(\tau,\cdot)|^p-|v(\tau,\cdot)|^p\big\|_{\dot{H}^{s-\sigma+\sigma_2}_q} \\
&\qquad\lesssim \big\||D|^{s-\sigma+\sigma_2}\big(u(\tau,\cdot)-v(\tau,\cdot)\big)\big\|_{L^{q}} \int_0^1 \big\|G\big(\omega u(\tau,\cdot)+(1-\omega)v(\tau,\cdot)\big)\big\|_{L^{\infty}}d\omega\\
&\qquad\quad + \|u(\tau,\cdot)-v(\tau,\cdot)\|_{L^{q}}  \int_0^1 \big\||D|^{s-\sigma+\sigma_2}G\big(\omega u(\tau,\cdot)+(1-\omega)v(\tau,\cdot)\big)\big\|_{L^{\infty}}d\omega\\
&\qquad\lesssim \big\||D|^{s-\sigma+\sigma_2}\big(u(\tau,\cdot)-v(\tau,\cdot)\big)\big\|_{L^{q}} \Big(\|u(\tau,\cdot)\|^{p-1}_{L^{\infty}}+ \|v(\tau,\cdot)\|^{p-1}_{L^{\infty}}\Big)\\
&\qquad\quad + \|u(\tau,\cdot)-v(\tau,\cdot)\|_{L^{\infty}}  \int_0^1 \big\||D|^{s-\sigma+\sigma_2}G\big(\omega u(\tau,\cdot)+(1-\omega)v(\tau,\cdot)\big)\big\|_{L^{q}}d\omega.
\end{align*}
Employing the fractional Sobolev embedding and the fractional Gagliardo-Nirenberg inequality imply
\begin{align*}
\|u(\tau,\cdot)\|_{L^{\infty}}&\lesssim \|u(\tau,\cdot)\|_{\dot{H}^{s^{*}}_q} + \|u(\tau,\cdot)\|_{\dot{H}^{s-\sigma+\sigma_2}_q}\\
&\lesssim \|u(\tau,\cdot)\|_{L^q}^{1-\theta_2}\||D|^su(\tau,\cdot)\|_{L^q}^{\theta_2} \\
&\lesssim (1+\tau)^{1-\frac{n}{2(\sigma-\sigma_1)}(\frac{1}{m}-\frac{1}{q})-\frac{s^{*}}{2(\sigma-\sigma_1)}}\|u\|_{X(t)}, \\
\|u(\tau,\cdot)-v(\tau,\cdot)\|_{L^{\infty}} &\lesssim (1+\tau)^{1-\frac{n}{2(\sigma-\sigma_1)}(\frac{1}{m}-\frac{1}{q})-\frac{s^{*}}{2(\sigma-\sigma_1)}}\|u-v\|_{X(t)},
\end{align*}
where $\theta_2 = \frac{s^{*}}{s}$. Moreover, since $\omega \in [0,1]$ is a parameter, we may apply again the fractional chain rule  with $p >1+ s-\sigma+\sigma_2$ and the fractional Gagliardo-Nirenberg inequality to conclude
\begin{align*}
&\big\||D|^{s-\sigma+\sigma_2}G\big(\omega u(\tau,\cdot)+(1-\omega)v(\tau,\cdot)\big)\big\|_{L^{q}}\\
&\qquad \lesssim \|\omega u(\tau,\cdot)+(1-\omega)v(\tau,\cdot)\|^{p-2}_{L^{\infty}}\,\, \big\||D|^{s}\big(\omega u(\tau,\cdot)+(1-\omega)v(\tau,\cdot)\big)\big\|_{L^{q}}\\
&\qquad \lesssim (1+\tau)^{(p-2)(1-\frac{n}{2(\sigma-\sigma_1)}(\frac{1}{m}-\frac{1}{q})-\frac{s^{*}}{2(\sigma-\sigma_1)})+1-\frac{n}{2(\sigma-\sigma_1)}(\frac{1}{m}-\frac{1}{q})-\frac{s-\sigma+\sigma_2}{2(\sigma-\sigma_1)})}\left(\|u\|_{X(t)}^{p-1}+\|v\|_{X(t)}^{p-1}\right).
\end{align*}
By choosing $s^{*}= \frac{n}{q}-\epsilon$ with a sufficiently small positive number $\epsilon$, we conclude
\begin{equation*}
\big\||u(\tau,\cdot)|^p-|v(\tau,\cdot)|^p\big\|_{\dot{H}^{s-\sigma+\sigma_2}_q} \lesssim (1+\tau)^{p-\frac{np}{2(\sigma-\sigma_1)}(\frac{1}{m}-\frac{1}{qp})}\|u-v\|_{X(t)}\big( \|u\|^{p-1}_{X(t)}+ \|v\|^{p-1}_{X(t)} \big).
\end{equation*}
\end{itemize}
Summarizing, we get estimates
\begin{align}
    \big\||u(\tau,\cdot)|^p-|v(\tau,\cdot)|^p\big\|_{\dot{H}^{s-\sigma+\sigma_2}_q} \lesssim (1+\tau)^{p-\frac{np}{2(\sigma-\sigma_1)}(\frac{1}{m}-\frac{1}{qp})}\|u-v\|_{X(t)}\big( \|u\|^{p-1}_{X(t)}+ \|v\|^{p-1}_{X(t)} \big).\label{Main.es1.0.6}
\end{align}
From estimates (\ref{Main.Es1.0.4}), (\ref{Main.Es.1.0.5}) and (\ref{Main.es1.0.6}) we obtain
\begin{align*}
     \left\||D|^{ks}(Nu(t,\cdot)-Nv(t,\cdot))\right\|_{L^q} \lesssim (1+t)^{1-\frac{n}{2(\sigma-\sigma_1)}(\frac{1}{m}-\frac{1}{q})-\frac{ks}{2(\sigma-\sigma_1)}}\|u-v\|_{X(t)} \left(\|u\|_{X(t)}^{p-1} +\|v\|_{X(t)}^{p-1}\right),
\end{align*}
where $k = 0,1$. From here we get the estimate (\ref{estimates3.1.2}). Summarizing, Theorem \ref{theorem1.0} is proved completely.

\subsection{Proof of Theorem \ref{theorem1.2}} We introduce the data space $\frak{D}_{m,q}^{2\sigma_2}$ and the solution space
\begin{equation*}
    X(t) := C([0,t], H^s_q) \cap C^1([0,t], L^q),
\end{equation*}
where the weight $f_{4,s}(\tau) = 0$. We have
\begin{align*}
    &\left\|\partial_t^j |D|^{2k\sigma_2} u^{\rm non}(\tau,\cdot) \right\|_{L^q}\\
    &\qquad\lesssim \int_0^{[t-1]^{+}} (1+t-\tau)^{1-\frac{n}{2(\sigma-\sigma_1)}(\frac{1}{m}-\frac{1}{q})-\frac{2k\sigma_2+j\sigma_1}{2(\sigma-\sigma_1)}} \left\||u(\tau, \cdot)|^p\right\|_{L^m \cap L^q} d\tau\\
    &\qquad\quad+ \int_{[t-1]^{+}}^t \left\||u(\tau, \cdot)|^p\right\|_{L^q \cap \dot{H}^{3\sigma_2-\sigma}_q} d\tau,
\end{align*}
where $(i,j) \in \{(0,1),(1,0), (0,0)\}$.
Like the proof steps of Theorem \ref{theorem1.0}, applying the Gagliardo-Nirenberg inequality with condition (\ref{condition1.2.2}) being satisfied, we have the conclusion
\begin{align}
    \|u(\tau,\cdot)\|_{L^{mp}}^p \lesssim& (1+\tau)^{p -\frac{n}{2m(\sigma-\sigma_1)}(p-1)} \|u\|_{X(t)}^p,\label{Main.Es1.3.1}\\
    \|u(\tau,\cdot)\|_{L^{qp}}^p \lesssim& (1+\tau)^{p-\frac{np}{2(\sigma-\sigma_1)}(\frac{1}{m}-\frac{1}{qp})} \|u\|_{X(t)}^p \label{Main.Es1.3.2}.
\end{align}
Next we estimate the norm $\||u(\tau,\cdot)|^p\|_{\dot{H}^{3\sigma_2-\sigma}_q}$ with $s > \sigma-\sigma_2$. 
\begin{itemize}[leftmargin=*]
    \item If $3\sigma_2-\sigma > \frac{n}{q}$, applying the fractional powers rule with $3\sigma_2-\sigma \in \left(\frac{n}{q}, p\right)$ and the fractional Sobolev embedding with a suitable $s^{*} < \frac{n}{q}$ we obtain
\begin{equation*}
    \||u(\tau,\cdot)|^p\|_{\dot{H}^{3\sigma_2-\sigma}_q} \lesssim \|u(\tau,\cdot)\|_{\dot{H}^{3\sigma_2-\sigma}_q}\left(\|u(\tau,\cdot)\|_{\dot{H}^{s^{*},q}}+\|u(\tau,\cdot)\|_{\dot{H}^{3\sigma_2-\sigma}_q}\right)^{p-1}.
\end{equation*}
Applying the fractional Gagliardo-Nirenberg inequality we have
\begin{align*}
\|u(\tau,\cdot)\|_{\dot{H}^{3\sigma_2-\sigma}_q} &\lesssim \|u(\tau,\cdot)\|_{L^q}^{1-\theta_1}\|u(\tau,\cdot)\|_{\dot{H}^s_q}^{\theta}\\
&\lesssim (1+\tau)^{1-\frac{n}{2(\sigma-\sigma_1)}(\frac{1}{m}-\frac{1}{q})-\frac{3\sigma_2-\sigma}{2(\sigma-\sigma_1)}}\|u\|_{X(t)},\\
    \|u(\tau,\cdot)\|_{\dot{H}^{s^{*}, q}} &\lesssim \|u(\tau,\cdot)\|_{L^q}^{1-\theta_2}\||D|^su(\tau,\cdot)\|_{L^q}^{\theta_2} \lesssim (1+\tau)^{1-\frac{n}{2(\sigma-\sigma_1)}(\frac{1}{m}-\frac{1}{q})-\frac{s^{*}}{2(\sigma-\sigma_1)}}\|u\|_{X(t)},
\end{align*}
where $\theta_2 = \frac{s^{*}}{s}$. Hence, we derive
\begin{align}
    \||u(\tau,\cdot)|^p\|_{\dot{H}^{3\sigma_2-\sigma}_q} &\lesssim (1+\tau)^{p(1-\frac{n}{2(\sigma-\sigma_1)}(\frac{1}{m}-\frac{1}{q}))-\frac{3\sigma_2-\sigma}{2(\sigma-\sigma_1)}-(p-1)\frac{s^{*}}{2(\sigma-\sigma_1)}}\|u\|_{X(t)}^p\notag\\
    &\lesssim (1+\tau)^{p-\frac{np}{2(\sigma-\sigma_1)}(\frac{1}{m}-\frac{1}{qp})-\frac{3\sigma_2-\sigma}{2(\sigma-\sigma_1)}}\|u\|_{X(t)}^p, \label{Main.Es1.3.3}
\end{align}
where we choose $s^{*}= \frac{n}{q}-\epsilon$ with a sufficiently small positive number $\epsilon$.
\item If $3\sigma_2-\sigma \leq \frac{n}{q}$, carrying out the same proof steps for $s=2\sigma_2$ as in Theorem \ref{theorem1.0}, we get estimate (\ref{Main.Es1.3.3}).
\end{itemize}
From estimates (\ref{Main.Es1.3.1})-(\ref{Main.Es1.3.3}), we get estimate (\ref{estimates1.1.6}). Now we prove the estimate (\ref{estimates3.1.2}). Using again the $(L^m-L^q)-L^q$ estimates if $\tau \in [0, [t-1]^{+}]$ and the $L^q - L^q$ estimates if $\tau \in [[t-1]^{+}, t]$ from corollary \ref{corollary2.2}, we derive for two functions $u$ and $v$ from $X(t)$ the following estimate:
\begin{align*}
    &\left\|\partial_t^j |D|^{2k\sigma_2} (Nu(\tau,\cdot)-Nv(\tau,\cdot)) \right\|_{L^q}\\
    &\qquad\lesssim \int_0^{[t-1]^{+}} (1+t-\tau)^{1-\frac{n}{2(\sigma-\sigma_1)}(\frac{1}{m}-\frac{1}{q})-\frac{2k\sigma_2+j\sigma_1}{2(\sigma-\sigma_1)}} \left\||u(\tau, \cdot)|^p-|v(\tau,\cdot)|^p\right\|_{L^m \cap L^q} d\tau\\
    &\qquad\quad+ \int_{[t-1]^{+}}^t \left\||u(\tau, \cdot)|^p-|v(\tau,\cdot)|^p\right\|_{L^q \cap \dot{H}^{3\sigma_2-\sigma}_q} d\tau,
\end{align*}
where $(j, k) \in \{(0,0), (0,1), (1,0)\}$. From the proof of Theorem \ref{theorem1.0} we obtain
\begin{align}
    \big\||u(\tau,\cdot)|^p-|v(\tau,\cdot)|^p\big\|_{L^m} \lesssim& (1+\tau)^{p-\frac{n}{2m(\sigma-\sigma_1)}(p-1)}\|u-v\|_{X(t)}\left(\|u\|_{X(t)}^{p-1}+\|v\|_{X(t)}^{p-1}\right),\label{Main.Es1.3.4}\\ 
     \big\||u(\tau,\cdot)|^p-|v(\tau,\cdot)|^p\big\|_{L^q} \lesssim& (1+\tau)^{p-\frac{np}{2(\sigma-\sigma_1)}(\frac{1}{m}-\frac{1}{qp})}\|u-v\|_{X(t)}\left(\|u\|_{X(t)}^{p-1}+\|v\|_{X(t)}^{p-1}\right).\label{Main.Es1.3.5}
     \end{align}
    Finally we need to estimate the norm $\| |u(\tau,\cdot)|^p-|v(\tau,\cdot)|^p\|_{\dot{H}^{3\sigma_2-\sigma}_q}$. Carrying out the same proof steps of theorem \ref{theorem1.0} we have the conclusion
    \begin{align}
        &\| |u(\tau,\cdot)|^p-|v(\tau,\cdot)|^p \|_{\dot{H}^{3\sigma_2-\sigma}_q} \notag\\
    &\qquad\lesssim (1+\tau)^{p-\frac{np}{2(\sigma-\sigma_1)}(\frac{1}{m}-\frac{1}{qp})-\frac{3\sigma_2-\sigma}{2(\sigma-\sigma_1)}} \|u-v\|_{X(t)}\left(\|u\|_{X(t)}^{p-1} + \|v\|_{X(t)}^{p-1}\right). \label{Main.Es1.3.6}
    \end{align}
    From estimates (\ref{Main.Es1.3.4})-(\ref{Main.Es1.3.6}) we get estimate (\ref{estimates3.1.2}). Summarizing, Theorem \ref{theorem1.2} is proved completely.
\subsection{Proof of Theorem \ref{theorem1.3}} We introduction the data space $\frak{D}^{s}_{m,q}$ and the solution space 
\begin{align*}
    X(t) := C([0,t], H^s_q) \cap C^1([0,t], H^{s-2\sigma_2}_q).
\end{align*}
 Let us prove the inequality (\ref{estimates1.1.6}). Applying Corollary \ref{corollary2.2} we have
 \begin{align*}
      &\left\|\partial_t^j |D|^{ks} u^{\rm non}(\tau,\cdot) \right\|_{L^q}\\
    &\qquad\lesssim \int_0^{[t-1]^{+}} (1+t-\tau)^{1-\frac{n}{2(\sigma-\sigma_1)}(\frac{1}{m}-\frac{1}{q})-\frac{ks+j\sigma_1}{2(\sigma-\sigma_1)}} \left\||u(\tau, \cdot)|^p\right\|_{L^m \cap L^q \cap \dot{H}^{s-2\sigma_2}_q} d\tau\\
    &\qquad\quad+ \int_{[t-1]^{+}}^t \left\||u(\tau, \cdot)|^p\right\|_{L^q \cap \dot{H}^{3\sigma_2-\sigma}_q\cap \dot{H}^{s-\sigma+\sigma_2}_q} d\tau
 \end{align*}
 and 
 \begin{align*}
     \||D|^{s-2\sigma_2} u^{\rm non}(t,\cdot)\|_{L^q} \lesssim& \int_0^{[t-1]^{+}} (1+t-\tau)^{1-\frac{n}{2(\sigma-\sigma_1)}(\frac{1}{m}-\frac{1}{q})-\frac{s}{2(\sigma-\sigma_1)}+\frac{\sigma_2-\sigma_1}{\sigma-\sigma_1}} \| |u(\tau,\cdot)|^p\|_{L^m \cap L^q \cap \dot{H}^{s-2\sigma_2}_q} d\tau\\
     &+ \int_{[t-1]^{+}}^t \| |u(\tau,\cdot)|^p\|_{L^q \cap \dot{H}^{s-\sigma+\sigma_2}_q} d\tau,
 \end{align*}
 where $(k,j) \in \{(0,0), (1,0), (0,1)\}$
 From the proof of Theorem \ref{theorem1.0} we obtain
\begin{align}
    \|u(\tau,\cdot)\|_{L^{mp}}^p
    \lesssim &(1+\tau)^{p -\frac{n}{2m(\sigma-\sigma_1)}(p-1)} \|u\|_{X(t)}^p, \label{Main.Es1.4.1}\\
    \|u(\tau,\cdot)\|_{L^{qp}}^p
    \lesssim & (1+\tau)^{p-\frac{np}{2(\sigma-\sigma_1)}(\frac{1}{m}-\frac{1}{qp})} \|u\|_{X(t)}^p, \label{Main.Es1.4.2}
\end{align}
with the following condition being satisfied:
\begin{align*}
    p \in \left[\frac{q}{m}, \infty\right) \text{ if } n = qs \text{ or } p \in \left[\frac{q}{m}, \frac{n}{n-qs}\right] \text{ if } n \in \left(qs, \frac{q^2s}{q-m}\right].
\end{align*}
Now we estimate the norms $\| |u(\tau,\cdot)|^p\|_{\dot{H}^{s-\sigma+\sigma_2}_q}$, $\| |u(\tau,\cdot)|^p\|_{\dot{H}^{s-2\sigma_2}_q}$ and  $\| |u(\tau,\cdot)|^p\|_{\dot{H}^{3\sigma_2-\sigma}_q}$. Carrying out the same proof steps as in Theorem \ref{theorem1.0} we obtain
\begin{align*}
    \||u(\tau,\cdot)|^p\|_{\dot{H}^{s-\sigma+\sigma_2}_q}
     \lesssim& (1+\tau)^{p-\frac{np}{2(\sigma-\sigma_1)}(\frac{1}{m}-\frac{1}{qp})-\frac{s-\sigma+\sigma_2}{2(\sigma-\sigma_1)}} \|u\|_{X(t)}^p.
\end{align*}
From here we have a conclusion
\begin{align}
    \max\left\{ \| |u(\tau,\cdot)|^p\|_{\dot{H}^{s-2\sigma_2}_q}, \| |u(\tau,\cdot)|^p\|_{\dot{H}^{3\sigma_2-\sigma}_q} \right\} &\lesssim \| |u(\tau,\cdot)|^p\|_{H^{s-\sigma+\sigma_2}_q}\notag\\
    &\lesssim (1+\tau)^{p-\frac{np}{2(\sigma-\sigma_1)}(\frac{1}{m}-\frac{1}{qp})} \|u\|_{X(t)}^p. \label{Main.Es1.4.3}
\end{align}
The condition (\ref{condition1.3.1}) implies
\begin{align*}
    p-\frac{n}{2m(\sigma-\sigma_1)}(p-1) &< -1,\\
    p-\frac{np}{2(\sigma-\sigma_1)}\left(\frac{1}{m}-\frac{1}{qp}\right) &< 1-\frac{n}{2(\sigma-\sigma_1)}\left(\frac{1}{m}-\frac{1}{q}\right)-\frac{s}{2(\sigma-\sigma_1)},
\end{align*}
combined with estimates (\ref{Main.Es1.4.1})-(\ref{Main.Es1.4.3}) we have estimate (\ref{estimates1.1.6}). Carrying out the same proof steps as theorem \ref{theorem1.0}, we also get estimate (\ref{estimates3.1.2}). Summarizing, Theorem \ref{theorem1.3} is proved completely.
%.................................................................................
\section{Some generalizations}\label{Sec.4}
In this section, to generalize some results for \eqref{Main.Eq.1} let us consider the following Cauchy problem for semi-linear $\sigma$-evolution equations with double damping:
\begin{equation}\label{Main.Eq.2}
    \begin{cases}
        u_{tt}+ (-\Delta)^\sigma u+ \mu_1(-\Delta)^{\sigma_1} u_t+ \mu_2(-\Delta)^{\sigma_2} u_t= ||D|^a u|^p, &\quad x\in \R^n,\, t \ge 0, \\
u(0,x)= u_0(x),\quad u_t(0,x)= u_1(x), &\quad x\in \R^n, \\
    \end{cases}
\end{equation}
where $\sigma\ge 1$, $\sigma_1$, $\sigma_2$ are any constants satisfying $0< \sigma_1< \sigma/2< \sigma_2< \sigma$. Here $a, \mu_1,\mu_2 $ are positive constants and the parameter $p>1$ stands for power exponents of the nonlinear term.
\begin{theorem}\label{theorem1.5}
    Let $0 \leq a < \sigma-\sigma_2$. Let $q \in (1, \infty)$ be a fixed constant and $m \in [1, q)$. We assume that the exponent $p$ satisfies the conditions $p > 1 + \lceil 3\sigma_2-\sigma \rceil$ and
    \begin{equation}\label{condition1.5.1}
        p > 1+ \frac{4m(\sigma-\sigma_1)}{n-2m(\sigma-\sigma_1)+ma} \text{ and } n > 2m(\sigma-\sigma_1) - ma.
    \end{equation}
    Moreover, we suppose the following conditions:
    \begin{align}
        &p \in \left[\frac{q}{m}, \infty\right)&\quad &\text{ if } n \leq q(2\sigma_2-a),& \notag\\
        &p \in \left[\frac{q}{m}, \frac{n+q(\sigma-3\sigma_2)}{n-q(2\sigma_2-a)}\right] &\quad &\text{ if } n \in \left(q(2\sigma_2-a), \frac{q^2(2\sigma_2-a)-qm(3\sigma_2-\sigma)}{q-m}\right].&\label{condition1.5.2}
    \end{align}
    Then, there extists a constant $\epsilon > 0$ such that for any small data
    \begin{equation*}
        (u_0, u_1) \in \frak{D}_{m,q}^{2\sigma_2}  \text{ satisfying the asumption } \|(u_0,u_1)\|_{\frak{D}_{m,q}^{2\sigma_2}} \leq \epsilon,
    \end{equation*}
    we have a uniquely determined global (in time) small data energy solution
    \begin{equation*}
        u \in C([0, \infty), H^{2\sigma_2}_q) \cap C^1([0, \infty), L^q)
    \end{equation*}
    to (\ref{Main.Eq.2}).The following estimates hold:
\begin{align*}
    \|u(t,\cdot)\|_{L^q} &\lesssim (1+t)^{1-\frac{n}{2(\sigma-\sigma_1)}\left(\frac{1}{m}-\frac{1}{q}\right)} \|(u_0,u_1)\|_{\frak{D}_{m,q}^{2\sigma_2}},\\
    \||D|^{2\sigma_2}  u(t,\cdot)\|_{L^q} &\lesssim (1+t)^{1-\frac{n}{2(\sigma-\sigma_1)}\left(\frac{1}{m}-\frac{1}{q}\right)-\frac{\sigma_2}{\sigma-\sigma_1}} \|(u_0,u_1)\|_{\frak{D}_{m,q}^{2\sigma_2}},\label{estimates1.5.2}\\
    \|u_t(t,\cdot)\|_{L^q} &\lesssim (1+t)^{1-\frac{n}{2(\sigma-\sigma_1)}\left(\frac{1}{m}-\frac{1}{q}\right)-\frac{\sigma_1}{\sigma-\sigma_1}} \|(u_0,u_1)\|_{\frak{D}_{m,q}^{2\sigma_2}}. 
\end{align*}

\end{theorem}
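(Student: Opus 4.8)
The plan is to transplant the entire machinery from the proof of Theorem \ref{theorem1.2}, since \eqref{Main.Eq.2} differs from \eqref{Main.Eq.1} only in that the source term $|u|^p$ is replaced by $||D|^a u|^p$. First I would invoke Duhamel's principle to represent the mild solution as
\[
u(t,x) = K_0(t,x) \ast_x u_0(x) + K_1(t,x) \ast_x u_1(x) + \int_0^t K_1(t-\tau,x) \ast_x ||D|^a u(\tau,x)|^p \, d\tau,
\]
and I would work in the same solution space $X(t) := C([0,t], H^{2\sigma_2}_q) \cap C^1([0,t], L^q)$ with the weights $f_1$, $f_{2,2\sigma_2}$, $f_3$ (and $f_{4,s}=0$) inherited from Theorem \ref{theorem1.2}. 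Defining the operator $N$ exactly as there, the whole argument again reduces to verifying the mapping estimate \eqref{estimates3.1.1} and the Lipschitz estimate \eqref{estimates3.1.2}, after which Banach's fixed point theorem delivers the unique global solution together with the stated decay rates.

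The new content lies entirely in the nonlinear estimates. Applying Corollary \ref{corollary2.2} exactly as in Theorem \ref{theorem1.2}, I would reduce matters to bounding $||D|^a u(\tau,\cdot)|^p$ in $L^m$, $L^q$ and $\dot H^{3\sigma_2-\sigma}_q$. For the first two norms I would apply the fractional Gagliardo--Nirenberg inequality to $|D|^a u$, interpolating between $L^q$ and $\dot H^{2\sigma_2}_q$, namely
\[
\||D|^a u(\tau,\cdot)\|_{L^\eta} \lesssim \|u(\tau,\cdot)\|_{L^q}^{1-\theta}\,\||D|^{2\sigma_2}u(\tau,\cdot)\|_{L^q}^{\theta}, \qquad \theta = \frac{n}{2\sigma_2}\Big(\frac{1}{q}-\frac{1}{\eta}\Big) + \frac{a}{2\sigma_2},
\]
with $\eta=mp$ and $\eta=qp$. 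The admissibility requirement $\theta\in[a/(2\sigma_2),1]$ is exactly what produces the ranges recorded in \eqref{condition1.5.2}, in which the effective regularity $2\sigma_2$ of Theorem \ref{theorem1.2} is lowered to $2\sigma_2-a$. Substituting the $X(t)$-decay of $\|u\|_{L^q}$ and $\||D|^{2\sigma_2}u\|_{L^q}$ then yields time factors carrying an additional decay of order $pa/(2(\sigma-\sigma_1))$ relative to Theorem \ref{theorem1.2}, and the first inequality in \eqref{condition1.5.1}---whose denominator is precisely that of \eqref{condition1.2.1} shifted by $+ma$---is exactly the condition that renders the resulting $\tau$-integrals convergent, as in \eqref{result1.1.1}--\eqref{result1.0.2}.

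The step I expect to be the main obstacle is the estimate of the composed nonlinearity in $\dot H^{3\sigma_2-\sigma}_q$. Writing $||D|^a u|^p = G(|D|^a u)$ with $G(v)=|v|^p$ and applying the fractional chain rule---and, for the Lipschitz bound, the fractional Leibniz rule together with the integral representation $|w_1|^p-|w_2|^p = p\int_0^1 (w_1-w_2)\,G(\omega w_1+(1-\omega)w_2)\,d\omega$ already exploited in the proof of Theorem \ref{theorem1.0}---one is led to factors of $|D|^{3\sigma_2-\sigma}|D|^a u = |D|^{3\sigma_2-\sigma+a}u$. These can be interpolated inside the regularity carried by $X(t)$ only if $3\sigma_2-\sigma+a\le 2\sigma_2$, that is, precisely when $a<\sigma-\sigma_2$; this is the structural reason for that hypothesis, mirroring the role of $s>\sigma-\sigma_2$ in Theorem \ref{theorem1.0}. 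Granting it, the same Gagliardo--Nirenberg bookkeeping as in Theorem \ref{theorem1.2} furnishes for $\||D|^a u|^p\|_{\dot H^{3\sigma_2-\sigma}_q}$ a bound of the same form as \eqref{Main.Es1.3.3} there, and combining the three nonlinear estimates with the linear decay of Corollary \ref{corollary2.2} closes both \eqref{estimates3.1.1} and \eqref{estimates3.1.2}, completing the proof.
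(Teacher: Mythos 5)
Your proposal is correct and follows essentially the same route as the paper's own proof: same solution space and weights inherited from Theorem \ref{theorem1.2}, reduction to the mapping estimate \eqref{estimates3.1.1} and Lipschitz estimate \eqref{estimates3.1.2}, fractional Gagliardo--Nirenberg interpolation of $|D|^a u$ between $L^q$ and $\dot H^{2\sigma_2}_q$ producing the shifted conditions \eqref{condition1.5.1}--\eqref{condition1.5.2}, and the fractional chain/Leibniz rules with the integral representation (with $G(u)=u|u|^{p-2}$, not $|u|^p$, a harmless notational slip on your part) for the $\dot H^{3\sigma_2-\sigma}_q$ bound. You also correctly identified the structural role of $a<\sigma-\sigma_2$, namely that $a+3\sigma_2-\sigma\le 2\sigma_2$ keeps the interpolation inside the regularity carried by $X(t)$, exactly as in the paper.
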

\begin{proof}
    We introduce the data space $\frak{D}_{m,q}^{2\sigma_2}$ and the solution space 
\begin{equation*}
    X(t) := C([0,t], H^{2\sigma_2}_q) \cap C^1([0,t], L^q),
\end{equation*}
where the weight $f_{4,s} = 0$. Let us prove the inequality (\ref{estimates1.1.6}). From Corollary \ref{corollary2.2} we have
\begin{align*}
    \|u^{\rm non}(t,\cdot)\|_{L^q} \lesssim& \int_0^{[t-1]^{+}} (1+t-\tau)^{1-\frac{n}{2(\sigma-\sigma_1)}(\frac{1}{m}-\frac{1}{q})}\left\|||D|^au(\tau,\cdot)|^p\right\|_{L^m \cap L^q} d\tau \\
    &+ \int_{[t-1]^{+}}^t (t-\tau) \left\|||D|^a u(\tau,\cdot)|^p\right\|_{L^q} d\tau.
\end{align*}
Hence, it is necessary to require the estimates for $||D|^au(\tau,\cdot)|^p$ in $L^m \cap L^q$ and $L^q$ as follows:
\begin{align*}
    \|||D|^au(\tau,\cdot)|^p\|_{L^m \cap L^q} &\lesssim \| |D|^a u(\tau,\cdot)\|_{L^{mp}}^p +\||D|^a u(\tau,\cdot)\|_{L^{qp}}^p,\\
\|||D|^au(\tau,\cdot)|^p\|_{L^q} &\lesssim \| |D|^a u(\tau,\cdot)\|_{L^{qp}}^p.
\end{align*}
Applying the fractional Gagliardo- Nirenberg inequality we can conclude 
\begin{align*}
    \| |D|^a u(\tau,\cdot)\|_{L^{mp}}^p \lesssim& \|u\|_{X(t)}^p (1+\tau)^{p-(p-1)\frac{n}{2m(\sigma-\sigma_1)}-\frac{pa}{2(\sigma-\sigma_1)}},\\
    \| |D|^a u(\tau,\cdot)\|_{L^{qp}}^p \lesssim& \|u\|_{X(t)}^p (1+\tau)^{p-\frac{np}{2(\sigma-\sigma_1)}(\frac{1}{m}-\frac{1}{qp})-\frac{pa}{2(\sigma-\sigma_1)}},
\end{align*}
provided that (\ref{condition1.5.2}) is satisfied. From both estimates we may conclude
\begin{align}
    \| u^{\rm non}(t,\cdot)\|_{L^q} \lesssim& \|u\|_{X(t)}^p\int_0^{[t-1]^{+}} (1+t-\tau)^{1-\frac{n}{2(\sigma-\sigma_1)}(\frac{1}{m}-\frac{1}{q})}(1+\tau)^{p-(p-1)\frac{n}{2m(\sigma-\sigma_1)}-\frac{pa}{2(\sigma-\sigma_1)}}d\tau \notag\\
    &+ \|u\|_{X(t)}^p\int_{[t-1]^{+}}^t (t-\tau) (1+\tau)^{p-\frac{np}{2(\sigma-\sigma_1)}(\frac{1}{m}-\frac{1}{qp})-\frac{pa}{2(\sigma-\sigma_1)}}d\tau \notag\\
    \lesssim& \|u\|_{X(t)}^p (1+t)^{1-\frac{n}{2(\sigma-\sigma_1)}(\frac{1}{m}-\frac{1}{q})}.\label{Main.estimate1.5.1}
\end{align}
The second inequality arises as implied by condition (\ref{condition1.5.1})
\begin{align*}
    p-(p-1)\frac{n}{2m(\sigma-\sigma_1)}-\frac{pa}{2(\sigma-\sigma_1)} &< -1,\\
    p-\frac{np}{2(\sigma-\sigma_1)}\left(\frac{1}{m}-\frac{1}{qp}\right)-\frac{pa}{2(\sigma-\sigma_1)} &< -1-\frac{n}{2(\sigma-\sigma_1)}\left(\frac{1}{m}-\frac{1}{q}\right).
\end{align*}
Now, let us turn to estimate the norm $\|u_t^{\rm non}(t,\cdot)\|_{L^q}$. We use the $(L^m \cap L^q)-L^q$ estimate if $\tau \in [0, [t-1]^{+}]$ and the $L^q-L^q$ estimates if $\tau \in \left[[t-1]^{+}, t\right]$ from Corollary \ref{corollary2.2} to derive
\begin{align*}
    \|u_t^{\rm non}(t,\cdot)\|_{L^q} \lesssim& \int_0^{[t-1]^{+}} (1+t-\tau)^{-\frac{n}{2(\sigma-\sigma_1)}(\frac{1}{m}-\frac{1}{q})+1-\frac{\sigma_1}{\sigma-\sigma_1}}\| ||D|^au(\tau,\cdot)|^p\|_{L^m \cap L^q} d\tau\\
    &+ \int_{[t-1]^{+}}^t \|| |D|^a u(\tau,\cdot)|^p\|_{L^q \cap \dot{H}^{3\sigma_2-\sigma}_q} d\tau.
\end{align*}
The first integral is evaluated as follows:
\begin{align*}
    &\int_0^{[t-1]^{+}} (1+t-\tau)^{-\frac{n}{2(\sigma-\sigma_1)}(\frac{1}{m}-\frac{1}{q})+1-\frac{\sigma_1}{\sigma-\sigma_1}}\| ||D|^au(\tau,\cdot)|^p\|_{L^m \cap L^q} d\tau \\
    &\qquad\quad \lesssim \|u\|_{X(t)}^p (1+t)^{-\frac{n}{2(\sigma-\sigma_1)}(\frac{1}{m}-\frac{1}{q})+1-\frac{\sigma_1}{\sigma-\sigma_1}}.
\end{align*}
In the second integral we need to estimate the norm $\| ||D|^a u(\tau,\cdot)|^p\|_{\dot{H}^{3\sigma_2-\sigma}_q}$. Applying the fractional chain rule with $p > \lceil 3\sigma_2-\sigma \rceil$ and the fractional Gagliardo-Nirenberg inequality with $ 0 \leq a < \sigma-\sigma_2 $ we obtain
\begin{align*}
    \left\| ||D|^au(\tau,\cdot)|^p\right\|_{\dot{H}^{3\sigma_2-\sigma}_q} \lesssim& \| |D|^a u(\tau,\cdot)\|_{L^{r_1}}^{p-1} \| |D|^{a+3\sigma_2-\sigma} u(\tau,\cdot)\|_{L^{r_2}} \\
    \lesssim& \|u(\tau,\cdot)\|_{L^q}^{(p-1)(1-\theta_1)} \| |D|^{2\sigma_2} u(\tau,\cdot)\|_{L^q}^{(p-1)\theta_1} \|u(\tau,\cdot)\|_{L^q}^{1-\theta_2}\||D|^{2\sigma_2}u(\tau,\cdot)\|_{L^q}^{\theta_2}\\
    \lesssim& \|u\|_{X(t)}^p (1+\tau)^{p(1-\frac{n}{2(
    \sigma-\sigma_1)}(\frac{1}{m}-\frac{1}{q}))-\sigma_2\frac{(p-1)\theta_1+\theta_2}{\sigma-\sigma_1}}\\
    \lesssim& \|u\|_{X(t)}^p (1+\tau)^{p-\frac{np}{2(\sigma-\sigma_1)}(\frac{1}{m}-\frac{1}{qp})-\frac{pa}{2(\sigma-\sigma_1)}-\frac{3\sigma_2-\sigma}{2(\sigma-\sigma_1)}},
\end{align*}
where 
\begin{equation*}
    \theta_1 = \frac{n}{2\sigma_2}\left(\frac{1}{q}-\frac{1}{r_1}+\frac{a}{n}\right) \in \left[\frac{a}{2\sigma_2}, 1\right] 
    \text{ , } \theta_2 = \frac{n}{2\sigma_2}\left(\frac{1}{q}-\frac{1}{r_2}+\frac{a+3\sigma_2-\sigma}{n}\right) \in \left[\frac{a+3\sigma_2-\sigma}{2\sigma_2}, 1\right]
\end{equation*}
and 
\begin{equation*}
    \frac{p-1}{r_1}+\frac{1}{r_2} =\frac{1}{q}.
\end{equation*}
These conditions imply condition (\ref{condition1.5.2}). As a result, we can conclude 
\begin{equation}\label{Main.estimate1.5.2}
    \|u_t^{\rm non}(t,\cdot)\|_{L^q} \lesssim \|u\|_{X(t)}^p (1+t)^{-\frac{n}{2(\sigma-\sigma_1)}(\frac{1}{m}-\frac{1}{q})+1-\frac{\sigma_1}{\sigma-\sigma_1}}.
\end{equation}
Finally, we need to estimate the norm $\| |D|^{2\sigma_2}u^{\rm non} (t,\cdot)\|_{L^q}$. Carrying out the same calculation steps as above, we have a conclusion
\begin{align}
    \| |D|^{2\sigma_2} u^{\rm non}(t,\cdot)\|_{L^q} \lesssim& \int_0^{[t-1]^{+}} (1+t-\tau)^{-\frac{n}{2(\sigma-\sigma_1)}(\frac{1}{m}-\frac{1}{q})+1-\frac{\sigma_2}{\sigma-\sigma_1}} \left\| | |D|^a u(\tau,\cdot)|^p\right\|_{L^m \cap L^q}
    \notag\\
    &+ \int_{[t-1]^{+}}^t \left\| | |D|^a u(\tau,\cdot)|^p\right\|_{L^q \cap \dot{H}^{3\sigma_2-\sigma}_q} d\tau \notag\\
    \lesssim& \|u\|_{X(t)}^p (1+t)^{-\frac{n}{2(\sigma-\sigma_1)}(\frac{1}{m}-\frac{1}{q})+1-\frac{\sigma_2}{\sigma-\sigma_1}}.\label{Main.estimate1.5.3}
\end{align}
From estimates (\ref{Main.estimate1.5.1})-(\ref{Main.estimate1.5.3}), we have the conclusion of estimate (\ref{estimates1.1.6}).\medskip

Next we prove the estimate (\ref{estimates3.1.2}). Using again the $(L^m-L^q)-L^q$ estimates if $\tau \in [0, [t-1]^{+}]$ and the $L^q-L^q$ estimates if $\tau \in [[t-1]^{+}, t]$ from corollary \ref{corollary2.2}, we derive for two functions $u$ and $v$ from $X(t)$ the following estimate:
\begin{align*}
    &\left\|Nu(t,\cdot)-Nv(t,\cdot)\right\|_{L^q} \\
    &\qquad\quad\lesssim \int_0^{[t-1]^{+}} (1+t-\tau)^{-\frac{n}{2(\sigma-\sigma_1)}(\frac{1}{m}-\frac{1}{q})+1} \left\| | |D|^a u(\tau,\cdot)|^p - | |D|^a v(\tau,\cdot)|^p\right\|_{L^m \cap L^q} d\tau\\
    &\qquad\quad\quad +\int_{[t-1]^{+}}^t \left\| | |D|^a u(\tau,\cdot)|^p - | |D|^a v(\tau,\cdot)|^p\right\|_{L^q} d\tau, \\
    &\left\|\partial_t^j |D|^{2k\sigma_2}\left(Nu(t,\cdot)-Nv(t,\cdot)\right)\right\|_{L^q} \\
    &\qquad\quad\lesssim\int_0^{[t-1]^{+}} (1+t-\tau)^{-\frac{n}{2(\sigma-\sigma_1)}(\frac{1}{m}-\frac{1}{q})+1-\frac{k\sigma_2+j\sigma_1}{\sigma-\sigma_1}} \left\| | |D|^a u(\tau,\cdot)|^p - | |D|^a v(\tau,\cdot)|^p\right\|_{L^m \cap L^q} d\tau\\
    &\qquad\quad\quad + \int_{[t-1]^{+}}^t \left\| | |D|^a u(\tau,\cdot)|^p - | |D|^a v(\tau,\cdot)|^p\right\|_{ L^q \cap \dot{H}^{3\sigma_2-\sigma}_q} d\tau.
\end{align*}
where $(j,k) = (0,1)$ or $(j,k)=(1,0)$. By using Holder's inequality, we get
\begin{align*}
    &\left\| ||D|^au(\tau,\cdot)|^p - ||D|^av(\tau,\cdot)|^p\right\|_{L^q} \\
    &\qquad \lesssim \||D|^a u(\tau,\cdot)-|D|^av(\tau,\cdot)\|_{L^{qp}} \left(\||D|^au(\tau,\cdot)\|_{L^{qp}}^{p-1} +\||D|^av(\tau,\cdot)\|_{L^{qp}}^{p-1}\right),\\
    &\left\| ||D|^au(\tau,\cdot)|^p - ||D|^av(\tau,\cdot)|^p\right\|_{L^m} \\
    &\qquad \lesssim \||D|^a u(\tau,\cdot)-|D|^av(\tau,\cdot)\|_{L^{mp}} \left(\||D|^au(\tau,\cdot)\|_{L^{mp}}^{p-1} + \||D|^av(\tau,\cdot)\|_{L^{mp}}^{p-1}\right).
\end{align*}
Applying the fractional Gagliardo-Nirenberg inequality to the terms
\begin{equation*}
    \| |D|^a u(\tau,\cdot)-|D|^av(\tau,\cdot)\|_{L^h} \text{ , } \| |D|^au(\tau,\cdot)\|_{L^h} \text{ and } \| |D|^a v(\tau,\cdot)\|_{L^h}
\end{equation*}
with $h= qp$ and $h = mp$ we may conclude
\begin{align}
    &\left\| ||D|^au(\tau,\cdot)|^p - ||D|^av(\tau,\cdot)|^p\right\|_{L^q}  \notag\\
    &\qquad\quad\lesssim (1+\tau)^{p-\frac{np}{2(\sigma-\sigma_1)}(\frac{1}{m}-\frac{1}{qp})-\frac{pa}{2(\sigma-\sigma_1)}} \|u(t,\cdot)-v(t,\cdot)\|_{X(t)}\left(\|u(t,\cdot)\|_{X(t)}^{p-1}+\|v(t,\cdot)\|_{X(t)}^{p-1}\right), \label{Main.estiamte1.5.4}\\
&\left\|||D|^au(\tau,\cdot)|^p - ||D|^av(\tau,\cdot)|^p\right\|_{L^m} \notag\\
&\qquad\quad\lesssim (1+\tau)^{p-(p-1)\frac{n}{2m(\sigma-\sigma_1)}-\frac{pa}{2(\sigma-\sigma_1)}} \|u(t,\cdot)-v(t,\cdot)\|_{X(t)}\left(\|u(t,\cdot)\|_{X(t)}^{p-1}+\|v(t,\cdot)\|_{X(t)}^{p-1}\right).\label{Main.estimate1.5.5}
\end{align}
We now focus on our attention to estimate $\| ||D|^au(\tau,\cdot)|^p-||D|^av(\tau,\cdot)|^p \|_{\dot{H}^{3\sigma_2-\sigma}_q}$. By using the integral representation
\begin{align*}
    &||D|^au(\tau, x)|^p-||D|^av(\tau, x)|^p \\
    &\qquad= p \int_0^1 \left(|D|^a u(\tau,x)-|D|^a v(\tau,x)\right) G\left(\omega |D|^a u(\tau,x)+(1-\omega) |D|^a v(\tau,x)\right) d\omega,
\end{align*}
where $G(u)= u|u|^{p-2}$, we obtain
\begin{align*}
    &\left\| ||D|^a u(\tau,\cdot)|^p - ||D|^av(\tau,\cdot)|^p\right\|_{\dot{H}^{3\sigma_2-\sigma}_q} \\
    &\qquad\quad\lesssim \int_0^1 \left\||D|^{3\sigma_2-\sigma}\left(\left(|D|^a u(\tau,x)-|D|^a v(\tau,x)\right) G\left(\omega |D|^a u(\tau,x)+(1-\omega) |D|^a v(\tau,x)\right) \right)\right\|_{L^q}d\omega.
\end{align*}
Thanks to the fractional Leibniz formula we can proceed as follows:
\begin{align*}
    &\left\| ||D|^a u(\tau,\cdot)|^p - ||D|^av(\tau,\cdot)|^p\right\|_{\dot{H}^{3\sigma_2-\sigma}_q}\\
    &\qquad\lesssim \| |D|^{a+3\sigma_2-\sigma}\left(u(\tau,\cdot)-v(\tau,\cdot)\right)\|_{L^{q_1}} \int_0^1 \left\|G\left(\omega |D|^a u(\tau,\cdot)+(1-\omega)|D|^a
v(\tau,\cdot)\right)\right\|_{L^{q_2}} d\omega\\
&\qquad\qquad+ \| |D|^au(\tau,\cdot)-|D|^a v(\tau,\cdot)\|_{L^{q_3}} \int_0^1 \left\| |D|^{3\sigma_2-\sigma}G\left(\omega |D|^au(\tau,\cdot)+(1-\omega)|D|^a v(\tau,\cdot)\right)\right\|_{L^{q_4}} d\omega\\
&\qquad\lesssim \| |D|^{a+3\sigma_2-\sigma}\left(u(\tau,\cdot)-v(\tau,\cdot)\right)\|_{L^{q_1}}\left(\||D|^au(\tau,\cdot)\|_{L^{q_2(p-1)}}^{p-1}+ \||D|^av(\tau,\cdot)\|_{L^{q_2(p-1)}}^{p-1}\right)\\
&\qquad\quad + \| |D|^au(\tau,\cdot)-|D|^a v(\tau,\cdot)\|_{L^{q_3}} \int_0^1 \left\| |D|^{3\sigma_2-\sigma}G\left(\omega |D|^au(\tau,\cdot)+(1-\omega)|D|^a v(\tau,\cdot)\right)\right\|_{L^{q_4}} d\omega,
\end{align*}
where 
\begin{equation*}
    \frac{1}{q_1}+\frac{1}{q_2}= \frac{1}{q_3} +\frac{1}{q_4} = \frac{1}{q}.
\end{equation*}
Employing the fractional Garliardo-Nirenberg inequality implies
\begin{align*}
   \| |D|^{a+3\sigma_2-\sigma}\left(u(\tau,\cdot)-v(\tau,\cdot)\right)\|_{L^{q_1}} &\lesssim \|u(\tau,\cdot)-v(\tau,\cdot)\|_{\dot{H}^{2\sigma_2}_q}^{a_1} \|u(\tau,\cdot)-v(\tau,\cdot)\|_{L^q}^{1-a_1},\\
   \||D|^au(\tau,\cdot)\|_{L^{q_2(p-1)}} &\lesssim \|u(\tau,\cdot)\|_{\dot{H}^{2\sigma_2}_q}^{a_2} \|u(\tau,\cdot)\|_{L^q}^{1-a_2},\\
   \left\| |D|^a\left(u(\tau,\cdot)- v(\tau,\cdot)\right)\right\|_{L^{q_3}} &\lesssim \|u(\tau,\cdot)-v(\tau,\cdot)\|_{\dot{H}^{2\sigma_2}_q}^{a_3} \|u(\tau,\cdot)-v(\tau,\cdot)\|_{L^q}^{a_3},
\end{align*}
where 
\begin{align*}
    a_1 &= \frac{n}{2\sigma_2}\left(\frac{1}{q}-\frac{1}{q_1}+\frac{a+3\sigma_2-\sigma}{n}\right) \in \left[\frac{a+3\sigma_2-\sigma}{2\sigma_2}, 1\right], \\
    a_2 &= \frac{n}{2\sigma_2}\left(\frac{1}{q}-\frac{1}{q_2(p-1)}+\frac{a}{2\sigma_2}\right) \in \left[\frac{a}{2\sigma_2}, 1\right],\\
    a_3 &= \frac{n}{2\sigma_2}\left(\frac{1}{q}-\frac{1}{q_3}+\frac{a}{2\sigma_2}\right) \in \left[\frac{a}{2\sigma_2}, 1\right].
\end{align*}
Moreover, since $\omega \in [0,1]$ is a parameter, we may apply again the fractional chain rule with $p > 1+\lceil 3\sigma_2-\sigma \rceil$ and the fractional Gagliardo-Nirenberg inequality to conclude
\begin{align*}
    &\left\| |D|^{3\sigma_2-\sigma}G\left(\omega |D|^au(\tau,\cdot)+(1-\omega)|D|^a v(\tau,\cdot)\right)\right\|_{L^{q_4}} \\
    &\qquad\lesssim \left\| \omega |D|^a u(\tau,\cdot)+(1-\omega)|D|^a v(\tau,\cdot)\right\|_{L^{q_5}}^{p-2} \left\||D|^{a+3\sigma_2-\sigma}\left(\omega u(\tau,\cdot)+(1-\omega) v(\tau,\cdot)\right) \right\|_{L^{q_6}}\\
    &\qquad\lesssim \| \omega u(\tau,\cdot)+(1-\omega) v(\tau,\cdot)\|_{\dot{H}^{2\sigma_2}_q}^{(p-2)a_5+a_6} \|\omega u(\tau,\cdot) +(1-\omega)v(\tau,\cdot)\|_{L^q}^{(p-2)(1-a_5)+1-a_6},
\end{align*}
where 
\begin{align*}
    a_5 &= \frac{n}{2\sigma_2}\left(\frac{1}{q}-\frac{1}{q_5}+\frac{a}{2\sigma_2}\right) \in \left[\frac{a}{2\sigma_2}, 1\right],\\
    a_6 &= \frac{n}{2\sigma_2}\left(\frac{1}{q}-\frac{1}{q_6}+\frac{a+3\sigma_2-\sigma}{2\sigma_2}\right) \in \left[\frac{a+3\sigma_2-\sigma}{2\sigma_2}, 1\right]
\end{align*}
and 
\begin{equation*}
      \frac{p-2}{q_5}+\frac{1}{q_6} = \frac{1}{q_4}.
\end{equation*}
Hence, we derive 
\begin{align*}
    &\int_0^1 \left\| |D|^{3\sigma_2-\sigma}G\left(\omega |D|^au(\tau,\cdot)+(1-\omega)|D|^a v(\tau,\cdot)\right)\right\|_{L^{q_4}} d\omega\\
    &\qquad\lesssim \left(\|u(\tau,\cdot)\|_{\dot{H}^{2\sigma_2}_q}+\|v(\tau,\cdot)\|_{\dot{H}^{2\sigma_2}_q}\right)^{(p-2)a_5+a_6} \left(\|u(\tau,\cdot)\|_{L^q}+\|v(\tau,\cdot)\|_{L^q}\right)^{(p-2)a_5+1-a_6}.
\end{align*}
Therefore, we conclude
\begin{align}
    &\| ||D|^au(\tau,\cdot)|^p-||D|^av(\tau,\cdot)|^p \|_{\dot{H}^{3\sigma_2-\sigma}_q} \notag\\
    &\qquad\lesssim (1+\tau)^{p-\frac{np}{2(\sigma-\sigma_1)}(\frac{1}{m}-\frac{1}{qp})-\frac{pa}{2(\sigma-\sigma_1)}-\frac{3\sigma_2-\sigma}{2(\sigma-\sigma_1)}} \|u-v\|_{X(t)}\left(\|u\|_{X(t)}^{p-1} + \|v\|_{X(t)}^{p-1}\right). \label{Main.estimate1.5.6}
\end{align}
From estimates (\ref{Main.estiamte1.5.4})-(\ref{Main.estimate1.5.6}), performing the same proof steps as estimate (\ref{estimates1.1.6}), we have the conclusion of estimate (\ref{estimates3.1.2}). Theorem \ref{theorem1.5} is proved completely.
\end{proof}
%.................................................................
%\section{Concluding remarks}
%.................................................................
\appendix
\section{The results are known}
\begin{lemma}[see \cite{IkehataTakeda2019}] \label{L^1.Lemma}
Let $a\ge 0$. Let us assume $h= h(x) \in L^1$ and $\phi=\phi(t,x)$ be a smooth function satisfying
$$ \big\||D|^a \phi(t,\cdot)\big\|_{L^2} \lesssim t^{-\alpha} \quad \text{ and }\quad  \big\||D|^{a+1} \phi(t,\cdot)\big\|_{L^2} \lesssim t^{-\alpha-\beta}, $$
for some positive constants $\alpha,\,\beta>0$. Then, the following estimate holds:
$$ \left\||D|^a \left(\phi(t,x) \ast h(x)- \left(\int_{\R^n}h(y)\,dy\right)\phi(t,x)\right)(t,\cdot) \right\|_{L^2}= o\big(t^{-\alpha}\big) \quad \text{ as }t \to \ity, $$
for all space dimensions $n\ge 1$.
\end{lemma}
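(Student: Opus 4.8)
The plan is to pass to the Fourier side and exploit Plancherel's theorem, reducing the whole statement to a single weighted $L^2$ bound for $\widehat{\phi}(t,\xi)$ against the multiplier $\widehat{h}(\xi)-\widehat{h}(0)$. Indeed, since $\int_{\R^n} h(y)\,\rmd y = \widehat{h}(0)$ and $\mathfrak{F}(\phi \ast h) = \widehat{\phi}\,\widehat{h}$, the Fourier transform of the expression inside the norm is exactly $\widehat{\phi}(t,\xi)\big(\widehat{h}(\xi)-\widehat{h}(0)\big)$. Hence, by Plancherel, it suffices to show that
\begin{equation*}
\int_{\R^n} |\xi|^{2a}\,\big|\widehat{\phi}(t,\xi)\big|^2\,\big|\widehat{h}(\xi)-\widehat{h}(0)\big|^2\,\rmd\xi = o\big(t^{-2\alpha}\big) \quad \text{ as } t \to \ity,
\end{equation*}
where the two hypotheses now read $\int_{\R^n}|\xi|^{2a}|\widehat{\phi}|^2\,\rmd\xi \lesssim t^{-2\alpha}$ and $\int_{\R^n}|\xi|^{2(a+1)}|\widehat{\phi}|^2\,\rmd\xi \lesssim t^{-2(\alpha+\beta)}$.

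First I would split the frequency domain at a threshold $\e(t)\to 0^{+}$ to be fixed later, writing the integral as $I_{\rm low}+I_{\rm high}$ over $\{|\xi|\le \e(t)\}$ and $\{|\xi|>\e(t)\}$. For $I_{\rm low}$ I would use the continuity of $\widehat{h}$ at the origin: since $h\in L^1$, the function $\widehat{h}$ is uniformly continuous, so $\omega(\e):=\sup_{|\xi|\le\e}\big|\widehat{h}(\xi)-\widehat{h}(0)\big|\to 0$ as $\e\to 0$; pulling this factor out and invoking the first hypothesis gives $I_{\rm low}\lesssim \omega(\e(t))^2\,t^{-2\alpha}$. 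For $I_{\rm high}$ I would instead bound $\big|\widehat{h}(\xi)-\widehat{h}(0)\big|\le 2\|h\|_{L^1}$ and recover the missing power of $|\xi|$ through $|\xi|^{2a}\le \e(t)^{-2}|\xi|^{2(a+1)}$ on $\{|\xi|>\e(t)\}$, so that the second hypothesis yields $I_{\rm high}\lesssim \e(t)^{-2}\,t^{-2(\alpha+\beta)}$.

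It then remains to calibrate $\e(t)$ so that both pieces are $o(t^{-2\alpha})$. The choice $\e(t)=t^{-\beta/2}$ works: it forces $\e(t)\to 0$, whence $I_{\rm low}=\omega(t^{-\beta/2})^2\,t^{-2\alpha}=o(t^{-2\alpha})$, while $I_{\rm high}\lesssim t^{\beta}\,t^{-2(\alpha+\beta)}=t^{-2\alpha-\beta}=o(t^{-2\alpha})$. Summing the two contributions and taking square roots delivers the asserted $o(t^{-\alpha})$ decay in $L^2$, uniformly in $n\ge 1$. The only genuinely delicate point---and thus the main obstacle---is the low-frequency zone, where one must know that $\widehat{h}$ is continuous at the origin with a modulus $\omega(\e)$ vanishing as $\e\to 0$; this is precisely where the hypothesis $h\in L^1$ becomes indispensable (via the Riemann--Lebesgue lemma), and it is exactly what upgrades the bound from $O(t^{-\alpha})$ to the strict little-$o$ statement. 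Everything else reduces to the routine two-zone frequency splitting described above.
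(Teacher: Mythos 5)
Your argument is correct. Note, however, that the paper does not prove this lemma at all: it is stated in Appendix A as a known result and attributed to \cite{IkehataTakeda2019}, so there is no in-paper proof to compare with. Your route --- Plancherel, then a two-zone frequency splitting at $|\xi| = t^{-\beta/2}$, with the modulus of continuity of $\widehat{h}$ at the origin controlling the low-frequency zone, and the crude bound $|\widehat{h}(\xi)-\widehat{h}(0)|\le 2\|h\|_{L^1}$ combined with $|\xi|^{2a}\le t^{\beta}|\xi|^{2(a+1)}$ controlling the high-frequency zone --- is essentially the standard argument behind the cited result, and all the steps (the pull-out of $\omega(\e(t))$, the calibration $\e(t)=t^{-\beta/2}$, the final square root) check out. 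One attribution slip worth fixing: the continuity (indeed uniform continuity) of $\widehat{h}$ for $h\in L^1$, which is exactly what the low-frequency zone requires, follows from dominated convergence applied to $|\widehat{h}(\xi)-\widehat{h}(0)|\le \int_{\R^n}|h(y)|\,|e^{-iy\cdot\xi}-1|\,\rmd y$, not from the Riemann--Lebesgue lemma; the latter concerns the decay of $\widehat{h}$ at infinity and is not used anywhere in your proof. This slip does not affect correctness.
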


\begin{proposition}[Fractional Gagliardo-Nirenberg inequality] \label{FractionalG-N}
Let $1<p,\, p_0,\, p_1<\infty$, $\sigma >0$ and $s\in [0,\sigma)$. Then, it holds for all $u\in L^{p_0} \cap \dot{H}^\sigma_{p_1}$
$$ \|u\|_{\dot{H}^{s}_p}\lesssim \|u\|_{L^{p_0}}^{1-\theta}\,\, \|u\|_{\dot{H}^{\sigma}_{p_1}}^\theta, $$
where $\theta=\theta_{s,\sigma}(p,p_0,p_1)=\frac{\frac{1}{p_0}-\frac{1}{p}+\frac{s}{n}}{\frac{1}{p_0}-\frac{1}{p_1}+\frac{\sigma}{n}}$ and $\frac{s}{\sigma}\leq \theta\leq 1$.
\end{proposition}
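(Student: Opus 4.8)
The plan is to prove this interpolation inequality by a dyadic frequency decomposition together with a threshold optimization, the scaling of the two norms on the right-hand side dictating the value of $\theta$. First I would record the scaling heuristic: writing $u_\lambda(x)=u(\lambda x)$ one has $\||D|^s u_\lambda\|_{L^p}=\lambda^{s-n/p}\||D|^s u\|_{L^p}$, $\|u_\lambda\|_{L^{p_0}}=\lambda^{-n/p_0}\|u\|_{L^{p_0}}$ and $\||D|^\sigma u_\lambda\|_{L^{p_1}}=\lambda^{\sigma-n/p_1}\||D|^\sigma u\|_{L^{p_1}}$; demanding that the claimed inequality be invariant under $u\mapsto u_\lambda$ forces precisely $\theta=\big(\tfrac1{p_0}-\tfrac1p+\tfrac sn\big)/\big(\tfrac1{p_0}-\tfrac1{p_1}+\tfrac\sigma n\big)$, which is the stated exponent. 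This both explains the formula and shows that no single inequality can hold without the frequency balancing carried out below.

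Next I would pass to Littlewood--Paley blocks $\Delta_j u$ (localized at frequency $|\xi|\sim 2^j$) and use that, for $1<p<\infty$, the homogeneous Riesz potential norm is equivalent to a square-function norm, $\||D|^s u\|_{L^p}\sim \big\|(\sum_j 4^{js}|\Delta_j u|^2)^{1/2}\big\|_{L^p}$, which follows from the Mikhlin--H\"ormander multiplier theorem and the Littlewood--Paley theorem available in this range. The core estimate is then a Bernstein-type bound on each block, controlling low-frequency blocks by $\|u\|_{L^{p_0}}$ and high-frequency blocks by $\||D|^\sigma u\|_{L^{p_1}}$, namely $\|\Delta_j u\|_{L^p}\lesssim 2^{jn(1/p_0-1/p)}\|u\|_{L^{p_0}}$ and $2^{js}\|\Delta_j u\|_{L^p}\lesssim 2^{j(s-\sigma)+jn(1/p_1-1/p)}\||D|^\sigma u\|_{L^{p_1}}$. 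Splitting the sum over $j$ at a dyadic threshold $2^k$, summing the two resulting geometric series (whose ratios have the correct signs exactly because $s/\sigma\le\theta\le1$), and optimizing the free parameter $k$ so that the two contributions balance, reproduces the product $\|u\|_{L^{p_0}}^{1-\theta}\||D|^\sigma u\|_{L^{p_1}}^\theta$.

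The main obstacle is that the three exponents $p,p_0,p_1$ are completely unordered, so the naive Bernstein inequalities above (which require $p_0\le p$ and $p_1\le p$) are not directly licensed: a low-frequency projection of a generic $L^{p_0}$ function with $p_0>p$ need not even lie in $L^p$. I would circumvent this exactly as in the Besov-space proofs of the fractional Gagliardo--Nirenberg inequality: work with the genuine square function rather than with individual blocks, using the embeddings $\dot B^s_{p,1}\hookrightarrow \dot F^s_{p,2}\hookrightarrow \dot B^s_{p,\infty}$ to trade the $\ell^2$ summation for $\ell^1$/$\ell^\infty$ summations, and replace the crude block bounds by H\"older in the physical variable together with the boundedness of the Hardy--Littlewood maximal operator (a Hedberg-type pointwise bound for the Riesz potential $|D|^{s-\sigma}$). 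The remaining care lies at the two endpoints $\theta=1$, where the statement reduces to the pure homogeneous Sobolev embedding $\dot H^\sigma_{p_1}\hookrightarrow \dot H^s_p$, and $\theta=s/\sigma$, where one of the geometric series degenerates to a borderline sum and must be absorbed through the $\dot B^{\bullet}_{\bullet,\infty}$ endpoint rather than by absolute summation; verifying that the constraint $s/\sigma\le\theta\le1$ is precisely what keeps both series summable is the technical heart of the argument.
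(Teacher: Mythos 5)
The paper never proves this proposition: it is quoted in Appendix A (``The results are known'') as a black-box tool, with no argument and no specific citation (the canonical reference is Hajaiej--Molinet--Ozawa--Wang), and in the body it is only ever invoked with $p_0=p_1=q$ and target exponent at least $q$. So there is no in-paper proof to compare against; your sketch supplies an actual proof, and it is the standard one: the square-function characterization of $\dot{H}^s_p$ for $1<p<\infty$, Bernstein bounds on Littlewood--Paley blocks, a split of the dyadic sum at a threshold $2^k$, and optimization in $k$. The scaling computation correctly pins down $\theta$ but, as you say, proves nothing by itself. Two refinements would tighten your outline into a complete argument. First, your diagnosis of the borderline cases is slightly off: writing $D=\tfrac{1}{p_0}-\tfrac{1}{p_1}+\tfrac{\sigma}{n}>0$ for the denominator defining $\theta$, the low- and high-frequency geometric series in your splitting have exponents $n\theta D$ and $n(\theta-1)D$ respectively, so for \emph{every} $\theta\in(0,1)$ --- including $\theta=s/\sigma$ when $s>0$, where the exponents are $\tfrac{s}{\sigma}nD$ and $-(1-\tfrac{s}{\sigma})nD$, both nonzero --- the two series converge geometrically and no $\dot{B}^{\bullet}_{\bullet,\infty}$ endpoint is needed; the only genuine borderlines are $\theta=1$, which you correctly reduce to the homogeneous Sobolev embedding $\dot{H}^\sigma_{p_1}\hookrightarrow \dot{H}^s_p$ (legitimate here, since $\theta=1$ forces $p\geq p_1$ via the scaling relation), and the trivial case $\theta=0$, where $s=0$ and $p=p_0$. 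Second, the unordered-exponents obstacle you flag is real, but it is resolved more cheaply than by Hedberg-type maximal-function machinery: the scaling relation $\tfrac{1}{p}=\tfrac{1-\theta}{p_0}+\tfrac{\theta}{p_1}-\tfrac{\theta\sigma-s}{n}$ together with $\theta\in[s/\sigma,1]$ shows that $p\geq\min\{p_0,p_1\}$ always, so $p$ can undershoot at most one of $p_0,p_1$; in that case $\tfrac{1}{p}$ lies between $\tfrac{1}{p_0}$ and $\tfrac{1}{p_1}$, and plain H\"older interpolation on each block, $\|\Delta_j u\|_{L^p}\leq \|\Delta_j u\|_{L^{p_0}}^{1-\alpha}\|\Delta_j u\|_{L^{p_1}}^{\alpha}$, combined with $\|\Delta_j u\|_{L^{p_1}}\lesssim 2^{-j\sigma}\big\||D|^\sigma u\big\|_{L^{p_1}}$, restores two-sided geometric decay without any maximal operator (your Hedberg route also works, but it natively produces only the convexity-line case $\theta=s/\sigma$). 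With these adjustments, and using $\dot{B}^s_{p,1}\hookrightarrow \dot{H}^s_p$ to pass from the $\ell^1$ dyadic sum to the Riesz-potential norm, your plan closes into a correct and self-contained proof of a statement the paper simply imports.
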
 

\begin{proposition}[Fractional Leibniz rule] \label{FractionalLeibniz}
Let us assume $s>0$, $1\leq r \leq \infty$ and $1<p_1,\, p_2,\, q_1,\, q_2 \le \infty$ satisfying the relation
\[ \frac{1}{r}=\frac{1}{p_1}+\frac{1}{p_2}=\frac{1}{q_1}+\frac{1}{q_2}.\]
Then, it holds for any $u\in \dot{H}^s_{p_1} \cap L^{q_1}$ and $v\in \dot{H}^s_{q_2} \cap L^{p_2}$
$$\big\||D|^s(u \,v)\big\|_{L^r}\lesssim \big\||D|^s u\big\|_{L^{p_1}}\, \|v\|_{L^{p_2}}+\|u\|_{L^{q_1}}\, \big\||D|^s v\big\|_{L^{q_2}}. $$    
\end{proposition}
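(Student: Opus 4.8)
The plan is to prove this Kato--Ponce type inequality by the Littlewood--Paley calculus combined with Bony's paraproduct decomposition, which is the standard route for fractional Leibniz rules. First I would fix a dyadic partition of unity and let $\Delta_j$ denote the Littlewood--Paley projector onto the frequency shell $|\xi|\sim 2^j$, with $S_j=\sum_{k\le j}\Delta_k$ the corresponding low-frequency truncation. Bony's decomposition then splits the product into two paraproducts and a resonant remainder,
$$ uv=\sum_j S_{j-2}u\,\Delta_j v+\sum_j S_{j-2}v\,\Delta_j u+\sum_{|j-k|\le 1}\Delta_j u\,\Delta_k v, $$
so that it suffices to estimate $\||D|^s(\cdot)\|_{L^r}$ of each of the three sums by the right-hand side of the claimed inequality.

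First I would treat the two paraproducts, which are the easy pieces. Each summand $S_{j-2}u\,\Delta_j v$ has Fourier support in an annulus of size $\sim 2^j$, so on it the operator $|D|^s$ behaves like multiplication by $2^{js}$ up to a bounded Fourier multiplier; this lets me replace $\||D|^s(S_{j-2}u\,\Delta_j v)\|$ by $2^{js}\|S_{j-2}u\,\Delta_j v\|$. Splitting with H\"older along $\tfrac1r=\tfrac1{q_1}+\tfrac1{q_2}$, controlling $S_{j-2}u$ by the Hardy--Littlewood maximal function of $u$, and reassembling by means of the Littlewood--Paley square-function characterisation of $\dot H^s_{q_2}$ together with the Fefferman--Stein vector-valued maximal inequality, I obtain the bound $\|u\|_{L^{q_1}}\,\||D|^s v\|_{L^{q_2}}$. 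The second paraproduct is controlled symmetrically, exchanging the roles of $u$ and $v$ and the exponent pairs, to give $\||D|^s u\|_{L^{p_1}}\,\|v\|_{L^{p_2}}$.

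The delicate term is the resonant sum $\sum_{|j-k|\le1}\Delta_j u\,\Delta_k v$, where the output frequency need not be comparable to $2^j$ and may be arbitrarily small, so the localisation exploited above breaks down and one cannot simply read off a factor $2^{js}$. Here I would fix the output scale by applying $\Delta_\ell$ to the remainder, observe that only blocks with $j\gtrsim\ell$ contribute, and write the cost $2^{\ell s}$ of $|D|^s$ as $2^{(\ell-j)s}2^{js}$ to transfer the derivative onto $\Delta_j u$ while producing the gain $2^{(\ell-j)s}$ with $\ell-j$ bounded above. After H\"older and the square-function estimate this leaves a dyadic sum of the shape $\sum_{\ell}\sum_{j\gtrsim\ell}2^{(\ell-j)s}(\cdots)$, whose summability rests precisely on the hypothesis $s>0$; summing the geometric factor and invoking Fefferman--Stein once more closes the estimate.

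I expect this resonant interaction to be the main obstacle, for two intertwined reasons. The high--high piece is exactly where careless bookkeeping loses the derivative, and it is the convergence of the associated dyadic series that forces $s>0$ rather than $s\ge0$. Moreover, the statement admits the endpoints $r\ge1$ and values up to $\infty$ among $p_1,p_2,q_1,q_2$, a regime in which the Fefferman--Stein maximal inequality is no longer available; at those endpoints I would replace the square-function step by direct $L^1$ or $L^\infty$ kernel bounds on the frequency-localised blocks, or appeal to the sharp endpoint refinements of the Kato--Ponce inequality, and this endpoint analysis is the most technical part of the argument.
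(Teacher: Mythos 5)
The paper never proves this proposition: it sits in Appendix A, explicitly titled ``The results are known'', as a quotation of the standard fractional Leibniz (Kato--Ponce) inequality, with no argument given and no internal machinery to compare against. Judged on its own terms, your outline follows the canonical route from the literature: Bony's paraproduct decomposition, the observation that on each paraproduct block the output frequency is localised in an annulus of size $\sim 2^j$ so that $|D|^s$ costs $2^{js}$ up to a bounded multiplier, pointwise domination of $S_{j-2}u$ by the Hardy--Littlewood maximal function, and the geometric gain $2^{(\ell-j)s}$ (summable precisely because $s>0$) for the resonant high--high interaction. This skeleton is correct and is essentially the proof one finds in Grafakos--Oh and related references.

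There is, however, a genuine gap if your text is to serve as a proof of the statement \emph{as written}, and you only half-acknowledge it. The hypotheses allow $r=1$, $r=\infty$, and $p_i,q_i=\infty$, while your two structural tools --- the Littlewood--Paley square-function characterisation of $L^r$ and the Fefferman--Stein vector-valued maximal inequality --- are available only for $1<r<\infty$; at $r=1$ the square function characterises the Hardy space $H^1$ rather than $L^1$, and at $r=\infty$ it fails outright. So the core of your argument covers only the interior range, and for the remaining cases your stated plan is to ``appeal to the sharp endpoint refinements of the Kato--Ponce inequality''. Those refinements (Bourgain--Li, Grafakos--Maldonado--Naibo) \emph{are} the endpoint cases of the very proposition being proved, so this step is citation rather than proof, and the $L^\infty$ endpoint in particular is a hard theorem that the square-function machinery cannot reach. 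Since the paper itself treats the whole proposition as a black box, outsourcing the endpoints to the literature is arguably in the same spirit --- but then the honest formulation is to cite the Kato--Ponce literature for the full statement, rather than to present an argument whose hardest cases are quietly delegated to it.
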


\begin{proposition}[Fractional powers] \label{FractionalPowers}
Let $p>1$, $1< r <\infty$ and $u \in H^{s}_r$, where $s \in \big(\frac{n}{r},p\big)$. Let us denote by $F(u)$ one of the functions $|u|^p,\, \pm |u|^{p-1}u$. Then, the following estimates hold:
$$\|F(u)\|_{H^{s}_r}\lesssim \|u\|_{H^{s}_r}\,\, \|u\|_{L^\infty}^{p-1} \quad \text{ and }\quad \| F(u)\|_{\dot{H}^{s}_r}\lesssim \|u\|_{\dot{H}^{s}_r}\,\, \|u\|_{L^\infty}^{p-1}. $$
\end{proposition}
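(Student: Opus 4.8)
The plan is to prove both inequalities at once, reducing the inhomogeneous bound to the homogeneous one and then reducing the homogeneous bound to a combination of the fractional chain rule, the fractional Leibniz rule (Proposition \ref{FractionalLeibniz}) and the fractional Gagliardo--Nirenberg inequality (Proposition \ref{FractionalG-N}). First I would record the two structural consequences of the hypotheses. The assumption $s>\frac{n}{r}$ gives, via Sobolev embedding, $H^s_r\hookrightarrow L^\infty$, so that $\|u\|_{L^\infty}$ is finite and controlled by $\|u\|_{H^s_r}$; the assumption $s<p$ guarantees that $F$ is of class $C^{\lceil s\rceil}$ with the pointwise bounds $|F^{(k)}(u)|\lesssim |u|^{p-k}$ for $0\le k\le\lceil s\rceil$, which is the fact driving every estimate below. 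For the inhomogeneous norm I would write $\|F(u)\|_{H^s_r}\sim \|F(u)\|_{L^r}+\|F(u)\|_{\dot H^s_r}$ and dispose of the low-order piece immediately: $\|F(u)\|_{L^r}=\|u\|_{L^{pr}}^p\le \|u\|_{L^\infty}^{p-1}\|u\|_{L^r}\le \|u\|_{L^\infty}^{p-1}\|u\|_{H^s_r}$. Thus everything reduces to proving the homogeneous estimate $\||D|^s F(u)\|_{L^r}\lesssim \|u\|_{L^\infty}^{p-1}\||D|^s u\|_{L^r}$, and the signed variant $\pm|u|^{p-1}u$ is handled identically since it satisfies the same derivative bounds.

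For the homogeneous estimate I would first dispatch the range $s\in(0,1)$ through the fractional chain rule (of Christ--Weinstein type), $\||D|^s F(u)\|_{L^r}\lesssim \|F'(u)\|_{L^\infty}\,\||D|^s u\|_{L^r}$, and then replace $\|F'(u)\|_{L^\infty}$ by $\|u\|_{L^\infty}^{p-1}$ using $|F'(u)|\lesssim|u|^{p-1}$. For $s\ge 1$ I would set $m=\lceil s\rceil-1$ and use the equivalence $\||D|^s F(u)\|_{L^r}\sim\sum_{|\alpha|=m}\||D|^{s-m}\partial^\alpha F(u)\|_{L^r}$, valid for $1<r<\infty$ by the $L^r$-boundedness of the Riesz transforms (Mikhlin multiplier theorem), with $s-m\in(0,1]$. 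Expanding $\partial^\alpha F(u)$ by the Fa\`{a} di Bruno formula yields a finite sum of terms of the form $F^{(k)}(u)\prod_{i=1}^k\partial^{\beta_i}u$ with $1\le k\le m$ and $\sum_i|\beta_i|=m$. To each such product I would apply the fractional Leibniz rule (Proposition \ref{FractionalLeibniz}) to move $|D|^{s-m}$ onto a single factor at a time, and then interpolate every derivative factor by Proposition \ref{FractionalG-N}, using the embedding $H^s_r\hookrightarrow L^\infty$ to treat the $L^\infty$ endpoint, while bounding the remaining undifferentiated factors directly in $L^\infty$.

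The arithmetic that makes the estimate close is a term-by-term bookkeeping that I would carry out explicitly. Each product carries exactly $p-k$ powers coming from $|F^{(k)}(u)|\lesssim|u|^{p-k}$, together with $k$ factors built from derivatives of $u$ whose orders, augmented by the extra $s-m$ from the fractional Leibniz step, sum to exactly $s$. Choosing the H\"older exponents so that the intermediate norms are estimated by Gagliardo--Nirenberg with interpolation parameters $\theta_i\in[|\beta_i|/s,1]$ summing to $1$, one full factor $\||D|^s u\|_{L^r}$ is produced while the leftover $k-1$ copies of $u$ land in $L^\infty$; combined with the $p-k$ copies supplied by $F^{(k)}$ this gives exactly $\|u\|_{L^\infty}^{(p-k)+(k-1)}\||D|^s u\|_{L^r}=\|u\|_{L^\infty}^{p-1}\||D|^s u\|_{L^r}$, uniformly over the finitely many terms. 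Summing over $\alpha$ and over the Fa\`{a} di Bruno terms then closes the homogeneous bound.

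The main obstacle is twofold. First, one must verify that the exponents $p_i\in(1,\infty)$ and the interpolation parameters required by Proposition \ref{FractionalG-N} can be chosen simultaneously and compatibly with the H\"older relation $\sum_i \frac{1}{p_i}=\frac{1}{r}$ in every Fa\`{a} di Bruno term; this is precisely where the hypotheses $1<r<\infty$ and $\frac{n}{r}<s<p$ are genuinely used, and checking feasibility in the mixed case $1\le k<m$ is the delicate algebraic point. Second, when $p$ is not an even integer the function $|u|^p$ has only limited smoothness at $u=0$, so I would regularize by $F_\varepsilon(u)=(u^2+\varepsilon^2)^{p/2}$ (and its signed analogue), prove the estimate for $F_\varepsilon$ with constants independent of $\varepsilon$ using $|F_\varepsilon^{(k)}(u)|\lesssim(|u|+\varepsilon)^{p-k}$, and then pass to the limit $\varepsilon\to0^+$. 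Justifying this limit in $\dot H^s_r$, rather than merely pointwise, is the most technical step and the one I expect to require the most care.
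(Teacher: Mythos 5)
The paper itself offers no proof of this proposition: it appears in Appendix A under the heading ``The results are known,'' i.e.\ it is quoted from the literature (it is the sharp composition estimate of Runst--Sickel type, whose standard proof runs through a Littlewood--Paley/telescoping decomposition of $F(u)$ rather than Fa\`a di Bruno). So the relevant question is whether your argument actually establishes the statement in its stated generality $s\in\big(\frac{n}{r},p\big)$, and there it has a genuine gap. Your opening claim that ``$s<p$ guarantees that $F$ is of class $C^{\lceil s\rceil}$ with $|F^{(k)}(u)|\lesssim|u|^{p-k}$ for $0\le k\le\lceil s\rceil$'' is false whenever $p$ is not an integer and $\lfloor s\rfloor<p<\lceil s\rceil$, which is perfectly compatible with the hypothesis $s<p$: take $s=2.6$, $p=2.8$, so $\lceil s\rceil=3$ but $|u|^{2.8}$ has third derivative of size $|u|^{-0.2}$, unbounded at $u=0$. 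In exactly this regime your scheme breaks at the step where, after reducing to $m=\lceil s\rceil-1$ classical derivatives, the fractional Leibniz rule puts $|D|^{s-m}$ onto the factor $F^{(m)}(u)$ and the Christ--Weinstein chain rule then demands $F^{(m+1)}=F^{(\lceil s\rceil)}\in L^\infty$ on the range of $u$ --- which it is not. Your proposed rescue by regularization does not repair this: the bound $|F_\varepsilon^{(k)}(u)|\lesssim(|u|+\varepsilon)^{p-k}$ is \emph{not} uniform in $\varepsilon$ when $k>p$, since near $u=0$ it is of size $\varepsilon^{p-k}\to\infty$, so the constants in the estimate for $F_\varepsilon$ blow up precisely in the problematic top-order term and the limit $\varepsilon\to0^+$ cannot be taken as claimed.

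To close this case one needs an ingredient your proposal does not contain: either a fractional chain rule for merely H\"older-continuous functions, applied to $F^{(m)}\in \dot{C}^{0,\min\{1,p-m\}}$ and exploiting that $s-m<p-m$ (this is the one place the hypothesis $s<p$ is genuinely sharp, and it is invisible in your bookkeeping, where $s<p$ only enters through $k\le m<p$), or the standard paraproduct/telescoping proof \`a la Runst--Sickel, in which $F(u)$ is expanded as $\sum_j\big(F(S_{j+1}u)-F(S_ju)\big)$ and the H\"older regularity of $F^{(\lfloor p\rfloor)}$ controls the remainder, with convergence of the sum in $H^s_r$ being exactly equivalent to $s<p$. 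In the complementary subrange $p\ge\lceil s\rceil$ your Fa\`a di Bruno--Leibniz--Gagliardo--Nirenberg outline is sound (modulo the exponent-feasibility check you yourself flag but do not carry out), and it is in fact the pattern the paper uses in Section 3, where the fractional chain rule is always invoked under the stronger integer-type condition $p>1+\lceil s-\sigma+\sigma_2\rceil$; but as a proof of Proposition \ref{FractionalPowers} as stated, the argument is incomplete.
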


\begin{proposition}[A fractional Sobolev embedding] \label{Embedding}
Let $0< s_1< \frac{n}{2}< s_2$. Then, for any function $u \in \dot{H}^{s_1} \cap \dot{H}^{s_2}$ we have
$$ \|u\|_{L^\ity} \lesssim \|u\|_{\dot{H}^{s_1}}+ \|u\|_{\dot{H}^{s_2}}. $$
\end{proposition}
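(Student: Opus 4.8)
The plan is to control $\|u\|_{L^\infty}$ through the $L^1$ norm of the Fourier transform. Since the Fourier inversion formula yields $\|u\|_{L^\infty} \lesssim \|\widehat{u}\|_{L^1}$ whenever $\widehat{u} \in L^1$, it suffices to bound $\int_{\mathbb{R}^n} |\widehat{u}(\xi)|\,d\xi$ by the right-hand side of the claimed inequality; the membership $\widehat{u}\in L^1$ will itself be a byproduct of that bound, after which $u$ admits a continuous (hence bounded) representative, so the estimate is meaningful.

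First I would split the frequency domain into low and high frequencies, $\{|\xi| \le 1\}$ and $\{|\xi| > 1\}$, inserting on each piece the corresponding homogeneous weight and applying the Cauchy--Schwarz inequality. On the low-frequency region,
$$\int_{|\xi|\le 1}|\widehat u(\xi)|\,d\xi \le \left(\int_{|\xi|\le 1}|\xi|^{-2s_1}\,d\xi\right)^{1/2}\left(\int_{|\xi|\le 1}|\xi|^{2s_1}|\widehat u(\xi)|^2\,d\xi\right)^{1/2},$$
where the first factor is finite precisely because $2s_1 < n$ (passing to polar coordinates) and the second is bounded by $\|u\|_{\dot{H}^{s_1}}$ via Plancherel. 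Symmetrically, on the high-frequency region I would use the weight $|\xi|^{-s_2}$, giving
$$\int_{|\xi|> 1}|\widehat u(\xi)|\,d\xi \le \left(\int_{|\xi|> 1}|\xi|^{-2s_2}\,d\xi\right)^{1/2}\left(\int_{|\xi|> 1}|\xi|^{2s_2}|\widehat u(\xi)|^2\,d\xi\right)^{1/2},$$
in which the first factor converges exactly because $2s_2 > n$, while the second is controlled by $\|u\|_{\dot{H}^{s_2}}$. Summing the two contributions then yields $\|\widehat u\|_{L^1} \lesssim \|u\|_{\dot{H}^{s_1}} + \|u\|_{\dot{H}^{s_2}}$, whence the embedding.

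The only genuinely delicate point is the convergence of the two weight integrals, and this is exactly where the hypothesis $s_1 < \tfrac{n}{2} < s_2$ enters: the weight $|\xi|^{-2s_1}$ must be integrable at the origin (forcing $2s_1 < n$) while $|\xi|^{-2s_2}$ must be integrable at infinity (forcing $2s_2 > n$). The borderline values $s_1 = \tfrac{n}{2}$ or $s_2 = \tfrac{n}{2}$ destroy integrability, consistent with the well-known failure of the embedding $\dot{H}^{n/2} \hookrightarrow L^\infty$; thus the strict inequalities are essential rather than merely convenient.
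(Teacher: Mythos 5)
Your proof is correct, and it is the standard argument for this embedding. Note that the paper itself does not prove this proposition at all: it appears in Appendix A, ``The results are known,'' as a quoted classical fact, so there is no in-paper proof to compare against. Your route --- bounding $\|u\|_{L^\infty}$ by $\|\widehat{u}\|_{L^1}$ via Fourier inversion, splitting into $\{|\xi|\le 1\}$ and $\{|\xi|>1\}$, and applying Cauchy--Schwarz with the weights $|\xi|^{\mp s_1}$, $|\xi|^{\mp s_2}$ --- is exactly the canonical proof, and your identification of where the strict inequalities $2s_1<n<2s_2$ enter (integrability of $|\xi|^{-2s_1}$ at the origin and of $|\xi|^{-2s_2}$ at infinity) is the heart of the matter. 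Two minor points worth keeping in mind: the conclusion $\|u\|_{L^\infty}\lesssim\|\widehat u\|_{L^1}$ presupposes that $u$ is recovered from $\widehat u$ by the inversion formula, which you correctly handle by noting that the $L^1$ bound on $\widehat u$ gives $u$ a continuous bounded representative; and for $s_2>\frac{n}{2}$ the space $\dot H^{s_2}$ is usually only defined modulo polynomials, but since the hypothesis places $u$ in the intersection $\dot H^{s_1}\cap\dot H^{s_2}$ (so that $\widehat u$ is a genuine locally integrable function), this causes no difficulty here.
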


\section{Modified Bessel functions}
\begin{proposition} \label{FourierModifiedBesselfunctions}
Let $f \in L^p(\R^n)$, $p\in [1,2]$, be a radial function. Then, the Fourier transform $F(f)$ is also a radial function and it satisfies
$$ F(f)(\xi)= c \int_0^\ity g(r) r^{n-1} \tilde{\mathcal{J}}_{\frac{n}{2}-1}(r|\xi|)dr,\quad g(|x|):= f(x), $$
where $\tilde{\mathcal{J}}_\mu(s):=\frac{\mathcal{J}_\mu(s)}{s^\mu}$ is called the modified Bessel function with the Bessel function $\mathcal{J}_\mu(s)$ and a non-negative integer $\mu$.
\end{proposition}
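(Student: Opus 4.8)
The plan is to split the proof into a symmetry part and a computational part. First I would record that $F(f)$ is radial, which is immediate from the equivariance of the Fourier transform under the orthogonal group: for every $R\in O(n)$ one has $F(f\circ R)(\xi)=F(f)(R\xi)$, because $R$ preserves Lebesgue measure and the bilinear pairing $x\cdot\xi$. Since $f$ is radial we have $f\circ R=f$, so $F(f)(R\xi)=F(f)(\xi)$ for all $R\in O(n)$, i.e. $F(f)$ depends on $\xi$ only through $|\xi|$.

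For the explicit formula I would first treat the absolutely convergent case $f\in L^1\cap L^p$, for which $F(f)(\xi)=\int_{\R^n}f(x)e^{-ix\cdot\xi}\,dx$. Writing $f(x)=g(|x|)$ and passing to polar coordinates $x=r\omega$ with $r>0$ and $\omega\in S^{n-1}$ gives
$$
F(f)(\xi)=\int_0^{\ity}g(r)\,r^{n-1}\Big(\int_{S^{n-1}}e^{-ir\,\omega\cdot\xi}\,d\omega\Big)\,dr .
$$
By the rotational invariance already used, the inner spherical integral is a function of $\rho:=r|\xi|$ alone. Rotating so that $\xi$ points along the first axis and slicing $S^{n-1}$ by the level sets $\{\omega_1=t\}$, whose induced $(n-2)$-dimensional measure is proportional to $(1-t^2)^{(n-3)/2}\,dt$, reduces it to
$$
\int_{S^{n-1}}e^{-i\rho\,\omega_1}\,d\omega=\big|S^{n-2}\big|\int_{-1}^{1}e^{-i\rho t}(1-t^2)^{\frac{n-3}{2}}\,dt .
$$
This one-dimensional integral is exactly the Poisson representation of the Bessel function of order $\frac{n}{2}-1$: it equals $c\,\rho^{-(\frac{n}{2}-1)}\mathcal{J}_{\frac{n}{2}-1}(\rho)=c\,\tilde{\mathcal{J}}_{\frac{n}{2}-1}(\rho)$ for a dimensional constant $c$. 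Substituting $\rho=r|\xi|$ yields the claimed formula for $f\in L^1\cap L^p$. The case $n=1$ is handled directly, the sphere $S^0$ being two points and $\tilde{\mathcal{J}}_{-1/2}$ reducing to a cosine.

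Finally I would remove the integrability restriction by a density argument. For $f\in L^p$ with $p\in(1,2]$ the Fourier transform is defined as the Hausdorff--Young extension of the transform on $L^1\cap L^p$, mapping into $L^{p'}$ with $\frac1p+\frac1{p'}=1$. Approximating $f$ by its radial truncations $f_k:=f\,\mathbf{1}_{\{|x|\le k\}}\in L^1\cap L^p$, the transforms $F(f_k)$ converge to $F(f)$ in $L^{p'}$, hence along a subsequence almost everywhere, while the Hankel-type integrals on the right converge to the stated integral. The main obstacle is precisely this last convergence: for merely $L^p$ radial data the integral $\int_0^{\ity}g(r)r^{n-1}\tilde{\mathcal{J}}_{\frac{n}{2}-1}(r|\xi|)\,dr$ is in general only conditionally convergent, so justifying the passage to the limit relies on the decay bound $|\tilde{\mathcal{J}}_{\frac{n}{2}-1}(s)|\lesssim s^{-(n-1)/2}$ for large $s$ together with careful control of the oscillatory tail. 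By contrast, the radiality and the polar-coordinate reduction are routine.
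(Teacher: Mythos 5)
The paper never proves this proposition: it appears in the appendix of quoted background facts, being the classical Fourier--Hankel representation of radial functions (as in Stein--Weiss), so there is no in-paper argument to compare against. Your proposal supplies exactly the standard textbook proof, and it is essentially correct: radiality of $F(f)$ via $O(n)$-equivariance; polar coordinates; reduction of the spherical integral to $|S^{n-2}|\int_{-1}^{1}e^{-i\rho t}(1-t^2)^{\frac{n-3}{2}}\,dt$; and the Poisson integral representation of $\mathcal{J}_{\frac{n}{2}-1}$, which is valid for order $\nu=\frac{n}{2}-1>-\frac{1}{2}$, i.e.\ $n\ge 2$, with $n=1$ correctly handled separately via $\tilde{\mathcal{J}}_{-1/2}(s)=\sqrt{2/\pi}\cos s$, consistent with item (3) of the paper's Proposition \ref{PropertiesModifiedBesselfunctions}.

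The one place you stop short is the passage from $L^1\cap L^p$ to $L^p$: you rightly note that for $p$ near $2$ the Hankel integral is in general only conditionally convergent (by H\"older it fails to converge absolutely once $p\ge \frac{2n}{n+1}$), but you then defer to ``careful control of the oscillatory tail'' without carrying it out. Pointwise a.e.\ convergence of the truncated integrals is genuinely delicate and is not needed: the clean interpretation of the stated formula for $p\in(1,2]$ is as the limit $\lim_{k\to\infty}\int_0^k$ taken in $L^{p'}$, and with that reading your argument closes at once, since your computation gives $F(f_k)(\xi)=c\int_0^k g(r)r^{n-1}\tilde{\mathcal{J}}_{\frac{n}{2}-1}(r|\xi|)\,dr$ exactly for the radial truncations $f_k$, and Hausdorff--Young continuity yields $F(f_k)\to F(f)$ in $L^{p'}$. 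For this paper the refinement is in any case moot: the proposition is only ever applied to kernels such as $e^{\lambda_1(|\xi|)t}|\xi|^{s}\chi_{\rm H}(|\xi|)$, which are in $L^1$, where your absolutely convergent computation already suffices.
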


\begin{proposition}\label{PropertiesModifiedBesselfunctions}
The following properties of the modified Bessel function hold:
\begin{enumerate}
\item $s\,d_s\tilde{\mathcal{J}}_\mu(s)= \tilde{\mathcal{J}}_{\mu-1}(s)-2\mu \tilde{\mathcal{J}}_\mu(s)$,
\item $d_s\tilde{\mathcal{J}}_\mu(s)= -s\tilde{\mathcal{J}}_{\mu+1}(s)$,
\item $\tilde{\mathcal{J}}_{-\frac{1}{2}}(s)= \sqrt{\f{2}{\pi}}\cos s$ and $\tilde{\mathcal{J}}_{\frac{1}{2}}(s)= \sqrt{\f{2}{\pi}} \f{\sin s}{s}$,
\item $|\tilde{\mathcal{J}}_\mu(s)| \le Ce^{\pi|\fontshape{n}\selectfont\text{Im}\mu|} \text{ if } s \le 1, $ \\
and $\tilde{\mathcal{J}}_\mu(s)= Cs^{-\frac{1}{2}}\cos \big( s-\frac{\mu}{2}\pi- \frac{\pi}{4} \big) +\mathcal{O}(|s|^{-\frac{3}{2}}) \text{ if } |s|\ge 1$,
\item $\tilde{\mathcal{J}}_{\mu+1}(r|x|)= -\frac{1}{r|x|^2}\partial_r \tilde{\mathcal{J}}_\mu(r|x|)$, $r \ne 0$, $x \ne 0$.
\end{enumerate}
\end{proposition}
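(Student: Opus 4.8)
The plan is to derive all five identities from the classical theory of Bessel functions, starting only from the definition $\tilde{\mathcal{J}}_\mu(s)=\mathcal{J}_\mu(s)/s^\mu$ and the two standard contiguity (derivative) relations
\[
\frac{d}{ds}\big(s^\mu \mathcal{J}_\mu(s)\big)=s^\mu \mathcal{J}_{\mu-1}(s),
\qquad
\frac{d}{ds}\big(s^{-\mu}\mathcal{J}_\mu(s)\big)=-s^{-\mu}\mathcal{J}_{\mu+1}(s),
\]
which in turn follow term-by-term from the power series $\mathcal{J}_\mu(s)=\sum_{k\ge 0}\frac{(-1)^k}{k!\,\Gamma(\mu+k+1)}(s/2)^{2k+\mu}$. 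Everything then reduces to elementary algebra for four of the five identities, with a single classical analytic input needed for the large-argument part of (4).

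First I would treat (2) and (5) together, since (5) is only a rescaling of (2). Writing $s^{-\mu}\mathcal{J}_\mu(s)=\tilde{\mathcal{J}}_\mu(s)$ and $s^{-\mu}\mathcal{J}_{\mu+1}(s)=s\,\tilde{\mathcal{J}}_{\mu+1}(s)$, the second contiguity relation immediately gives $d_s\tilde{\mathcal{J}}_\mu(s)=-s\,\tilde{\mathcal{J}}_{\mu+1}(s)$, which is (2). Substituting $s=r|x|$ and applying the chain rule, $\partial_r\tilde{\mathcal{J}}_\mu(r|x|)=|x|\,\tilde{\mathcal{J}}_\mu'(r|x|)=-r|x|^2\,\tilde{\mathcal{J}}_{\mu+1}(r|x|)$, and solving for $\tilde{\mathcal{J}}_{\mu+1}$ produces (5). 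For (1) I would rewrite the first contiguity relation as $\frac{d}{ds}\big(s^{2\mu}\tilde{\mathcal{J}}_\mu(s)\big)=s^{2\mu-1}\tilde{\mathcal{J}}_{\mu-1}(s)$; expanding the left-hand side by the product rule and dividing by $s^{2\mu-1}$ yields $2\mu\,\tilde{\mathcal{J}}_\mu(s)+s\,d_s\tilde{\mathcal{J}}_\mu(s)=\tilde{\mathcal{J}}_{\mu-1}(s)$, which is exactly (1).

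Next, (3) follows by inserting the classical closed forms $\mathcal{J}_{1/2}(s)=\sqrt{2/(\pi s)}\,\sin s$ and $\mathcal{J}_{-1/2}(s)=\sqrt{2/(\pi s)}\,\cos s$ into the definition, giving $\tilde{\mathcal{J}}_{1/2}(s)=s^{-1/2}\mathcal{J}_{1/2}(s)=\sqrt{2/\pi}\,(\sin s)/s$ and $\tilde{\mathcal{J}}_{-1/2}(s)=s^{1/2}\mathcal{J}_{-1/2}(s)=\sqrt{2/\pi}\,\cos s$. For the first part of (4) I would use the series once more: dividing $\mathcal{J}_\mu(s)$ by $s^\mu$ cancels the overall factor $s^\mu$ and leaves an even function analytic in $s^2$, hence uniformly bounded on $|s|\le 1$, the exponential factor $e^{\pi|\mathrm{Im}\,\mu|}$ accounting for the case of complex order. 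The large-argument part of (4) is then read off from the classical asymptotic expansion of $\mathcal{J}_\mu$.

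I expect the only genuinely non-elementary ingredient to be the large-argument asymptotics in (4). In contrast to (1)--(3) and (5), which are algebraic consequences of the contiguity relations and the power series, the oscillatory decay of the form $s^{-1/2}\cos(s-\mu\pi/2-\pi/4)$ cannot be extracted from recurrences alone; it must be established either via a Poisson--Hankel integral representation of $\mathcal{J}_\mu$ together with a stationary-phase estimate, or simply imported from the standard Bessel-function literature. All remaining steps are routine once the two contiguity relations are recorded.
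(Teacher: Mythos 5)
The paper itself contains no proof of this proposition: it sits in the appendix ``The results are known'' and is imported wholesale from the classical Bessel-function literature (the same statement appears in the cited Narazaki--Reissig and Dao--Reissig papers), so there is no internal argument to compare yours against. Your derivation is correct and is exactly the standard one that the paper implicitly relies on: items (1), (2) and (5) follow algebraically from the two contiguity relations $\frac{d}{ds}\big(s^{\mu}\mathcal{J}_\mu(s)\big)=s^{\mu}\mathcal{J}_{\mu-1}(s)$ and $\frac{d}{ds}\big(s^{-\mu}\mathcal{J}_\mu(s)\big)=-s^{-\mu}\mathcal{J}_{\mu+1}(s)$, your rewriting of the first as $\frac{d}{ds}\big(s^{2\mu}\tilde{\mathcal{J}}_\mu(s)\big)=s^{2\mu-1}\tilde{\mathcal{J}}_{\mu-1}(s)$ and the chain-rule rescaling $s=r|x|$ for (5) are both sound; (3) is the correct specialization of the half-integer closed forms; and you rightly identify the large-argument asymptotics in (4) as the only non-elementary input, to be obtained from the Poisson--Hankel representation with stationary phase or simply cited.

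Two small points deserve care. First, in (4): reading off the behavior of $\tilde{\mathcal{J}}_\mu(s)=s^{-\mu}\mathcal{J}_\mu(s)$ from the classical expansion $\mathcal{J}_\mu(s)=\sqrt{2/(\pi s)}\,\cos\big(s-\frac{\mu}{2}\pi-\frac{\pi}{4}\big)+\mathcal{O}(s^{-3/2})$ gives a main term $Cs^{-\mu-\frac{1}{2}}\cos(\cdot)$ with remainder $\mathcal{O}(|s|^{-\mu-\frac{3}{2}})$, not the $Cs^{-\frac{1}{2}}$ printed in the proposition; as stated, the second half of (4) is the asymptotic of $\mathcal{J}_\mu$ itself (a slip propagated from the cited sources, harmless in this paper since the proofs only ever invoke $|\mathcal{J}_0(s)|\lesssim s^{-1/2}$, where $\mu=0$ and the two versions coincide). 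If your proof is to match the printed statement you should either note the extra decay factor $s^{-\mu}$ or restate (4) for $\mathcal{J}_\mu$. Second, for the small-argument bound with complex order, ``the exponential factor accounts for the case of complex order'' should be backed by the standard estimate on $1/|\Gamma(\mu+k+1)|$ along the series (or a citation), since that is the only place where the constant genuinely depends on $\mu$; for the non-negative integer orders actually used in the paper this subtlety disappears.
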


\begin{lemma}\label{lemmaB.1}
    We have the following inequality:
    \begin{align*}
        \mathcal{J}_0(x) \geq 1- x^{\alpha} \text{ for all } x \in [0,1],
    \end{align*}
    where $\alpha$ is a suitable constant in $(0,1)$.
\end{lemma}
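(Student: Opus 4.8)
The plan is to reduce the assertion $\mathcal{J}_0(x)\ge 1-x^\alpha$, equivalently $1-\mathcal{J}_0(x)\le x^\alpha$, to the elementary quadratic majorant $1-\mathcal{J}_0(x)\le x^2/4$ on $[0,1]$; once this is in hand, the comparison with $x^\alpha$ becomes immediate for any $\alpha\in(0,1)$. First I would invoke the classical integral representation of the Bessel function of order zero,
\[
\mathcal{J}_0(x)=\frac{1}{\pi}\int_0^\pi \cos\big(x\sin\theta\big)\,d\theta,
\]
so that
\[
1-\mathcal{J}_0(x)=\frac{1}{\pi}\int_0^\pi \big(1-\cos(x\sin\theta)\big)\,d\theta.
\]

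Next I would apply the elementary inequality $1-\cos t\le t^2/2$, valid for all real $t$, together with $\int_0^\pi \sin^2\theta\,d\theta=\pi/2$, to obtain
\[
0\le 1-\mathcal{J}_0(x)\le \frac{1}{\pi}\int_0^\pi \frac{x^2\sin^2\theta}{2}\,d\theta=\frac{x^2}{4},
\]
where the lower bound comes from $\cos\le 1$. Since $\alpha\in(0,1)$ and $x\in[0,1]$, the monotonicity of $t\mapsto x^{t}$ for fixed $x\le 1$ gives $x^{\alpha}\ge x^{2}\ge x^{2}/4$, and combining this with the previous display yields $1-\mathcal{J}_0(x)\le x^{\alpha}$, i.e. the claimed inequality; in fact the same argument works for every $\alpha\le 2$. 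As a cross-check one may instead proceed from the power series $\mathcal{J}_0(x)=\sum_{k\ge 0}\frac{(-1)^k}{(k!)^2}(x/2)^{2k}$: for $x\in[0,1]$ the tail $1-\mathcal{J}_0(x)=\sum_{k\ge 1}(-1)^{k+1}(k!)^{-2}(x/2)^{2k}$ is an alternating series whose terms decrease strictly (the ratio of consecutive terms is $(x/2)^2/(k+1)^2\le 1/4$), so Leibniz's criterion again gives $1-\mathcal{J}_0(x)\le (x/2)^2=x^2/4$.

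I do not anticipate any serious obstacle here: the leading behaviour $1-\mathcal{J}_0(x)\sim x^2/4$ near the origin means that any exponent $\alpha<2$ leaves ample room on the compact interval $[0,1]$. The only point requiring a little care is to produce a quadratic majorant with an honest constant, which the integral representation supplies cleanly as exactly $x^2/4$; everything else is the routine comparison $x^2\le x^\alpha$ on $[0,1]$.
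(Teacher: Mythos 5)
Your proof is correct, and it takes a genuinely different route from the paper's. The paper argues by monotonicity: it considers $g(x)=x^{\alpha}+\mathcal{J}_0(x)-1$ on $[0,1]$, computes $g'(x)=\alpha x^{\alpha-1}-\mathcal{J}_1(x)$ from the rule $\mathcal{J}_0'=-\mathcal{J}_1$, estimates $\mathcal{J}_1(x)=\frac{1}{\pi}\int_0^{\pi}\cos(\tau-x\sin\tau)\,d\tau\le\frac{1}{\pi}\int_0^{\pi}\cos(\tau-\sin\tau)\,d\tau$ for $x\in[0,1]$, and then fixes the particular exponent $\alpha=\frac{1}{\pi}\int_0^{\pi}\cos(\tau-\sin\tau)\,d\tau=\mathcal{J}_1(1)$, so that $g'(x)\ge\alpha\left(x^{\alpha-1}-1\right)\ge 0$ and hence $g(x)\ge g(0)=0$. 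You instead prove the quadratic majorization $1-\mathcal{J}_0(x)\le x^2/4$ directly from the integral representation of $\mathcal{J}_0$ together with $1-\cos t\le t^2/2$ (your power-series/Leibniz cross-check gives the same bound), and conclude with the elementary comparison $1-\mathcal{J}_0(x)\le x^2/4\le x^2\le x^{\alpha}$ on $[0,1]$. Both proofs are complete, and both suffice for the only place the lemma is invoked: the proof of Lemma \ref{Lemma2.1} needs just one admissible exponent $a\in(0,1)$ with $1-\mathcal{J}_0(\nu)\le\nu^{a}$ on $[0,1]$. Your approach is more elementary (no differentiation, and it avoids the pointwise justification that the paper's integrand bound tacitly requires, namely that cosine decreases on $[0,\pi]$ and $0\le\tau-\sin\tau\le\tau-x\sin\tau\le\pi$), and it yields a strictly stronger conclusion: the inequality holds for every $\alpha\in(0,1)$, indeed for every $\alpha\le 2$, with the sharp quadratic rate at the origin. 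What the paper's route buys in exchange is only an explicit closed-form admissible constant, $\alpha=\mathcal{J}_1(1)$.
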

\begin{proof}
    We consider the function $g(x) = x^{\alpha}+\mathcal{J}_0(x)-1$ with $x \in [0,1]$. Using the second rule with $\mu = 0$ of proposition \ref{PropertiesModifiedBesselfunctions} we obtain
    \begin{align*}
        g'(x) &= \alpha x^{\alpha-1}-\mathcal{J}_1(x)\\
        &= \alpha x^{\alpha-1} -\frac{1}{\pi}\int_0^{\pi}\cos(\tau-x\sin\tau) d\tau\\
        &\geq \alpha x^{\alpha-1} -\frac{1}{\pi}\int_0^{\pi}\cos(\tau-\sin\tau) d\tau.
    \end{align*}
    Now we choose $\alpha = \frac{1}{\pi}\displaystyle\int_0^{\pi}\cos(\tau-\sin\tau) d\tau$ implies $g'(x) \geq 0$ for all $x \in [0,1]$. For this reason, we conclude $g(x) \geq g(0) = 0$ for all $x \in [0,1]$. The proof of the lemma \ref{lemmaB.1} has been completed.
\end{proof}
\section*{Acknowledgments}
This research was partly supported by Vietnam Ministry of Education and Training under grant number B2023-BKA-06.

%=================================================================================={References}

\end{document}